\newtheorem{theorem}{Theorem}[section]
\newtheorem{lemma}[theorem]{Lemma}
\newtheorem{corollary}[theorem]{Corollary}
\newtheorem{prop}[theorem]{Proposition}
\newtheorem{cor}[theorem]{Corollary}
\newcommand{\wT}{\widetilde{T}}
\newcommand{\wG}{\widetilde{G}}
\newcommand{\tl}{\tilde{\lambda}}
\newcommand{\bT}{\bar{T}}
\numberwithin{equation}{section}
\newcommand{\R}{{\mathbb{R}}}
\newcommand{\Rm}{{\mathbb{R}}}
\newcommand{\bD}{{\mathbb{D}}}
\newcommand{\Pm}{{\mathbb{P}}}
\newcommand{\E}{{\mathbb{E}}}
\newcommand{\N}{{\mathbb{N}}}
\newcommand{\bE}{{\mathbf{E}}}
\newcommand{\1}{{\mathbf{1}}}
\newcommand{\eps}{\varepsilon}
\newcommand{\farc}{\frac}
\newcommand{\wK}{\widetilde K}
\newcommand{\hCI}{\widehat{\mathcal{C}}_I}
\newcommand{\hBI}{\widehat{\mathcal{B}}_I}
\begin{document}

\title{The speed of a random front for stochastic reaction-diffusion
equations with strong noise}
\author{Carl Mueller\thanks{Department of Mathematics, University of Rochester,
Rochester, NY, USA  14627;  carl.e.mueller@rochester.edu}
\and Leonid Mytnik\thanks{Faculty of Industrial Engineering and Management,
Technion, Technion City, Haifa 3200003, Israel; leonid@ie.technion.ac.il} 
\and Lenya Ryzhik\thanks{Department of Mathematics,
Stanford University,
Stanford CA 94305, 
USA; ryzhik@math.stanford.edu}}

\maketitle

\begin{abstract}
We study the asymptotic speed of a random front for solutions $u_t(x)$ to
stochastic reaction-diffusion equations of the form 
\[
\partial_tu=\farc{1}{2}\partial_x^2u+f(u)+\sigma\sqrt{u(1-u)}\dot{W}(t,x),~t\ge 0,~x\in\Rm,
\]
arising in population genetics.  Here, $f$ is a  continuous function with 
$f(0)=f(1)=0$, and such that~$|f(u)|\le K|u(1-u)|^\gamma$
with~$\gamma\ge 1/2$,  and $\dot{W}(t,x)$ is a space-time Gaussian white 
noise.  We assume that the initial condition $u_0(x)$ satisfies 
$0\le u_0(x)\le 1$ for all $x\in\Rm$, $u_0(x)=1$ for~$x<L_0$ and 
$ u_0(x)=0$ for~$x>R_0$.   We show that when $\sigma>0$, for each $t>0$ 
there exist~$R(u_t)<+\infty$ and~$L(u_t)<-\infty$ such that $u_t(x)=0$ for 
$x>R(u_t)$ and $u_t(x)=1$ for~$x<L(u_t)$ even if $f$ is not Lipschitz. We 
also show that for all $\sigma>0$ 
there exists a finite deterministic speed~$V(\sigma)\in\Rm$ so 
that~$R(u_t)/t\to V(\sigma)$ as $t\to+\infty$, almost surely. This is in 
dramatic contrast
with the deterministic case $\sigma=0$ for nonlinearities of the type
$f(u)=u^m(1-u)$ with $0<m<1$ when solutions converge to $1$
uniformly on $\Rm$ as $t\to+\infty$.  Finally, we 
prove that when $\gamma>1/2$ there exists $c_f\in\Rm$, so that~$\sigma^2V(\sigma)\to c_f$ 
as~$\sigma\to+\infty$ and give a characterization of $c_f$.  
The last result complements a lower bound obtained by Conlon and Doering 
\cite{cd05} for the special case of $f(u)=u(1-u)$ where a duality argument
is available.
\end{abstract}

\section{Introduction}
\label{sec:introduction}

Reaction-diffusion equations of the form
\begin{equation}\label{feb1120}
\partial_tu=\frac{1}{2}\partial_x^2u+f(u),
\end{equation}
with $f(0)=f(1)=0$, are often used to model biological invasions and other
spreading phenomena, with one steady state, say, $u\equiv 1$ invading another,
$u\equiv 0$, or vice versa. Under very mild assumptions on $f(u)$, such as, for
instance, that $f(u)$ is Lipschitz on $[0,1]$ and either $f(u)>0$ for $u\in(0,1)$,
or there exists $\theta\in(0,1)$ so that $f(u)\le 0$ for $u\in(0,\theta)$ and $f(u)>0$
for $u\in(\theta,1)$, 
such equations admit traveling wave solutions of the form $u_t(x)=U(x-ct)$ such that
\begin{equation}\label{feb1122}
-cU'=\farc{1}{2}U''+f(U),~U(-\infty)=1,~U(+\infty)=0.
\end{equation}
Note that, in the probabilistic spirit of the present paper, the subscript $t$
denotes the time dependence of the function $u_t(x)$ rather than a time derivative,
common to the PDE literature. 
It is easy to see that
\begin{equation}\label{feb1124}
c\int_{\R}|U'(x)|^2dx=\int_0^1f(z)dz,
\end{equation}
thus $c$ has the same sign as
\begin{equation}\label{feb1126}
I[f]:=\int_0^1f(u)du,
\end{equation}
so that if $I[f]>0$ then the steady state $u\equiv 1$ is more stable, and invades
the "less stable" steady state~$u\equiv 0$, and if $I[f]<0$ then the opposite happens,
while if $I[f]=0$ then (\ref{feb1120}) has a time-independent solution. It is also
well-known that traveling wave solutions to (\ref{feb1120}) determine the spreading
speed for the solutions of the Cauchy problem. More precisely, let $u_t(x)$ be the solution
to~(\ref{feb1120}) with an initial condition $u_0(x)$ such that $0\le u_0(x)\le 1$
for all $x\in\R$, and there exist~$L_0\le R_0$ so that $u_0(x)=1$ for $x<L_0$  
and $u_0(x)=0$ for $x> R_0$. There exists a function $m(t)$ such that
\begin{equation}\label{feb1302}
|m(t)-c_*t|=o(t)\hbox{ as $t\to+\infty$},
\end{equation}
so that
\begin{equation}\label{feb1304}
|u_t(x+m(t))-U_{c_*}(x)|=o(1)\hbox{ as $t\to+\infty$}.
\end{equation}
Here, depending on the nature of the nonlinearity $f(u)$, the spreading speed
$c_*$ may be either the speed of the unique traveling wave, or the minimal
speed of a traveling wave if traveling waves are not unique. The latter happens
for the class of the Fisher-KPP nonlinearities, such that $f$ is Lipschitz,
$f(0)=f(1)=0$, $f(u)>0$ for all $u\in(0,1)$, and $f(u)\le f'(0)u$ for all $u\in[0,1]$. 
In that case, we have 
\begin{equation}\label{feb1130}
c_*=\sqrt{2f'(0)}.
\end{equation}
Much more precise results than (\ref{feb1302})-(\ref{feb1304})  
on the convergence of the solutions to the Cauchy problem
to traveling waves are available,
and we refer to the classical papers~\cite{Ar-W,Bramson1,Bramson2} for the basic results,
and to~\cite{NRR,Roberts} and references therein for more recent developments.
We also point out the relation
\begin{equation}\label{feb1308}
c_*=\lim_{t\to+\infty}\int_\R f(u_t(x))dx=\int_\R f(U_{c_*}(x))dx,
\end{equation}
that can be obtained simply by integrating (\ref{feb1120}) and (\ref{feb1122}) in space.

Note that if $f'(0)$ blows up, then the speed of propagation may also tend to infinity, as can be seen
from (\ref{feb1130}). For H\"older nonlinearities such that $f(u)\sim u^p$ with $p\in(0,1)$, it was shown in~\cite{AE}
that solutions become instantaneously strictly positive everywhere:
$u(t,x)\ge ct^{1/(1-p)}$ for~$t\ll 1$. In particular, if we approximate such nonlinearity by a sequence of Lipschitz
nonlinearities $f_n$, then the corresponding spreading speeds $c_*^{(n)}$ blow up as $n\to+\infty$. 

\subsubsection*{Reaction-diffusion equations with noise}

The physical and biological systems modeled by reaction-diffusion equations
are often subject to 
noise. 
In this paper, we study solutions $u_t(x)$,  
to the stochastic reaction-diffusion equations of the form
\begin{equation}
\label{eq:RFKPP-general}
\partial_tu=\farc12\partial_x^2u+f(u)+\sigma\sqrt{u(1-u)}\dot{W}(t,x)
\end{equation}
where $\dot{W}(t,x)$ is a space-time Gaussian white noise,
and $\sigma>0$ measures its strength. Our interest is in the 
effect of the noise term on the spreading speed.   Since traveling waves will 
no longer maintain a fixed shape due to the noise, we will refer instead to
the speed of the random front, which is defined below.

Let us give an motivation for the noise term in \eqref{eq:RFKPP-general}
similar to that given by Fisher in his pioneering work \cite{fis37}.  See 
also \cite{Shiga-rev}.  Imagine that two populations, type A and type B, move in 
a Brownian way along ${\R}$,  
and let $u_t(x)$ is the proportion of the population of type A at time $t$ at position $x$. 
When an individual of type A meets an individual of type
B, it can be converted into
type B, and vice versa, and the outcome is partially random.
The function $f(u)$ in (\ref{eq:RFKPP-general})
describes the deterministic evolution of the population of type~A,
due to these interactions,
and it is natural to assume that $f(0)=f(1)=0$ since there are no interactions when
one type is absent.
The random term in \eqref{eq:RFKPP-general} accounts for the
stochastic aspect of the interactions.  We assume that 
for each such meeting we have a mean-zero random variable affecting the outcome, and  
these random variables are i.i.d.  By the central limit theorem, the 
sum of such variables would be approximately Gaussian.  The 
independence of the variables means that the random input should be  
independent for different values of $t$ and $x$, giving rise to the
space-time noise $\dot{W}(t,x)$.  The rate of such meetings at a 
given site $x$ and time $t$ would be proportional to~$u_t(x)(1-u_t(x))$, which  
is  the variance of the noise at~$(t,x)$.  Thus we should multiply the white 
noise $\dot{W}(t,x)$ by the standard deviation $\sqrt{u_t(1-u_t)}$.  This leads 
to the noise term in \eqref{eq:RFKPP-general}.  

As we have mentioned, we are interested in the long time speed of a 
random front for the solutions to~(\ref{eq:RFKPP-general}). To this end, we 
define the left and the right edge 
of the solution as follows.  Given a function~$h(x)$ such that
$0\le h(x)\le 1$ for all $x\in\R$, with $h(x)\to 1$ as $x\to -\infty$ and
$h(x)\to 0$ as~$x\to+\infty$, we set
\begin{align}
\label{eq:def-front}
L(h)&=\inf\left\{x\in{\R}: h(x)<1\right\}  \\
R(h)&=\sup\left\{x\in{\R}: h(x)>0\right\}.  \nonumber
\end{align}
In the absence of the noise, when $\sigma=0$, and for Lipschitz nonlinearities 
$f(u)$, we have $L(u_t)=-\infty$ and $R(u_t)=+\infty$ for all $t>0$. This, however,
is not necessarily the case in the presence of the noise. 
In order to make this claim precise, we assume that  
\begin{equation}\label{feb1202}
\text{$f$ is continuous 
on $[0,1]$ and there exists $K_f>0$ such that 
$f(u)\leq K_f \sqrt{|u(1-u)|}.  $}
\end{equation}
As for the initial condition $u_0(x)$, we will assume that 
\begin{equation}\label{feb1206}
0\le u_0(x)\le 1\hbox{ for all $x\in\Rm$, and both $L(u_0)$ and $R(u_0)$ are finite.}
\end{equation}
We will denote by $\mathcal{C}_I$ the set of continuous functions satisfying (\ref{feb1206}).
 In addition $\hBI$  will denote the space of functions on $\R$ taking values in $[0,1]$ and 
 $\hCI$ will denote the space of continuous  functions on $\R$ taking values in $[0,1]$.

We say that $u_t$ has a speed $V(\sigma)$ if the following limit exists:
\[
V(\sigma)=\lim_{t\to\infty}\frac{R(u_t)}{t}. 
\]
We prove the following theorem in Section~\ref{sec:defs}. 
\begin{theorem}\label{thm-feb2existence}
Let $f(u)$ satisfy (\ref{feb1202}) and $u_0(x)$ be as in (\ref{feb1206}),
then (\ref{eq:RFKPP-general}) with an initial
condition~$u_0(x)$ has a solution $u_t(x)$  taking values in  $\hCI$ for $t>0$. 
The solution is unique in law. Moreover,~$L(u_t)$ and $R(u_t)$ are almost 
surely finite for all $t\ge 0$ and the
 solution has a speed $V(\sigma)\in\R$.  
\end{theorem}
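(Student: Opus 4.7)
The theorem bundles four claims: existence of an $\hCI$-valued solution, weak uniqueness, almost-sure finiteness of both edges $L(u_t)$ and $R(u_t)$, and existence of the deterministic speed $V(\sigma)$. My plan is to handle them in this order.

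\textbf{Existence and weak uniqueness.} I would approximate $f$ by a sequence of Lipschitz functions $f_n$ on $[0,1]$ with $f_n(0)=f_n(1)=0$ that preserve the bound \eqref{feb1202}. For each Lipschitz $f_n$, a weak solution $u^{(n)}$ taking values in $\hCI$ exists by the classical theory of stepping-stone SPDEs (Shiga, Mytnik--Perkins), and pathwise uniqueness holds there. Uniform second-moment estimates (using the boundedness of $\sqrt{u(1-u)}$ and of each $f_n$ on $[0,1]$) give tightness of $\{u^{(n)}\}$ in $C([0,T],\hCI)$, and any subsequential weak limit solves \eqref{eq:RFKPP-general}. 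For uniqueness in law of the limiting equation, I would carry out a Girsanov change of measure on each interval $[0,T]$: the bound $|f(u)|/\sqrt{u(1-u)}\le K_f$ from \eqref{feb1202} secures Novikov's condition, and removing the drift reduces the problem to the driftless stepping-stone SPDE, for which weak uniqueness follows from Shiga's moment duality with coalescing Brownian motions.

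\textbf{Finiteness of $L(u_t)$ and $R(u_t)$.} The same Girsanov reduction lets me reduce to the driftless equation $\partial_t\tilde u=\tfrac12\partial_x^2\tilde u+\sigma\sqrt{\tilde u(1-\tilde u)}\dot W$. Near the two levels $\tilde u=0$ and $\tilde u=1$ the noise coefficient degenerates exactly like the super-Brownian-motion noise, and an Iscoe/Mueller--Perkins-style compact-support argument (comparison with a super-Brownian-motion-type process) yields $R(\tilde u_t)<\infty$ a.s. The same argument applied to $1-\tilde u$ gives $L(\tilde u_t)>-\infty$ a.s.

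\textbf{Existence of the speed.} With weak uniqueness in hand (and thus the strong Markov property in $\hCI$) and pathwise comparison (inherited from the Lipschitz case by passing to the limit), I would apply a subadditive ergodic argument. It is convenient to reduce to the Heaviside initial condition $u_0=\1_{x<0}$ by sandwiching the general $u_0$ between $\1_{x<L_0}$ and $\1_{x<R_0}$ and using comparison. Writing $W^{(s)}(r,x):=W(s+r,x)-W(s,x)$ for the time-shifted noise and letting $\tilde u^{(s)}$ be the Heaviside-started solution driven by $W^{(s)}$, the Markov property and pathwise comparison give
\begin{equation*}
R(u_{s+t})\le R(u_s)+R(\tilde u^{(s)}_t),
\end{equation*}
with $R(\tilde u^{(s)}_t)$ independent of $\mathcal F_s$ and equal in law to $R(u_t)$. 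Hence $\phi(t):=\E R(u_t)$ is subadditive and $\phi(t)/t\to V(\sigma)\in[-\infty,\infty)$ by Fekete's lemma. Liggett's extension of Kingman's subadditive ergodic theorem, applied to the stationary array $X_{m,n}:=R(\tilde u^{(m)}_{n-m})$, upgrades this to $R(u_t)/t\to V(\sigma)$ almost surely. The lower bound $V(\sigma)>-\infty$ comes from the symmetric argument applied to $1-u_t$ (which solves the SPDE with drift $-f(1-\cdot)$ and the same noise coefficient), giving $L(u_t)/t\to V'\in(-\infty,\infty]$ a.s., combined with the trivial bound $L(u_t)\le R(u_t)$.

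\textbf{Main obstacle.} The most delicate point is securing a finite lower bound $V(\sigma)>-\infty$, together with the pathwise comparison theorem and the strong Markov property in $\hCI$ in the non-Lipschitz regime where only weak uniqueness is available. Comparison alone gives a one-sided subadditive structure and hence only an upper bound on $V(\sigma)$; ruling out $V(\sigma)=-\infty$ requires combining the symmetry $u\leftrightarrow 1-u$ with the compact-support property and a careful passage to the limit from the Lipschitz approximations. Once these ingredients are in place, the remaining subadditive-ergodic argument and compact-support reduction are routine.
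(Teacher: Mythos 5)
Your outline of existence (Lipschitz approximation plus tightness), of the subadditive-ergodic argument for the speed, and of the symmetry $u\leftrightarrow 1-u(-\cdot)$ for the lower bound all match the paper. But there is a genuine gap at your very first use of Girsanov, and it propagates. The pointwise bound $f(u)^2/(u(1-u))\le K_f^2$ from \eqref{feb1202} does \emph{not} secure Novikov's condition: the relevant quantity is
\[
\int_0^t\int_\R \frac{f(u_s(x))^2}{u_s(x)(1-u_s(x))}\,dx\,ds,
\]
and the spatial integral runs over all of $\R$. The integrand is bounded by $K_f^2\,\1_{\{0<u_s(x)<1\}}$, so finiteness is equivalent to the interface $\{x:0<u_s(x)<1\}\subset[L(u_s),R(u_s)]$ having finite length for a.e.\ $s$ --- which is exactly the compact-support property you only establish afterwards. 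Worse, your compact-support step begins with ``the same Girsanov reduction'' to the driftless equation, so the argument is circular: you need a finite interface to justify Girsanov, and you invoke Girsanov to prove the interface is finite.

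The paper breaks this circle with a spatial cutoff. It introduces $v^b$ solving \eqref{eq:RFKPP-general_a}, in which the drift is switched off on $[-10b,10b]$; Girsanov between $u$ and $v^b$ is then unconditionally valid because the exponent only involves the drift on that bounded strip, where the integral is at most $20bK_f^2t$ as in \eqref{18_2_15}. The compact-support bound for $v^b$ (Lemma~\ref{lem:18_2_2}) is proved directly --- not by reduction to the driftless equation --- via Tribe's log-Laplace PDE $-\partial_s\phi^\lambda=\tfrac12\Delta\phi^\lambda-\tfrac14(\phi^\lambda)^2+\lambda\psi_b$ together with moment and modulus estimates on the stochastic convolution (Lemmas~\ref{lem:feb19_1} and~\ref{lem:tr_3.1}), the faraway drift being controlled by $\|f\|_\infty$ and Gaussian tails. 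Only after transferring this bound back to $u$ (Lemma~\ref{lem:18_2_3}) does the full-line condition \eqref{feb1230} hold, and only then does uniqueness in law follow; the exponential tail bound \eqref{feb1224f} also supplies the integrability $\E[(R(u_1)-R(u_0))_+]<\infty$ that your Fekete/Liggett step silently assumes. You would need to supply some version of this cutoff, or an independent Girsanov-free proof of the compact interface for $u$ itself --- which is delicate, since near $u=0$ the drift is only bounded by $K_f\sqrt{u}$ rather than by $Cu$, so the standard Iscoe/Mueller--Perkins comparison with a super-Brownian-type process does not apply off the shelf.
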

We see that the noise has a very strong slowdown effect: $V(\sigma)$ is finite for all $\sigma>0$ 
even if $f(u)$ is H\"older with an exponent $m\ge 1/2$, and not Lipschitz, 
such as, for instance $f(u)=u^m(1-u)$, for which, as we have mentioned, the speed of the front 
is infinite when $\sigma=0$.  

Most of the papers dealing with \eqref{eq:RFKPP-general}, such as 
Mueller and Sowers~\cite{ms95} have treated the Fisher-KPP nonlinearity
$f(u)=u(1-u)$, and small noise, where 
$\sigma$ is close to 0.  Mueller, Mytnik, and Quastel \cite{mmq11} studied 
the behavior of $V(\sigma)$ as $\sigma\downarrow0$ and verified 
some conjectures of Brunet and Derrida \cite{bd97} and 
\cite{bd00}.  Less attention has been devoted to $V(\sigma)$ for large or intermediate
values of $\sigma$, but Conlon and Doering \cite{cd05} proved 
that for $f(u)=u(1-u)$ there exists an asymptotic velocity $V(\sigma)>0$ for solutions $u$ to 
\eqref{eq:RFKPP-general}  for all $\sigma>0$, 
and that 
\begin{equation}\label{feb12condoe}
\liminf_{\sigma\to\infty}\sigma^2V(\sigma)\geq 1.
\end{equation}
Note that (\ref{feb12condoe}) differs from (1.7) in~\cite{cd05}
because the diffusivity in that paper is taken to be $1$ rather than $1/2$ as 
chosen here.  
To formulate our main result, we 
note that a  rescaling of \eqref{eq:RFKPP-general}, discussed in 
Section~\ref{sec:defs} allows us to move the noise coefficient into the 
nonlinearity, and obtain the rescaled equation
\begin{equation}
\label{eq:spde-v_f}
\partial_tv=\frac{1}{2}\partial_x^2v+\sigma^{-4}f(v)+\sqrt{v(1-v)}\dot{W}(t,x).
\end{equation}
Here $v$ is a rescaling of $u$ which we specify later.  
Later we will use  
the results of Tribe \cite{tri95}, and Mueller and Tribe \cite{mt97} for 
(\ref{eq:spde-v_f}) with $f=0$, a version of a continuous voter model, or a stepping stone model in population genetics:
\begin{equation}
\label{eq:spde-w}
\partial_tw=\frac{1}{2}\partial_x^2w+\sqrt{w(1-w)}\dot{W}(t,x).
\end{equation}
By Theorem 1 of \cite{mt97}, we know that 
$w_t(x-R(w_t))$ converges weakly to a stationary distribution as $t\rightarrow\infty$. 
We denote the 
expectation with respect to the stationary distribution of $w$ by $\E_{w,st}$, where   
"st" is an abbreviation for "stationary". For the next theorem we need an assumption on $f$ which is  slightly stronger than 
\eqref{feb1202}: we assume
\begin{equation}\label{feb1202a}
\text{$f$ is continuous 
on $[0,1]$ and there exists $\wK_f>0$ s.t. 
$f(u)\leq \wK_f |u(1-u)|^\gamma  $ for some $\gamma\in (1/2, 1]$.}
\end{equation}

\begin{theorem}
Suppose that $u_0$ satisfies (\ref{feb1206}) and $f$ satisfies  
(\ref{feb1202a}). Then we have, almost surely,
\label{th:2}
\begin{equation}\label{feb1310}
\lim_{\sigma\to\infty}\sigma^2V(\sigma)=c_f,
\end{equation}
where 
\begin{equation}
\label{eq:def-c-sub-f}
c_{f} \equiv  \E_{w,st}\left[\int_{{\R}}f(w(x))\,dx\right]
\end{equation}
and
\begin{equation}
\label{eq:def-c-sub-f2}
|c_f|<\infty.  
\end{equation}
\end{theorem}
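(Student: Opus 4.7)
My plan is to pass to the rescaled variable $v$ in \eqref{eq:spde-v_f}, whose front speed is $V_v(\sigma):=V(\sigma)/\sigma^2$; the statement \eqref{feb1310} is then equivalent to $\sigma^4 V_v(\sigma)\to c_f$ as $\sigma\to\infty$. The heuristic is that as $\sigma\to\infty$ the drift $\sigma^{-4}f(v)$ becomes a small perturbation of the noise-only equation \eqref{eq:spde-w}, whose front-centred solution $w_t(\cdot-R(w_t))$ has a stationary law $\E_{w,st}$ by Mueller--Tribe \cite{mt97}. Under that limit law the mean rate at which the drift $\sigma^{-4}f$ displaces the front is exactly $\sigma^{-4}c_f$.

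The first step is a mass--displacement identity. For $h\in\hBI$ with finite edges set
\[
\Phi(h)\;=\;\int_0^\infty h(x)\,dx\;-\;\int_{-\infty}^0\bigl(1-h(x)\bigr)\,dx.
\]
A direct computation shows that $\Phi(h)=R(h)-\int_{L(h)}^{R(h)}(1-h(x))\,dx$, so $|\Phi(h)-R(h)|\le R(h)-L(h)$. Testing \eqref{eq:spde-v_f} against a sequence of cutoffs approximating the constant function $1$ kills the Laplacian contribution (because $v_t\equiv 1$ for $x<L(v_t)$ and $v_t\equiv 0$ for $x>R(v_t)$, so $\partial_xv_t$ is effectively supported on the finite transition region) and yields
\begin{equation}\label{eq:plan-mass}
\Phi(v_t)\;=\;\Phi(v_0)\;+\;\sigma^{-4}\int_0^t\!\!\int_\R f(v_s(x))\,dx\,ds\;+\;M_t,
\end{equation}
where $M_t$ is a continuous martingale with $\langle M\rangle_t=\int_0^t\!\int v_s(1-v_s)\,dx\,ds$. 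Since $v_s(1-v_s)\le 1/4$ is supported on $[L(v_s),R(v_s)]$, moment bounds on the front widths deliver $\langle M\rangle_t=O(t)$ and hence $M_t/t\to 0$ almost surely.

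Dividing \eqref{eq:plan-mass} by $t$ and sending $t\to\infty$, the left-hand side converges a.s.\ to $V_v(\sigma)$ by Theorem~\ref{thm-feb2existence} together with $|\Phi(v_t)-R(v_t)|\le R(v_t)-L(v_t)$, so
\[
\sigma^2V(\sigma)\;=\;\sigma^4V_v(\sigma)\;=\;\lim_{t\to\infty}\frac{1}{t}\int_0^t\!\!\int_\R f(v^\sigma_s(x))\,dx\,ds\qquad\text{a.s.}
\]
To send $\sigma\to\infty$, I would couple $v^\sigma$ with $w$ driven by the same white noise. Since the additional drift is $O(\sigma^{-4})$ and acts only on the transition region (whose length is tight in $t$ by Theorem~\ref{thm-feb2existence}), a weak-norm comparison estimate yields $v^\sigma\to w$ in law on each compact time interval, and hence the front-centred invariant law of $v^\sigma$ converges weakly to the Mueller--Tribe stationary law of $w$. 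The observable $h\mapsto\int f(h)\,dx$ is bounded in absolute value by $\wK_f(1/4)^{\gamma}(R(h)-L(h))$ thanks to \eqref{feb1202a}, so uniform moment bounds on $R-L$ supply the uniform integrability required to conclude $\sigma^2V(\sigma)\to \E_{w,st}\!\left[\int f(w)\,dx\right]=c_f$. The same bound, applied under $\E_{w,st}$ together with finiteness of $\E_{w,st}[R(w)-L(w)]$ from \cite{mt97}, gives $|c_f|<\infty$.

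The main obstacle lies in the last step. Because the noise coefficient $\sqrt{v(1-v)}$ is only $1/2$-H\"older, a pathwise Gronwall comparison between $v^\sigma$ and $w$ in a strong norm is unavailable, so one must work with weak-norm comparisons or duality arguments to establish both $v^\sigma\Rightarrow w$ and the uniform moment bounds on the widths $R(v^\sigma_t)-L(v^\sigma_t)$ under the invariant law. A closely related task is proving existence and uniqueness of a front-centred invariant law for \eqref{eq:spde-v_f} at each $\sigma>0$, which amounts to extending the ergodic theory of \cite{tri95,mt97} to a small nonlinear perturbation and is where most of the technical work will be concentrated.
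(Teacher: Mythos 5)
Your route is genuinely different from the paper's: you aim to derive the exact identity $\sigma^2V(\sigma)=\lim_{t\to\infty}t^{-1}\int_0^t\int_\R f(v_s)\,dx\,ds$ from a mass--displacement formula and then send $\sigma\to\infty$ by convergence of the front-centred invariant law of $v^\sigma$ to the Mueller--Tribe stationary law. The paper never constructs an invariant measure for $v^\sigma$ at all: it works on time blocks of length $T_0\sigma^8$, uses Girsanov's theorem to transfer the functional CLT for $(L^a,R^a,M^a,M^{f,a},A^{f,a})$ under the law of $w$ (Corollary~\ref{cor2}) to the law of $v$, and extracts the drift $c_f$ from the correlation $\langle B_\cdot,B^f_\cdot\rangle_t=c_ft$ after the change of measure, closing the argument with a renewal/comparison construction and the law of large numbers. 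That design is precisely what lets the authors avoid the ergodic theory for the perturbed equation that your plan requires.

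There are two genuine gaps. First, a concrete error: your proof of uniform integrability and of $|c_f|<\infty$ rests on the bound $|\int_\R f(h)\,dx|\le \wK_f 4^{-\gamma}(R(h)-L(h))$ together with ``finiteness of $\E_{w,st}[R(w)-L(w)]$ from \cite{mt97}.'' But Theorem~1 of \cite{mt97} (Theorem~\ref{th:mt-th1} here) states that $\E_{w,st}[(R-L)^p]$ is finite only for $0\le p<1$ and is \emph{infinite} for $p\ge 1$; in particular $\E_{w,st}[R(w)-L(w)]=\infty$, so this estimate proves nothing. This is exactly why the hypothesis $\gamma>1/2$ in \eqref{feb1202a} is needed and why the paper proves $|c_f|<\infty$ via Lemma~\ref{lem:feb19_2}, interpolating by H\"older and Young between $\E_{w,st}\bigl[\bigl(\int w(1-w)\,dx\bigr)^2\bigr]$ and a moment of the width of order strictly less than $1$. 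Second, the entire $\sigma\to\infty$ step of your plan --- existence and uniqueness of a front-centred invariant law for \eqref{eq:spde-v_f} at each $\sigma$, an ergodic theorem identifying your time average with an expectation under it, convergence of these laws to $\E_{w,st}$, and width-moment bounds uniform in $\sigma$ strong enough to pass $\int f$ to the limit (which, by the first point, cannot be first-moment bounds on $R-L$) --- is not carried out and is acknowledged by you as the bulk of the work. Since the $t\to\infty$ and $\sigma\to\infty$ limits must be interchanged, the proof is not complete without it; even the preliminary claims $(R(v_t)-L(v_t))/t\to0$ and $\langle M\rangle_t=O(t)$ a.s.\ already need stationarity or uniform-in-time width control for $v^\sigma$ that the paper's Corollary~\ref{cor-feb1208} (finiteness on compact time intervals) does not supply.
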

Note that Lemma~2.1 of \cite{tri95} shows that
\begin{equation}
\label{eq:1_5}
\lim_{t\to\infty}\E_{w}\left[\int_{{\R}}w_t(x)(1-w_t(x))\,dx\right]=1.
\end{equation}
This immediately implies  that $|c_f|<\infty$ for $f$ satisfying (\ref{feb1202a}) with $\gamma=1$. In particular, as a consequence of Theorem~\ref{th:2},
we get that for the Fisher-KPP nonlinearity $f(u)=u(1-u)$, we have
\[
\lim_{\sigma\to\infty}\sigma^2V(\sigma)=1,
\]
giving a matching upper bound to the lower bound (\ref{feb12condoe})
of Conlon and Doering in~\cite{cd05}, after adjusting for the different diffusivities adopted in the present paper and in~\cite{cd05}.
For the general~$f$ satisfying (\ref{feb1202a}), we  show that \eqref{eq:def-c-sub-f2} holds  in Lemma~\ref{lem:feb19_2}. 

We also see the slowdown due to strong noise in Theorem~\ref{th:2} even 
for Lipschitz nonlinearities. 
The large noise asymptotics in (\ref{feb1310})
corresponds to the speed of the front for solutions of (\ref{eq:spde-v_f}) 
that is $V^{(v)}(\sigma)\sim c_f/\sigma^4$. However, solutions
of the corresponding equation without the noise
\begin{equation}
\label{eq:spde-v_fbis}
\partial_tv=\frac{1}{2}\partial_x^2v+\sigma^{-4}f(v) 
\end{equation}
spread with the speed $\bar V(\sigma)=c_*/\sigma^2$, where $c_*$ is the speed of the traveling wave for (\ref{eq:spde-v_fbis}) with~$\sigma=1$,
so that $V^{(v)}(\sigma)\ll \bar V(\sigma)$ for $\sigma\gg 1$,
and the noise slows down the propagation.

Let us also point out that expression (\ref{feb1310})-(\ref{eq:def-c-sub-f}) 
for the front speed $V(\sigma)$ is a direct analog of~(\ref{feb1308}) except now
the role of the traveling wave is played by the invariant measure
of $w_t(x)$. One may conjecture that
instead of the convergence to a traveling wave 
in shape, as in~(\ref{feb1304}) that happens in the deterministic case, here, 
in the limit $\sigma\to+\infty$, the law of
$u_t(x)$ after rescaling converges, as~$t\to+\infty$, in the frame moving with the speed $V(\sigma)$,
to the invariant distribution of~$w_t(x)$. 

Another interesting observation  is that the noise, despite its symmetry with respect to $u=0$ and~$u=1$ 
can change the direction of the invasion. One may construct a nonlinearity $f$ such that~$I(f)$ given 
by~(\ref{feb1126})
has a different sign than $c_f$, meaning that that the speed of propagation~for~$\sigma=0$, in the absence of the noise,
may have a different sign than $V(\sigma)$ 
for large $\sigma\gg 1$, changing the direction
of the invasion, because of the noise.

The paper is organized as follows. The proof of Theorem~\ref{thm-feb2existence} is in Section~\ref{sec:defs}.
Section~\ref{sec:voter} contains some auxiliary results on solutions to (\ref{eq:spde-w}). They are used later in the proof of 
Theorem~\ref{th:2}, presented in Sections~\ref{sec:upper} for the upper bound, and in Section~\ref{sec:lower}
for the matching lower bound on the speed $V(\sigma)$ for $\sigma\gg 1$.

\textbf{Acknowledgement.} The work of LM and LR was supported by a US-Israel BSF grant.
LR was supported by NSF grant DMS-1613603 and ONR grant N00014-17-1-2145,
and CM was supported by a Simons Grant.  

\section{The proof of Theorem~\ref{thm-feb2existence}}\label{sec:defs}

In this section, we prove Theorem~\ref{thm-feb2existence}. Existence of a solution to (\ref{eq:RFKPP-general})
follows by a rather standard argument. To prove the uniqueness, we use Girsanov's theorem. In order to be able
to apply this theorem, we need to have an a priori bound showing that for any solutions to (\ref{eq:RFKPP-general})
 taking values in~$\hBI$ for all $t\ge 0$ with $R(u_0)<+\infty$, $L(u_0)>-\infty$, we have $-\infty<L(u_t)<R(u_t)<+\infty$
for all~$t\ge 0$, almost surely. 

\subsection{Existence of a solution}

We first show that (\ref{eq:RFKPP-general}) has a mild solution. 
The notion of a mild solution to (\ref{eq:RFKPP-general})
follows the  standard definition, see Walsh \cite{wal86}.  
We interpret (\ref{eq:RFKPP-general})  as a 
shorthand for the mild form,
\begin{align}
\label{eq:mild-form}
u_t(x)=&\int_{\R}G_t(x-y)u_0(y)dy+
\int_{0}^{t}\int_{{\R}}G_{t-s}(x-y)f(u_s(y))dyds  \nonumber\\
&+
\int_{0}^{t}\int_{{\R}}G_{t-s}(x-y)
 \sqrt{u_s(y)(1-u_s(y))}W(dyds) ,  
\end{align}
where $u_0(x)$ is the given initial condition.  Here, 
\[
G_t(x)=(2\pi t)^{-1/2}\exp\left(-x^2/(2t)\right),
\]
is the fundamental solution 
of the heat equation
\[
\partial_tG=\farc{1}{2}\partial_x^2G.
\]
In what follows, with some abuse of notation $\{G_t\,, t\geq 0\}$ will also denote the corresponding semigroup, that is, 
\begin{equation}
G_t\phi(x)=\int_\R G_t(x-y)\phi(y)\,,dy,\; t>0, 
\end{equation} 
for any function $\phi$ for which the above integral is well-defined.  

Almost sure existence and uniqueness of mild solutions to SPDEs
of the form 
\begin{equation}
\label{feb1220}
\partial_tu=\frac{1}{2}\partial_x^2u+f(u)+a(u)\dot{W}(t,x)
\end{equation}
is standard~\cite{wal86} when the 
coefficients are Lipschitz continuous functions of $u$.  Because  
in our case   $f(u)$ may be not Lipschitz, and $a(u)=\sqrt{u(1-u)}$  is  not Lipschitz, one needs to be slightly more 
careful. 
Solutions to~(\ref{eq:RFKPP-general}) are constructed 
as follows. Let the initial condition $u_0$ satisfy \eqref{feb1206}.  We approximate~$f(u)$ and $a(u)$ by   
Lipschitz functions $f_n(u)$ and $a_n(u)$ such that
\[
f_n(0)=f_n(1)=a_n(0)=a_n(1)=0,
\]
and construct the corresponding solutions $u^{n}_t(x)$
using the standard theory. The comparison principle implies that  
$u^n_t(x)$ take values in $[0,1]$, see \cite{shi94} and \cite{mue91s}.
The proof of Theorem~2.6 of~\cite{shi94}, on pp. 436-437 of that paper,
shows that the sequence $u^n_t(x)$ is tight. Passing to the limit~$n\to+\infty$
we obtain a mild solution $u_t(x)$ to (\ref{eq:RFKPP-general}) taking values in $[0,1]$.  
This proves existence of a solution.

\subsection{Uniqueness via the Girsanov theorem}
\label{sec:gir}

In order to prove uniqueness in law of the solution to (\ref{eq:RFKPP-general}),
we will use a version of the Girsanov theorem that will allow us to compare 
the laws of the solution $u_t(x)$ to~\eqref{eq:RFKPP-general} 
and $w_t(x)$, the solution to \eqref{eq:spde-w}, which corresponds to $f=0$ in
(\ref{eq:RFKPP-general}), 
with the same initial condition~$w_0(x)=u_0(x)$. Recall that we have set $\sigma=1$,
including in (\ref{eq:spde-w}). 
Let $\Pm_{t,u}$ be the measure induced on the canonical path space up to time $t$ by 
$u$, and $\Pm_{t,w}$ be the measure induced by $w$, also up to time $t$.  
We also define the corresponding expectations $\E_{t,u}$ and $\E_{t,w}$, and  
write $\Pm_u$ for $\Pm_{\infty,u}$, and likewise $\Pm_w$ for $\Pm_{\infty,w}$.  
We will not use the subscripts in the situations when it is clear which probability measure is used.

In~\cite{daw78}, Dawson gives a version of Girsanov's theorem which applies 
to $\Pm_{t,u}$ and $\Pm_{t,w}$. We will use its variant, Theorem IV.1.6 in~\cite{perkins}.
In such theorems, the change of measure always 
involves an exponential term which must be a martingale.  
In our situation, let
\begin{align}
\label{eq:Girsanov-u}
Z_t&=\int_{0}^{t}\int_{{\R}}
\frac{f(w_s(x))}{\sqrt{ w_s(x)(1-w_s(x))}}W(dx,ds)
  -\frac{1}{2} \int_{0}^{t}\int_{{\R}}\frac{f(w_s(x))^2}{w_s((x)1-w_s(x))}dxds.
\end{align}
Here, and elsewhere we adopt the convention in the integrands that
\[
\frac{f(u)}{\sqrt{u(1-u)}}=0\hbox{ if $u=0$ or $u=1$.}
\]
Then Girsanov's theorem for stochastic PDE \cite{daw78,perkins} says that 
\begin{equation}
\label{Girsanov1-feb12}
\frac{d\Pm_{t,u}}{d\Pm_{t,w}}=e^{Z_{t}},
\end{equation}
as long as
\begin{equation}\label{feb1230}
\int_{0}^{t}\int_{{\R}}\frac{f(u_s(x))^2}{u_s(x)(1-u_s(x))}dxds<+\infty,~~
\text{$\Pm_u$-almost surely.}
\end{equation}
In particular, if (\ref{feb1230}) holds
then (\ref{Girsanov1-feb12}) implies immediately that the solution to (\ref{eq:RFKPP-general})
is unique in law. For the moment, as we do not have any information on the support of $f(u_s(x))$, we can not conclude
that (\ref{feb1230}) holds. The bulk of the rest of this section is to show that (\ref{feb1230}) holds for any solution to~(\ref{eq:RFKPP-general})  taking values in $\hBI$ for all $t\ge 0$ and such that 
$R(u_0)<+\infty$ and~$L(u_0)>-\infty$. 

First,
we  make a much simpler observation that allows us to use Girsanov's theorem to eliminate the drift on a finite interval.
Fix and arbitrary $b>0$ and 
let $v^b$ denote a solution to a modified version of  (\ref{eq:RFKPP-general}), with the nonlinearity set to zero on the interval
$[-10b,10b]$:
\begin{equation}
\label{eq:RFKPP-general_a}
\partial_tv^b_t(x)=\farc12\partial_x^2v^b_t(x)+f(v^b_t(x)) \1_{\{x\in   (-\infty, -10b)\cup (10b,\infty)\}}+\sqrt{v^b_t(x)(1-v^b_t(x))}\dot{W}(t,x).
\end{equation}
We again write this equation in the mild form:
\begin{equation}
\label{eq:RFKPP-general_mild_a}
v^b_t(x)=G_tv^b_0(x)+\int_0^t\int_\R G_{t-s}(x-z)  f(v^b_s(z)) \1_{\{z\in  (-\infty, -10b)\cup (10b,\infty)\}}\,dz+N^b_t(x),
\end{equation}
where
\begin{equation}\label{feb2402}
N^b_t(x)=\int_0^t\int_\R G_{t-s}(x-z) \sqrt{v^b_s(z)(1-v^b_s(z))}{W}(ds,dz).
\end{equation}
Let $\Pm_{t,v^b}$ be the measure induced on the canonical path space up to time $t$ by 
$v^b$, with the corresponding expectation $\E_{t,v^b}$, and $\Pm_{\infty,v^b}$ be
$\Pm_{v^b}$. Note that by~\eqref{feb1202} we have
\begin{equation}\label{18_2_15}
\int_{0}^{t}\int_{{\R}}\frac{f(u_s(x))^2 \1_{\{x\in   (-10b,10b)\}}}{u_s(x)(1-u_s(x))}dxds\leq 20b K_f^2t 
<+\infty,~~
\text{$\Pm_u$-almost surely.}
\end{equation}
Thus we can use  Girsanov's theorem for stochastic PDE \cite{daw78,perkins} to get
\begin{equation}
\label{Girsanov1-feb12-b}
\frac{d\Pm_{t,u}}{d\Pm_{t,v^b}}=e^{Z^b_{t}},
\end{equation}
where
\begin{align}
\label{eq:Girsanov-u-b}
Z^b_t&=\int_{0}^{t}\int_{{\R}}
\frac{f(v^b_s(x)) \1_{\{x\in   (-10b,10b)\}}}{\sqrt{ v^b_s(x)(1-v^b_s(x))}}W(dx,ds)
  -\frac{1}{2} \int_{0}^{t}\int_{{\R}}\frac{f(v^b_s(x))^2 \1_{\{x\in   (-10b,10b)\}}}{v^b_s((x)1-v^b_s(x))}dxds.
\end{align}

\subsection{A bound on the front speed}

The next step is to get the following bound on the speed of the front of $u$.
\begin{lemma}
\label{lem:feb12-1}
Let $u_t(x)$ be a solution to (\ref{eq:RFKPP-general}) taking values in $\hBI$ for all $t\ge 0$ such that the initial
condition~$u_0(x)$ 
satisfies (\ref{feb1206}) with $R(u_0)\leq 0$.  
Then, for all $T> 0$, both $\sup_{t\leq T} R(u_t)$ and
$\sup_{t\leq T}L(u_t)$ are almost surely finite.
Moreover, for all $T\ge 0$
there exists $C_T>0$ so that for 
all~$b\ge 4\sqrt{T}(T\|f\|_\infty \vee 1)$ we have
\begin{equation}\label{feb1224f}
\Pm\big(\sup_{0\le t\le T}|R(u_t)-R_0|>b\big)+
\Pm\big(\sup_{0\le t\le T}|L(u_t)-L_0|>b\big)\le 
C_T\exp\Big(-\farc{b^2}{100T}\Big).
\end{equation}
\end{lemma}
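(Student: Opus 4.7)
The plan is to establish the quantitative tail bound (\ref{feb1224f}); the a.s.\ finiteness statements then follow immediately from Borel--Cantelli applied along the sequence $b=n\to\infty$. By the symmetry of (\ref{eq:RFKPP-general}) under $u\mapsto 1-u$ and $x\mapsto -x$, it suffices to bound the probability that the right edge shifts to the right by more than $b$ within time $T$. I would introduce the stopping time
\[
\tau_b = \inf\{t\ge 0 : R(u_t) > R_0 + b\}\wedge T,
\]
and note that on $\{\tau_b > s\}$ one has, by definition, $u_s(y) = 0$ for every $y > R_0+b$, so both the drift $f(u_s(y))$ and the noise coefficient $\sqrt{u_s(y)(1-u_s(y))}$ are supported in $y\le R_0+b$ throughout the interval $[0,\tau_b]$.

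The core of the argument is to evaluate the mild form (\ref{eq:mild-form}) for $u_{\tau_b\wedge T}(x)$ at a far test point $x=R_0+2b$ and bound its three pieces separately. The heat-kernel contribution $G_{\tau_b\wedge T}u_0(x)$ is pointwise at most $\Pm(B_T \ge 2b)\lesssim\exp(-2b^2/T)$. For the drift term, using $|f|\le\|f\|_\infty$ together with the support restriction on $y$ gives a bound of order $\|f\|_\infty T\cdot\exp(-b^2/(2T))$; the prefactor $\|f\|_\infty T$ is precisely what the hypothesis $b\ge 4\sqrt{T}(T\|f\|_\infty\vee 1)$ is designed to absorb into the Gaussian rate. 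The stochastic piece is controlled via Burkholder--Davis--Gundy applied to the heat-kernel-weighted martingale, using $u(1-u)\le 1/4$ and the Gaussian estimate $\int_{-\infty}^{R_0+b}G_r(x-y)^2\,dy\lesssim r^{-1/2}\exp(-b^2/r)$, which after integration in $r\in[0,T]$ yields high moments of Gaussian order $\exp(-cb^2/T)$. To promote the resulting pointwise bound to the full event $\{R(u_{\tau_b\wedge T}) > R_0+b\}$, I would run the above estimate along a countable dense mesh in $(R_0+b,\infty)$ and stitch the bounds together using a Kolmogorov chaining that exploits the known H\"older regularity of $u$ in $x$.

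The main obstacle is that the stopping time $\tau_b$ is well-defined only once $R(u_t)$ is known to be finite, which is itself part of the conclusion. I would resolve this circularity by first carrying out the mild-form argument above for the Lipschitz approximants $u^n$ constructed in Section~2.1, for which compact support of $u^n_t$ for $t>0$ is directly available, and then pass to the limit using the tightness of Shiga~\cite{shi94} invoked in the existence proof. An alternative route uses the localized equation (\ref{eq:RFKPP-general_a}): the drift of $v^b$ vanishes on $(-10b,10b)$, so compact-support bounds for $v^b$ follow by direct comparison with the driftless equation (\ref{eq:spde-w}) treated by Tribe~\cite{tri95}, and the Girsanov identity (\ref{Girsanov1-feb12-b}) transfers the estimate back to $u$, the density $e^{Z^b_T}$ having moments controlled via (\ref{18_2_15}).
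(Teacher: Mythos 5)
Your overall architecture (localize the drift to get $v^b$, prove a compact-support estimate for $v^b$, transfer to $u$ by Girsanov using \eqref{18_2_15} to control $\E[e^{2Z^b_T}]$) is exactly the paper's, and your Girsanov transfer step is sound. But the core of your argument has a genuine gap: moment bounds on the mild form at a far test point, even upgraded by BDG, chaining and H\"older continuity, show only that $u_t(x)$ is \emph{small} with high probability for $x>R_0+b$ --- they cannot show that $u_t(x)=0$ there. The event $\{R(u_t)>R_0+b\}$ is $\{u_t(y)>0$ for some $y>R_0+b\}$, and Markov/Chebyshev with any moment gives $\Pm(u_t(x)>\eps)\le\eps^{-1}(\text{moment bound})$, which is useless as $\eps\downarrow 0$. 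This is precisely why compact support for SPDEs with noise coefficient $\sqrt{u(1-u)}$ is delicate: the paper handles it via the exponential-duality argument of Tribe's Proposition~3.2, i.e.\ the backward nonlinear PDE \eqref{eq:12_1}, It\^o's formula applied to $\exp(-\langle v^b_s,\phi^\lambda_s\rangle-\lambda\int_0^s\langle v^b_{s'},\psi_b\rangle ds')$, and the limit $\lambda\to+\infty$, in which $\phi^\lambda$ blows up on $(b,2b)$ but stays bounded outside by the singular barrier \eqref{eq:12_2}--\eqref{eq:12_3bis}. That mechanism, which converts ``charges the interval $(b,2b)$'' into a quantitative probability, is absent from your proposal. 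Your mild-form/BDG estimate corresponds only to the secondary half of the paper's proof, namely bounding $\Pm(\tau_b\le t)$ where $\tau_b$ is the first time $v^b$ exceeds $1/2$ on $[b/2,3b]$ (an event where the solution is \emph{large}, hence amenable to moment bounds via Lemma~\ref{lem:tr_3.1}); the duality argument is needed to rule out the solution being positive but tiny.

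Your fallback, ``compact-support bounds for $v^b$ follow by direct comparison with the driftless equation \eqref{eq:spde-w} treated by Tribe,'' also does not work as stated: there is no comparison principle dominating $v^b$ by a solution of \eqref{eq:spde-w}, since the drift $f(v^b)\1_{\{|x|>10b\}}$ has no sign, and even for $f\ge0$ the comparison goes the wrong way. The paper instead reruns Tribe's duality computation for $v^b$ itself, carrying the extra drift term through the It\^o formula and estimating its contribution $\|f\|_\infty\langle\1_{(-\infty,-10b)\cup(10b,\infty)},\phi^\infty_s\rangle$ via the Gaussian decay \eqref{eq:12_3}--\eqref{eq:12_3bis} of $\phi^\infty$ away from $(b,2b)$; the hypothesis $b\ge4\sqrt{T}(T\|f\|_\infty\vee1)$ is used there and in the $\tau_b$ estimate, not merely to absorb a prefactor in a heat-kernel bound. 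Your circularity concern about defining the stopping time is legitimate, but the paper avoids it differently: the duality argument and the martingale estimates are carried out for \emph{any} $[0,1]$-valued solution without presupposing finiteness of $R(v^b_t)$, so no approximation-and-passage-to-the-limit is needed at this stage.
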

An immediate consequence is 
\begin{corollary}\label{cor-feb1208}
We have, for each $T\ge 0$:
\begin{equation}\label{feb1224bis}
 \E\big[\sup_{0\le t\le T}|R(u_t)-L(u_t)|\big]<+\infty.
\end{equation}
\end{corollary}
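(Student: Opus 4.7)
The plan is to deduce Corollary~\ref{cor-feb1208} as a direct consequence of the Gaussian tail bound provided by Lemma~\ref{lem:feb12-1}. First I would set up the reduction. By spatial translation invariance of equation~(\ref{eq:RFKPP-general}), I may assume without loss of generality that $R(u_0)\le 0$, so that Lemma~\ref{lem:feb12-1} applies. Then for every $t\in[0,T]$ the triangle inequality gives
\begin{equation*}
|R(u_t)-L(u_t)|\le |R(u_t)-R_0|+(R_0-L_0)+|L(u_t)-L_0|,
\end{equation*}
and since $L_0,R_0$ are deterministic and finite by (\ref{feb1206}), it suffices to prove
\begin{equation*}
\E\bigl[\sup_{0\le t\le T}|R(u_t)-R_0|\bigr]<\infty
\quad\text{and}\quad
\E\bigl[\sup_{0\le t\le T}|L(u_t)-L_0|\bigr]<\infty.
\end{equation*}

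Next I would carry out the elementary tail-integration argument. Writing $X_T:=\sup_{0\le t\le T}|R(u_t)-R_0|$, which is nonnegative, the layer-cake identity gives $\E[X_T]=\int_0^\infty \Pm(X_T>b)\,db$. Setting $b_0:=4\sqrt{T}(T\|f\|_\infty\vee 1)$, I split this integral at $b_0$: on $[0,b_0]$ I use the trivial bound $\Pm(X_T>b)\le 1$, which contributes at most $b_0<\infty$; on $[b_0,\infty)$ I apply the tail estimate (\ref{feb1224f}) of Lemma~\ref{lem:feb12-1} to obtain a contribution bounded by $C_T\int_{b_0}^\infty \exp(-b^2/(100T))\,db$, which is finite because the Gaussian is integrable. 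The same argument applied to $\sup_{0\le t\le T}|L(u_t)-L_0|$ finishes the proof.

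No step looks delicate: the content is entirely in Lemma~\ref{lem:feb12-1}, and the corollary is a routine conversion of a sub-Gaussian tail bound into an $L^1$ bound via Fubini. If anything demanded care it would just be checking that the lemma is applied to both the right and left edges (the left-edge tail estimate is included in (\ref{feb1224f}) in exactly the symmetric form needed), and noting that the constant $R_0-L_0$ is finite by hypothesis~(\ref{feb1206}) so that it drops out harmlessly from the expectation.
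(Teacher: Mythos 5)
Your proof is correct and is exactly the routine tail-integration argument the paper has in mind: the authors state the corollary as an ``immediate consequence'' of Lemma~\ref{lem:feb12-1} without writing out the details, and your reduction by translation invariance, the triangle inequality through $R_0$ and $L_0$, and the layer-cake splitting at $b_0=4\sqrt{T}(T\|f\|_\infty\vee 1)$ supply precisely those details. No gaps.
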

In other words, any solution to (\ref{eq:RFKPP-general}) has an interface that has a finite length almost surely.

\subsubsection*{Bounds on the martingale with the cut-off}

The proof of Lemma~\ref{lem:feb12-1} relies on a priori bounds
on the propagation of $v^b$, solution to (\ref{eq:RFKPP-general_a}). 
First, we need to control the modulus of continuity of the martingale $N^b_t(\cdot)$ defined in (\ref{feb2402}).
\begin{lemma}
\label{lem:feb19_1}
Let $v^b_t(x)$ be a solution to (\ref{eq:RFKPP-general_a})  taking values in $\hBI$ for all $t\ge 0$, such that  the initial
condition~$v^b_0(x)$
satisfies (\ref{feb1206}) with $R(v^b_0)\leq 0$.  Then, for all $p\geq 1$, there exists $C(p)>0$ so that for 
all  $t\ge 0$, and $x,y\in [b/2, 9b]$ we have
\begin{align}\label{feb17_1}
\E\big[|N^b_t(x)-N^b_t(y)|^{2p}\big] &\le C(p)(|x-y|\wedge t^{1/2})^{p-1}t^{1/2}\\
 \nonumber 
& \times \int_\R (G_t(x-z)+G_t(y-z)) (v^b_0(z) +t \| f\|_{\infty}\1_{\{z\in  (-\infty, -10b)\cup (10b,\infty)\}})\,dz,\\
\label{feb17_2}
\E\big[|N^b_t(x)-N^b_s(x)|^{2p}\big] &\le C(p)|t-s|^{(p-1)/2}t^{1/2}\\
 \nonumber & \times \int_\R (G_t(x-z)+G_s(x-z))( v^b_0(z) + t\| f\|_{\infty}\1_{\{z\in  (-\infty, -10b)\cup (10b,\infty)\}})\,dz.
\end{align}
\end{lemma}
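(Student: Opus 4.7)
The proof is a standard moment computation for the stochastic convolution $N^b$, combining the Burkholder-Davis-Gundy (BDG) inequality with explicit heat kernel estimates. The plan is to reduce the $(2p)$-th moment of $N^b_t(x)-N^b_t(y)$ to the $p$-th moment of its quadratic variation, factor out a deterministic regularity prefactor $(|x-y|\wedge t^{1/2})^{p-1}$, and control the remaining single power via the mild form of $v^b$ together with an explicit Gaussian convolution computation.

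Concretely, for (\ref{feb17_1}), BDG for stochastic integrals against space-time white noise yields $\E[|N^b_t(x)-N^b_t(y)|^{2p}] \le C(p)\,\E[Q^p]$, where $Q=\int_0^t\int_\R (G_{t-s}(x-z)-G_{t-s}(y-z))^2 v^b_s(z)(1-v^b_s(z))\,dz\,ds$. The trivial bound $v^b_s(1-v^b_s)\le 1$ combined with the elementary Parseval-type estimate $\int_0^t\int_\R (G_{t-s}(x-z)-G_{t-s}(y-z))^2\,dz\,ds \le C(|x-y|\wedge t^{1/2})$ gives $Q\le C(|x-y|\wedge t^{1/2})$ almost surely. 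Splitting $Q^p=Q^{p-1}\cdot Q$ and pulling this deterministic bound out of the expectation produces the advertised prefactor $(|x-y|\wedge t^{1/2})^{p-1}$, reducing the problem to estimating $\E[Q]$.

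To bound $\E[Q]$, I take expectations in the mild form (\ref{eq:RFKPP-general_mild_a}), note $\E[N^b_s(z)]=0$, and use $|f|\le \|f\|_\infty$ to get $\E[v^b_s(z)] \le (G_s v^b_0)(z) + \|f\|_\infty\int_0^s(G_{s-r}*\1_{\{|w|>10b\}})(z)\,dr$. Using $v^b_s(1-v^b_s)\le v^b_s$ and Fubini, the key calculation reduces to bounding $\int_0^t\int_\R(G_{t-s}(x-z)-G_{t-s}(y-z))^2 G_s(z-w)\,dz\,ds$. The Gaussian identity $G_{t-s}(x-z)G_{t-s}(y-z)=(4\pi(t-s))^{-1/2}e^{-(x-y)^2/(4(t-s))}G_{(t-s)/2}(z-(x+y)/2)$, the comparison $G_{(t+s)/2}(a)\le\sqrt{2}\,G_t(a)$ valid for $s\in[0,t]$, and $\int_0^t(t-s)^{-1/2}ds=2t^{1/2}$ together yield the bound $Ct^{1/2}(G_t(x-w)+G_t(y-w))$ for this integral. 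The indicator contribution, which carries the extra $r$-integral, is absorbed using $\int_0^t e^{-a^2/(2u)}du\le t\,e^{-a^2/(2t)}$. Collecting everything gives $\E[Q]\le Ct^{1/2}\int_\R(G_t(x-z)+G_t(y-z))(v^b_0(z)+t\|f\|_\infty\1_{\{|z|>10b\}})\,dz$, which combined with the prefactor yields (\ref{feb17_1}).

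For the time modulus (\ref{feb17_2}), I decompose $N^b_t(x)-N^b_s(x)=A+B$, with $A=\int_s^t\int G_{t-r}(x-z)\sqrt{v^b_r(1-v^b_r)}\,W(dr\,dz)$ and $B=\int_0^s\int(G_{t-r}(x-z)-G_{s-r}(x-z))\sqrt{v^b_r(1-v^b_r)}\,W(dr\,dz)$, and apply the same BDG-plus-factorization strategy to each piece. The Parseval ingredient becomes $\int_s^t\int G_{t-r}(x-z)^2\,dz\,dr+\int_0^s\int(G_{t-r}(x-z)-G_{s-r}(x-z))^2\,dz\,dr\le C|t-s|^{1/2}$, and the Gaussian convolution computations are parallel. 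The main technical obstacle throughout is the careful bookkeeping required so that the regularity prefactor and the Gaussian spatial profile $G_t(x-z)+G_t(y-z)$ emerge simultaneously from the convolution identity; done naively, one easily loses either the sharp modulus factor or the Gaussian decay.
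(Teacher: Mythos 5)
Your proof is correct and follows essentially the same route as the paper's: Burkholder's inequality, extraction of the $(|x-y|\wedge t^{1/2})^{p-1}$ prefactor from the quadratic variation, reduction to a bound on $\E[v^b_s]$ via the mild form \eqref{eq:RFKPP-general_mild_a}, and Gaussian convolution/semigroup estimates producing the factor $t^{1/2}(G_t(x-\cdot)+G_t(y-\cdot))$. The only differences are cosmetic: you pull out the prefactor from the almost-sure bound $Q\le C(|x-y|\wedge t^{1/2})$ and the splitting $Q^p=Q^{p-1}Q$ where the paper uses H\"older's inequality inside the expectation, and you absorb the cut-off indicator term by a pointwise-in-time Gaussian comparison where the paper invokes monotonicity in $r$ of the integrated tail $\int_{|z|\ge 10b}G_r(x-z)\,dz$ for $x\in(b/2,9b)$ --- both yield the same estimate.
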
 
\begin{proof}
The proof follows the lines of  the proof of Lemma~3.1 in~\cite{tri95}.
We only verify~\eqref{feb17_1}. Note that 
\begin{equation} 
\int_0^t\int_\R (G_{t-s}(x-z)-G_{t-s}(y-z))^2\,dz\,ds \leq C(|x-y|\wedge t^{1/2})\;\;\forall t>0, x,y\in \R.
\end{equation}
Burkholder's and  H\"older's inequalities give
\begin{equation}\label{feb17_3}
\begin{aligned}
\E\big[|&N^b_t(x)-N^b_t(y)|^{2p}\big]
\le C(p)\E\left[ \left( \int_0^t\int_\R (G_{t-s}(x-z)-G_{t-s}(y-z))^2 v^b_s(z)(1-v^b_s(z))\,dz\,ds \right)^p \right] 
\\
&\le  C(p)(|x-y|\wedge t^{1/2})^{p-1}
\E\left[\int_0^t\int_\R (G_{t-s}(x-z)-G_{t-s}(y-z))^2 \big( v^b_s(z)(1-v^b_s(z))\big)^p\,dz\,ds  \right] 
\\
&\le  C(p)(|x-y|\wedge t^{1/2})^{p-1}
\E\left[\int_0^t\int_\R (G_{t-s}(x-z)-G_{t-s}(y-z))^2  v^b_s(z)\,dz\,ds  \right]
\\
&\le  C(p)(|x-y|\wedge t^{1/2})^{p-1}
\E\left[\int_0^t (t-s)^{-1/2} \int_\R (G_{t-s}(x-z)+G_{t-s}(y-z))  v^b_s(z)\,dz\,ds  \right].
\end{aligned}
\end{equation}
We used the fact that $0\le v^b\le 1$ in the third inequality above. 
Note that 
\begin{equation}\label{feb17_4}
\begin{aligned}
\E[ v^b_s(x)]&=
G_sv^b_0(x)+\E\Big[\int_0^s\int_\R G_{s-r}(x-z)  f(v^b_r(z)) \1_{\{z\in  (-\infty, -10b)\cup (10b,\infty)\}}\,dz\,dr\Big]\\
&\leq G_sv^b_0(x)+\| f\|_{\infty} \int_0^s\int_\R G_{s-r}(x-z)   \1_{\{z\in  (-\infty, -10b)\cup (10b,\infty)\}})\,dz\,dr.
\end{aligned}
\end{equation}
We substitute this bound into the right side of~\eqref{feb17_3} and use the semi-group property of $G_t$ to get 
\begin{equation}\label{feb17_5}
\begin{aligned}
\E\big[|N^b_t(x)-N^b_t(y)|^{2p}\big]
&\!\le  C(p)(|x-y|\wedge t^{1/2})^{p-1}
\Big\{\int_0^t (t-s)^{-1/2} \Big(\int_\R (G_{t}(x-z)+G_{t}(y-z))  v^b_0(z)\,dz 
\\
\nonumber
& 
+ \int_0^s  \| f\|_{\infty} \int_\R (G_{t-r}(x-z)+G_{t-r}(y-z))  \1_{\{z\in  (-\infty, -10b)\cup (10b,\infty)\}})\,dz
\,  dr\Big)\,ds \Big\}
\\
&\le  C(p)(|x-y|\wedge t^{1/2})^{p-1}
  t^{1/2} \left(\int_\R (G_{t}(x-z)+G_{t}(y-z))  v^b_0(z)\,dz\right.
\\
\nonumber
& \quad
\left. +    \| f\|_{\infty} \int_0^t \int_\R (G_{t-r}(x-z)+G_{t-r}(y-z))  \1_{\{z\in  (-\infty, -10b)\cup (10b,\infty)\}})\,dz
\,  dr \right) .
\end{aligned}
\end{equation}
Since $x,y \in (b/2, 9b)$ and $z\geq 10b$ we have 
\begin{equation}
\int_{z\geq 10b} G_{r}(x-z)\,dz \leq \int_{z\geq 10b}  G_t(x-z)\,dz , ~~\forall x\in (b/2, 9b), 0\leq r\leq t, 
\end{equation}
and thus we get 
\begin{equation}\label{feb17_6}
\begin{aligned}
\E\big[|N^b_t(x)-N^b_t(y)|^{2p}\big]
&\le  C(p)(|x-y|\wedge |t-s|^{1/2})^{p-1}\\
&\times 
  t^{1/2} \int_\R (G_{t}(x-z)+G_{t}(y-z)) \Big( v^b_0(z)+t\| f\|_{\infty}  \1_{\{z\in  (-\infty, -10b)\cup (10b,\infty)\}}\Big)   \,dz,
\end{aligned}
\end{equation}
which is \eqref{feb17_1}.
The proof of \eqref{feb17_2} goes along similar lines. 
\end{proof}

A corollary of Lemma~\ref{lem:feb19_1} is a bound on the size of $N_b^s(x)$.
\begin{lemma}
\label{lem:tr_3.1}
Let $v^b_t(x)$ be a solution to (\ref{eq:RFKPP-general_a}),  taking values in $\hBI$ for all $t\ge 0$, and the initial
condition~$v^b_0(x)$
satisfies (\ref{feb1206}) with $R(v^b_0)\leq 0$.  Then, for all $t> 0$, 
there exists $C$ such that 
\begin{align}\label{feb17_8}
\Pm &\big(|N^b_s(x)| \geq \eps\;\text{for some}\; x\in (b/2, 9b), s\in [0,t]  \big)\\
\nonumber
&\le C\eps^{-20} (t\vee t^{22})    
 \int_\R \int_\R G_t(x-z) \Big(v^b_0(z) +t \| f\|_{\infty}\1_{\{z\in  (-\infty, -10b)\cup (10b,\infty)\}})\,dz \1_{\{x\in (b/2,9b)\}}\Big) \,dx.
\end{align}
\end{lemma}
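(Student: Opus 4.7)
The plan is to convert the moment bounds on increments of $N^b$ provided by Lemma~\ref{lem:feb19_1} into a uniform tail bound via a Kolmogorov-type chaining argument, in the spirit of the proof of Lemma~3.2 of Tribe~\cite{tri95}. I would apply Lemma~\ref{lem:feb19_1} with $p=10$, so that Markov's inequality against the $20$-th moment produces the $\eps^{-20}$ prefactor. The value $p=10$ is large enough that, after summing over a dyadic chain in the two-dimensional parameter $(s,x)\in[0,t]\times(b/2,9b)$, the resulting series converges and yields a genuine H\"older modulus of continuity in each variable.

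First I would observe that $N^b_0(x)\equiv 0$, so applying~\eqref{feb17_2} with $s'=0$ already controls $\E[|N^b_s(x)|^{20}]$ at a single point $(s,x)$. I would then cover the rectangle $[0,t]\times(b/2,9b)$ by a grid whose spacing is a small power of $\eps$ in space and of its square in time, to match the parabolic scaling suggested by~\eqref{feb17_1}--\eqref{feb17_2}. At each grid point $(s_i,x_j)$, Markov's inequality applied to the $20$-th moment bounds $\Pm(|N^b_{s_i}(x_j)|>\eps/2)$. To upgrade this pointwise estimate to a uniform one, I would use dyadic chaining within each grid cell: write $N^b_s(x)-N^b_{s_i}(x_j)$ as a telescoping sum of increments across successive dyadic refinements, bound the $20$-th moment of each term using~\eqref{feb17_1}--\eqref{feb17_2}, and close up by a union bound across scales. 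Summing these $20$-th moments over the grid and using~\eqref{feb17_4} together with the semigroup property (exactly as in the proof of Lemma~\ref{lem:feb19_1}) produces, after bounding $G_r(x-z)$ by $CG_t(x-z)$ for $x\in(b/2,9b)$, $z\notin[-10b,10b]$, $0\le r\le t$, the integral
\[
\int_\R\int_\R G_t(x-z)\bigl(v^b_0(z)+t\|f\|_\infty\1_{\{z\notin[-10b,10b]\}}\bigr)\,dz\,\1_{\{x\in(b/2,9b)\}}\,dx
\]
appearing on the right-hand side of~\eqref{feb17_8}.

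The main technical obstacle is tracking all the powers of $t$ that accumulate in the argument. The factor $t^{1/2}$ is already present in Lemma~\ref{lem:feb19_1}; additional polynomial factors arise from counting grid points in time and space, from summing over the dyadic scales, from replacing $G_r$ by $G_t$ uniformly for $r\in[0,t]$, and from integrating the contribution of the nonlinearity on $\{|z|>10b\}$ in the time variable. Collecting these gives the crude prefactor $t\vee t^{22}$ stated in~\eqref{feb17_8}; the precise power is not sharp, but any such polynomial bound is enough for the applications in the subsequent sections. A related point that requires care is keeping the chain strictly inside $(b/2,9b)$, so that the replacement $G_r(x-z)\le CG_t(x-z)$ for $z\notin[-10b,10b]$ remains valid; this is the reason the supremum is taken over $(b/2,9b)$ rather than over all of $\R$.
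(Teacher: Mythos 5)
Your plan is exactly the argument the paper relies on: the paper's proof simply invokes the second part of the proof of Lemma~3.1 in Tribe's work, which is precisely the Kolmogorov-type chaining of the $2p$-th moment bounds (with $p=10$, hence $\eps^{-20}$) over a grid in $(s,x)\subset[0,t]\times(b/2,9b)$, with $v^b_0(z)+t\|f\|_\infty\1_{\{z\notin[-10b,10b]\}}$ replacing the initial datum there. Your account of where the $t\vee t^{22}$ prefactor comes from and why the chain must stay inside $(b/2,9b)$ matches the intended argument.
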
 
\begin{proof}
The proof goes exactly as the second part of the proof of Lemma~3.1 in~\cite{tri95} 
(on p.~295) while taking   $v^b_0(z) +t \| f\|_{\infty}\1_{\{z\in (-\infty, -10b)\cup (10b,\infty)\}}$ instead of
$f$ and $  (b/2,9b)$ instead of $(A,\infty)$ there. 
\end{proof}

\subsubsection*{The support of the solution with a cut-off}

Now, we prove the following lemma. 
\begin{lemma}
\label{lem:18_2_2}
Let $v^b_t(x)$ be a solution to (\ref{eq:RFKPP-general_a})  taking values in $\hBI$ for all $t\ge 0$ such  that  the initial
condition~$v^b_0(x)$
satisfies  (\ref{feb1206}) with $R(v^b_0)\leq 0$.  Then, for all $t> 0$
there exists $C_t>0$ so that for 
all~$b\ge 4\sqrt{t}(t \|f\|_\infty \vee 1)$ we have
\begin{equation}\label{feb1224}
\Pm\big(\sup_{0\le s\le t}\sup_{x\in [b,2b]} v^b_s(x)>0\big)\le 
C(t, \| f\|_{\infty})\exp\Big(-\farc{b^2}{50t}\Big).
\end{equation}
\end{lemma}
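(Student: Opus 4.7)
The plan is to control the three contributions on the right-hand side of the mild form~\eqref{eq:RFKPP-general_mild_a} on the slab $[0,t]\times[b,2b]$ using Gaussian-tail estimates and the moment bound for the martingale $N^b$ provided by Lemma~\ref{lem:tr_3.1}, and then to invoke the compact interface property of the voter-type SPDE to pass from smallness of $v^b$ to the exact vanishing asserted in the lemma.

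First, I bound the deterministic terms on $[b,2b]\times[0,t]$. Since $v^b_0$ is supported in $(-\infty,0]$ with values in $[0,1]$, the Gaussian tail of the heat kernel gives, for $x\in[b,2b]$ and $s\in(0,t]$,
\[
G_sv^b_0(x)\le \int_{-\infty}^{0}G_s(x-z)\,dz\le e^{-x^2/(2s)}\le e^{-b^2/(2t)}.
\]
For the drift term, since $f$ is switched off on $[-10b,10b]\supset[b,2b]$ and the support $\{|z|\ge 10b\}$ lies at distance $\ge 8b$ from $x\in[b,2b]$, another Gaussian-tail calculation together with $\int_0^t e^{-32b^2/(t-r)}\,dr\le t\,e^{-32b^2/t}$ yields a deterministic bound $2t\|f\|_\infty e^{-32b^2/t}$. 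Under the hypothesis $b\ge 4\sqrt t(t\|f\|_\infty\vee 1)$ both contributions are negligible compared to $e^{-b^2/(8t)}$.

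Next, I apply Lemma~\ref{lem:tr_3.1} with the choice $\eps:=e^{-b^2/(1000\,t)}$. Splitting the double integral in the conclusion of that lemma into the $v^b_0$-piece (supported in $z\le 0$, yielding $\lesssim b\,e^{-b^2/(8t)}$ after integration over $x\in(b/2,9b)$) and the $\1_{|z|\ge 10b}$-piece (even smaller), and using $\eps^{-20}=e^{b^2/(50t)}$, gives
\[
\Pm\bigl(|N^b_s(x)|\ge\eps\ \text{for some}\ (s,x)\in[0,t]\times[b/2,9b]\bigr)\le C(t,\|f\|_\infty)\,e^{-b^2/(50t)},
\]
since $\tfrac18-\tfrac1{50}=\tfrac{21}{200}>\tfrac{1}{50}=\tfrac{4}{200}$, and the remaining polynomial factors in $b$ and $t$ are absorbed into the exponential slack. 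Call the complementary good event $G$.

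The main obstacle is the final step: upgrading the easy consequence $v^b_s(x)\le 3\eps$ obtained by substituting the above bounds into~\eqref{eq:RFKPP-general_mild_a} on $G$, to the exact statement $v^b_s(x)=0$ on $[0,t]\times[b,2b]$. For this I would appeal to the compact interface property of the voter SPDE~\eqref{eq:spde-w} established in~\cite{mt97}, which adapts to~\eqref{eq:RFKPP-general_a} because the drift vanishes on the central strip $[-10b,10b]\supset[b,2b]$: the right edge $R(v^b_s)$ is a.s.\ finite and continuous in $s$, and can only advance into a previously empty region through the noise $N^b$. Introducing $\tau:=\inf\{s\le t:R(v^b_s)\ge b\}$ and reasoning at $s=\tau$ on $\{\tau\le t\}$, continuity forces $v^b_\tau(x)=0$ for all $x\ge b$, hence $N^b_\tau(x)=-G_\tau v^b_0(x)-D^b_\tau(x)$ for $x$ just above $b$; combining this edge-dynamics identity on a shrinking sequence of test points $x=b+\delta_n$ with the voter-model structure of~\eqref{eq:RFKPP-general_a} on the central strip (so that a genuine crossing of $b$ must be driven by a macroscopic $N^b$ contribution, ruled out on $G$) should exclude $\{\tau\le t\}\cap G$ and yield the claim. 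Making this edge-dynamics argument quantitative, so that it produces a genuine contradiction with $|N^b|<\eps$ rather than merely a compatible small bound, is the subtle part of the proof.
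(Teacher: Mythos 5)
Your treatment of the routine ingredients (Gaussian tails for $G_sv^b_0$ and for the cut-off drift, and the application of Lemma~\ref{lem:tr_3.1} to control $N^b$) is fine, and indeed the paper also uses Lemma~\ref{lem:tr_3.1} for one half of its argument --- though only with a \emph{fixed} threshold $\eps=0.3$, to show that the event $\tau_b=\inf\{s:\exists x\in[b/2,3b],\ v^b_s(x)\ge 1/2\}$ occurs before time $t$ only with probability $\le C\exp(-b^2/(50t))$; the exponential smallness there comes from the factor $\int G_t(x-z)v^b_0(z)\,dz$ with $z\le 0$ and $x\ge b/2$, not from taking $\eps$ exponentially small. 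The problem is the step you yourself flag as "the subtle part": passing from $v^b\le 3\eps$ on the slab to $v^b\equiv 0$ there. This is not a technicality to be outsourced --- it is the entire content of the lemma, and the sketch you give does not close it. Smallness of $N^b$ is perfectly compatible with $v^b$ being small and strictly positive on $[b,2b]$ (for a Lipschitz noise coefficient the solution would in fact be instantaneously positive everywhere), so no identity of the form $N^b_\tau(x)=-G_\tau v^b_0(x)-D^b_\tau(x)$ at the edge can produce a contradiction with $|N^b|<\eps$. Moreover $R(v^b_s)$ has no a priori continuity in $s$, the "edge can only advance through the noise" heuristic has no rigorous content here, and the compact interface property of \cite{mt97} concerns the stationary voter SPDE with $f=0$ and does not transfer to the quantitative tail bound \eqref{feb1224}.

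What actually does the work in the paper (following Proposition~3.2 of \cite{tri95}) is an exponential duality that exploits the degeneracy of the noise coefficient $\sqrt{v(1-v)}$ at $v=0$. One solves the backward equation $-\partial_s\phi^\lambda=\tfrac12\Delta\phi^\lambda-\tfrac14(\phi^\lambda)^2+\lambda\psi_b$ with $\psi_b$ supported on $(b,2b)$, applies It\^o's formula to $\exp(-\langle v^b_s,\phi^\lambda_s\rangle-\lambda\int_0^s\langle v^b_{s'},\psi_b\rangle\,ds')$, and sends $\lambda\to\infty$. The quadratic absorption term $-\tfrac14(\phi^\lambda)^2$ --- which exists precisely because the quadratic variation $\tfrac12\langle v(1-v),(\phi^\lambda)^2\rangle$ is linear in $v$ near $v=0$ --- keeps $\phi^\infty$ finite away from $(b,2b)$ (bounds \eqref{eq:12_2}--\eqref{eq:12_3bis}), while $\phi^\lambda\to\infty$ on $(b,2b)$ forces the event $\{\langle v^b_s,\psi_b\rangle>0$ for some $s\le t\wedge\tau_b\}$ to have probability controlled by the Gaussian tails of $\phi^\infty$. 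It is this mechanism, combined with the separate bound on $\Pm(\tau_b\le t)$, that yields exact vanishing with the stated exponential bound. Your proposal contains no substitute for it, so the proof is incomplete at its central point.
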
 
\begin{proof}

We will 
follow the proof  of Proposition~3.2 in~\cite{tri95}. 
Let us take a function~$\psi\in L^1(\Rm)\cap C(\Rm)$ such that $0\le\psi(x)\le 1$
for all $x\in\Rm$ and  
$\{x: \psi(x)>0\} = (0,b)$, and set $\psi_b(x)=\psi(x-b)$. For simplicity of
notation, we define 
\[ 
\langle h,g\rangle = \int_{\R} h(x) g(x)\,dx  
\]
for any functions $h,g$ such that the integral above exists. 

Fix $t>0$ and let $\phi^\lambda_s(x)$,  $0\le s\le t$, $x\in \R$  
be the unique non-negative bounded solution to the backward in time problem
\begin{equation}
\label{eq:12_1}
-\partial_s \phi_s^\lambda = \frac{1}{2}\Delta \phi^\lambda_s - \frac{1}{4}(\phi^\lambda_s)^2 +\lambda\psi_b,
\end{equation} 
with the terminal condition $\phi^\lambda_t(x) \equiv 0. $ 
A similar equation to (\ref{eq:12_1}) but
with different function $\psi_b$ in the right side 
appears in the proof of Proposition~3.2 in~\cite{tri95}. As $\psi_b(x)\ge 0$ for all
$x\in\Rm$, the maximum principle implies existence of the solution to~\eqref{eq:12_1}
and that $\phi_s^\lambda(x)\ge 0$ for all $0\le s\le t$ and $x\in\Rm$.
The maximum principle also implies that
\[   
\phi^\lambda_{s}(x) \leq \lambda
\int_0^{t-s} \int G_{r}(x-y) \psi_b(y)dy \,dr,\;\;s\leq t,     
\] 
and thus $\phi^\lambda_s(x)$ is integrable for all $0\le s\le t$. Next, 
note that the function
\[
\zeta_t(x)=\left\{\begin{array}{l}  \farc{\alpha}{(x-b)^2}, x<b,
\\
\farc{\alpha}{(x-2b)^2}, x>2b,
\end{array}\right.
\]
satisfies, in the region $x<b$, where $\psi_b(x)\equiv 0$:
\[
\begin{aligned}
\partial_t\zeta&-\farc{1}{2}\Delta\zeta+\frac{1}{4}\zeta^2-\lambda\psi_b=
-\frac{1}{2}\farc{2\cdot 3\alpha}{(x-b)^4}
+\frac{\alpha^2}{4(x-b)^4}
=
\farc{\alpha(\alpha -12)}{(x-b)^4}\ge 0,
\end{aligned}
\]
provided that we take $\alpha\ge 12$. As $\zeta_t(x)=+\infty$ at $x=b$, the
maximum principle implies that, for $\alpha$ sufficiently large, we have
\begin{equation}
\label{eq:12_2}
 \phi^\lambda_s (x)\leq 
 \farc{\alpha}{(b-x)^{2}},\;\;\text{ for all $x<b$, $s\leq t$, 
 and $\lambda>0$.} 
\end{equation}  
Similarly, again for $\alpha$ large enough, we get 
\begin{equation}
\label{eq:12_2a}
 \phi^\lambda_s (x)\leq 
 \farc{\alpha}{(2b-x)^{2}},\;\;\text{ for all $x>2b$, $s\leq t$, 
 and $\lambda>0$.} 
\end{equation}  
Now, given any $b\ge 4t^{1/2}$, we may use the fundamental solution for the heat equation
on the half-lines~$x<b-t^{1/2}$, $x>2b+t^{1/2}$  together with the upper bound in (\ref{eq:12_2}) on
$\phi_s^\lambda(x)$ at $x=b-t^{1/2}$, and $x=2b+t^{1/2}$ to conclude that
there exists $\alpha_1>0$  
such that 
\begin{equation}
\label{eq:12_3}
 \phi^\lambda_s(x) \leq \frac{\alpha_1}{t}  
 \exp{\Big( -\frac{(b-x)^2}{20t}\Big) }    ,\;\;\text{
for all  $b\geq 4 t^{1/2}$, $x<b-2t^{1/2}$,  $s\leq t$, and $\lambda>0$,} 
\end{equation}  
and
\begin{equation}
\label{eq:12_3bis}
 \phi^\lambda_s(x) \leq \frac{\alpha_1}{t}  
 \exp{\Big( -\frac{(2b-x)^2}{20t}\Big) }    ,\;\;\text{
for all  $b\geq 4 t^{1/2}$,  $x>2b+2t^{1/2}$, $s\leq t$, and $\lambda>0$.} 
\end{equation}

Next, by It\^o's formula, 
we get, for any $0\le s\le t$: 
\begin{align}
\exp\Big( &-\langle v^b_s\,, \phi^\lambda_s \rangle
-\lambda \int_0^s \langle v^b_{s'}\,,\psi_b\rangle\,ds' \Big)
=  \exp\big( -\langle v^b_0\,, \phi^\lambda_0 \rangle \big)
+ \int_0^s \!\!
\exp\Big( -\langle v^b_{s'}\,, \phi^\lambda_{s'} \rangle
-\lambda \int_0^{s'} \langle v^b_r\,,\psi_b\rangle\,dr \Big) \nonumber\\
\nonumber
&\times\Big(
\langle v^b_{s'}, -\partial_s \phi_{s'}^\lambda- \frac{1}{2}\Delta \phi^\lambda_{s'}  
- \lambda\psi_b\rangle  
-\langle f(v^b_{s'})  \1_{(-\infty,-10b)\cup (10b,\infty)},\phi^\lambda_{s'}\rangle +
\frac{1}{2}\langle v^b_{s'}(1-v^b_{s'}),(\phi^\lambda_{s'})^2\rangle
  \Big)\,ds' \\
  \nonumber
  &
 + M^{\phi^\lambda, \psi_b}_s\,, 
\end{align}
where $s\mapsto M^{\phi^\lambda, \psi_b}_s\,, s\leq t,$ is a local martingale. In fact,  $M^{\phi^\lambda, \psi_b}$ is 
a square integrable martingale: this follows easily from integrability 
of $(\phi^\lambda)^2$. Then we get 
\begin{align}\label{feb1322}
\exp\Big( &-\langle v^b_s\,, \phi^\lambda_s \rangle
-\lambda \int_0^s \langle v^b_{s'}\,,\psi_b\rangle\,ds' \Big)
=  \exp(-\langle v^b_0\,, \phi^\lambda_0 \rangle  ) 
\mbox{}+ \int_0^s 
\exp\Big( -\langle v^b_{s'}\,, \phi^\lambda_{s'} \rangle-
\lambda \int_0^{s'} \langle v^b_r\,,\psi_b\rangle\,dr \Big) \nonumber\\
&\times \big(
\langle -f(v^b_{s'})  \1_{(-\infty,-10b)\cup (10b,\infty)},  \phi_{s'}^\lambda\rangle  
+\langle -\frac{1}{4}v^b_{s'}+\frac{1}{2}v^b_{s'}(1-v^b_{s'}),(\phi^\lambda_{s'})^2\rangle
  \big)\,ds' + M^{\phi^\lambda, \psi_b}_s.
\end{align}
Note that (\ref{eq:12_2}) implies that for $b>R_0$ we have a uniform bound 
\begin{equation}\label{feb1320}
|\langle v^b_0\,, \phi^\lambda_0\rangle|\le c_0,
\end{equation}
with a constant $c_0$ that does not depend on $\lambda$.  
Now we define the stopping times
\[
\tau_b=\inf\Big\{t\geq 0: \exists x\in [b/2,3b]\text{ s.t. }
v^b_t(x)\geq \frac{1}{2}  \Big\},\;\; \rho_b=\inf\{ t\geq 0: \langle v^b_t,\psi_b\rangle>0\}. 
\]
Note that we have
\begin{equation}\label{feb1328}
\langle v^b_{t\wedge \tau_b}\,, \phi^\lambda_{t\wedge \tau_b} \rangle 
+\lambda \int_0^{t\wedge \tau_b} \langle v^b_s\,,\psi_b\rangle\,ds\to+\infty
\hbox{ as $\lambda\to+\infty$,}
\end{equation}
almost surely on the event $\{\rho_b<t\wedge\tau_b)$, thus 
\begin{align}\label{feb1326}
\Pm(\rho_b<t\wedge\tau_b)\le \lim_{\lambda\to+\infty}
\E\Big[ 1-\exp\Big(&-\langle v_{t\wedge \tau_b}\,, \phi^\lambda_{t\wedge \tau_b} \rangle 
-\lambda \int_0^{t\wedge \tau_b} \langle v_s\,,\psi_b\rangle\,ds \Big)\Big]
\end{align}
On the other hand, taking the expectation in (\ref{feb1322}) with $s=t\wedge \tau_b$, we get
\begin{align}
\E\Big[ 1-\exp\Big(&-\langle v^b_{t\wedge \tau_b}\,, \phi^\lambda_{t\wedge \tau_b} \rangle 
-\lambda \int_0^{t\wedge \tau_b} \langle u_s\,,\psi_b\rangle\,ds \Big)\Big]
\leq  \E\big[1-\exp\big( -\langle v^b_0\,, \phi^\lambda_0 \rangle \big)\big]\nonumber\\
&\mbox{}+ \E\Big[ \int_0^t \left( \|f\|_\infty\langle \1_{(-\infty,-10b)\cup (10b,\infty)},  \phi_{s}^\lambda\rangle+
\langle \frac{1}{4}v^b_s \1_{ (-\infty,b/2)\cup (3b,\infty)},(\phi^\lambda_s)^2\rangle \right)
  \,ds\Big].\label{feb1324}
\end{align}
Note that for each $0\le s\le t$ and $x\in\Rm$ the family 
$\phi_s^\lambda(x)$ is increasing in $\lambda$.
Moreover, for $s<t$ and~$x>b$
we have $\phi_s^\lambda(x)\to+\infty$ as $\lambda\to+\infty$, while for $x<b$,
the limit $\phi_s^\infty(x)$ is finite because of~(\ref{eq:12_2}). 
Passing to the 
limit $\lambda\to+\infty$ in (\ref{feb1324}), using the  bound
in (\ref{feb1326}) and since~$v^b_s(x)\leq 1$ for all $s\geq 0, x\in \R$, we get 
\begin{align}\label{feb1330}
\Pm(\rho_b<t\wedge\tau_b)\le
\E\big[&1-\exp\big( -\langle v^b_0\,, \phi^\infty_0 \rangle \big)\big]    \\
&+ \int_0^t \left(
\|f\|_\infty\langle \1_{(-\infty,-10b)\cup (10b,\infty)},  \phi_{s}^\infty\rangle
+
\frac{1}{4}\langle \1_{ (-\infty,b/2)\cup (3b,\infty)},(\phi^\infty_s)^2\rangle
 \right)
  \,ds .
\nonumber
\end{align}
Recalling (\ref{eq:12_3})-(\ref{eq:12_3bis}),  we have 
\begin{align}\label{feb1330bis}
\Pm(\rho_b<t\wedge\tau_{b})&\le \frac{C}{t}\int_{-\infty}^0 e^{-(b-x)^2/(20t)}dx\\
\nonumber
&\qquad + \frac{\|f\|_\infty}{t} \int_{0}^{t}\left(\int_{10b}^{\infty} e^{-(2b-x)^2/(20t)}\,dx + 
  \int_{-\infty}^{-10b} e^{-(b-x)^2/(20t)}\,dx \right)ds \\
\nonumber 
&\qquad\qquad+\frac{C}{t^2}\int_0^t\left(\int_{-\infty}^{b/2} e^{-(b-x)^2/(10t)}
 dx  + \int_{3b}^{\infty} e^{-(2b-x)^2/(10t)}\,dx\right)ds \\
 \nonumber
&\le \frac{C}{b}\exp\left(-\frac{b^2}{40t}\right)\nonumber
+
\frac{C\|f\|_\infty}{b}\exp\Big(-\frac{b^2}{t}\Big)
\le  \frac{C(\|f\|_\infty+1)}{t^{1/2}}\exp\Big(-\frac{b^2}{40t}\Big).\nonumber
\end{align}
We used the assumption that $R_0=0$ in the first term in the right side above.  To estimate the integrals in \eqref{feb1330bis}, 
we used the standard Gaussian estimate
\begin{equation*}
\int_y^\infty\exp(-x^2/2)dx\leq y^{-1}\exp(-y^2/2)
\end{equation*}
along with a few changes of variables.

Now we need to estimate
\begin{align}
\nonumber
\Pm(\tau_b\leq t)&= \Pm\left(\exists x\in  [b/2,3b],  s\leq t: G_sv^b_0(x)+\int_0^s\left(\int_{10b}^\infty
+\int_{-\infty}^{-10b}\right) G_{s-r}(x-z)  f(v^b_r(z))\,dz\,dr \right.\\
\label{18_02_1}
 &\;\;\;\;\;\;\;\;\left.+N^b_s(x)\geq 1/2\right).  
\end{align}
It is easy to check that since $b\geq 4\sqrt{t}(t\|f\|_{\infty}\vee 1)$
\begin{align}
 G_sv^b_0(x)&\leq    \int_{-\infty}^0  G_s(x-z)\,dz \leq  \int_{-\infty}^0  G_t(x-z)\,dz\leq  \int_{-\infty}^0  G_t(b/2-z)\,dz 
 \\
 \nonumber 
 &\leq \int_{-\infty}^0  G_1(2-z)\,dz \leq 1/10,\;\forall s\leq t, x\in [b/2, 3b]. 
\end{align} 
Similarly, we have
\begin{align}
t\int_{10b}^{\infty}  G_{s-r}(z-x) f(v^b_r(z) \,dz&\leq  t\|f\|_{\infty}  \int_{10b}^\infty  G_t(z-3b)\,dz
\leq  t\|f\|_{\infty}  \int_{\frac{7b}{\sqrt{t}}}^\infty  G_1(z)\,dz,\\
\nonumber
&\leq  t\|f\|_{\infty}  \int_{ 28(t\|f\|_{\infty}\vee 1)}^\infty  G_1(z)\,dz\leq 0.05,\;\forall r\leq s\leq t, x\in [b/2, 3b]. 
\end{align} 
and
\begin{align}
t\int_{-\infty}^{-10b}  G_{s-r}(z-x) f(v^b_r(z) \,dz&\leq  t\|f\|_{\infty} \int_{-\infty}^{-10b}  G_t(z-b/2)\,dz
\leq  t\|f\|_{\infty}  \int_{\frac{10b}{\sqrt{t}}}^\infty  G_1(z)\,dz,
\\ \nonumber &
\leq  t\|f\|_{\infty}  \int_{40(t|f\|_{\infty}\vee 1)}^\infty  G_1(z)\,dz\leq 0.05,\;\forall r\leq s\leq t, x\in [b/2, 3b]. 
\end{align}
Altogether substituting the last inequalities into~\eqref{18_02_1} we get 
\begin{align}
\label{18_02_2}
\Pm(\tau_b\leq t)&\leq \Pm\left(\exists x\in  [b/2,3b],  s\leq t:N^b_s(x) \geq 0.3\right)\\
\nonumber
&\le C\cdot(t\vee t^{22})    
 \int_\R \int_\R G_t(x-z) (v^b_0(z) +t \| f\|_{\infty}\1_{\{z\in (-\infty, -10b)\cup (10b,\infty)\}})\,dz \1_{\{x\in (b/2,9b)\}}) \,dx
\\
\nonumber 
&\leq 
C(t, \| f\|_{\infty})\exp\Big(-\farc{b^2}{50t}\Big),\; \forall t>0, x\in [b/2, 3b],
\end{align}
where the second inequality follows by Lemma~\ref{lem:tr_3.1} and in the last one  we used simple Gaussian bounds.
By combining \eqref{18_02_2} with~\eqref{feb1330bis} we are done.  

\end{proof}

\subsubsection*{The proof of Lemma~\ref{lem:feb12-1}}

Now we are ready to prove Lemma~\ref{lem:feb12-1}.
Note that Lemma~\ref{lem:18_2_2}
implies a similar result for  $u_t(x)$. 
\begin{lemma}
\label{lem:18_2_3}
Let $u_t(x)$ be a solution to (\ref{eq:RFKPP-general}) taking values in $\hBI$ for all $t\ge 0$ such that the initial
condition~$u_0(x)$
satisfies (\ref{feb1206}) with $R(u_0)\leq 0$.  Then, for all $T> 0$
there exists $C_T>0$ so that for 
all~$b\ge 4\sqrt{T}(T \|f\|_\infty \vee 1)$ we have
\begin{equation}\label{feb1224_c}
\Pm\Big(\sup_{0\le t\le T}\sup_{x\in [b,2b]} u_t(x)>0\Big)\le 
C(T, \| f\|_{\infty})\exp\Big(-\farc{b^2}{50T}\Big).
\end{equation}
\end{lemma}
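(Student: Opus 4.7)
The idea is to transfer the bound of Lemma~\ref{lem:18_2_2} from the cut-off solution $v^b$ to the full solution $u$ using the Girsanov change of measure \eqref{Girsanov1-feb12-b}, exploiting the fact that the cost of this change of measure is controlled by the a priori bound \eqref{18_2_15}.

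First, I would work on the canonical path space and consider the event
\[
A_b=\Big\{\omega:\sup_{0\le t\le T}\sup_{x\in[b,2b]}\omega_t(x)>0\Big\},
\]
so that $\Pm_{T,u}(A_b)$ is exactly the probability to be estimated, and Lemma~\ref{lem:18_2_2} controls $\Pm_{T,v^b}(A_b)$. By \eqref{Girsanov1-feb12-b} and H\"older's inequality with exponents $p>1$ and $q=p/(p-1)$,
\[
\Pm_{T,u}(A_b)=\E_{T,v^b}\!\left[e^{Z^b_T}\1_{A_b}\right]\le \E_{T,v^b}\!\left[e^{pZ^b_T}\right]^{1/p}\Pm_{T,v^b}(A_b)^{1/q}.
\]

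Next, I would bound the exponential moment of $Z^b_T$. Writing $Z^b_T=M_T-C_T$, where
\[
M_T=\int_0^T\!\!\int_{\R}\frac{f(v^b_s(x))\1_{\{x\in(-10b,10b)\}}}{\sqrt{v^b_s(x)(1-v^b_s(x))}}\,W(dx,ds),\qquad C_T=\tfrac{1}{2}\langle M\rangle_T,
\]
the bound \eqref{18_2_15} gives $C_T\le 10bK_f^2T$ deterministically, so Novikov's criterion is trivially satisfied and $\E_{T,v^b}[e^{pM_T-p^2C_T}]=1$. Therefore
\[
\E_{T,v^b}\!\left[e^{pZ^b_T}\right]=\E_{T,v^b}\!\left[e^{pM_T-p^2C_T}\,e^{(p^2-p)C_T}\right]\le \exp\!\big((p^2-p)\cdot 10bK_f^2T\big).
\]

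Combining these inputs with Lemma~\ref{lem:18_2_2} yields
\[
\Pm_{T,u}(A_b)\le \exp\!\big((p-1)\cdot 10bK_f^2T\big)\cdot C(T,\|f\|_\infty)^{1/q}\exp\!\Big(-\frac{b^2}{50qT}\Big).
\]
For $b$ in the prescribed range, the quadratic-in-$b$ Gaussian tail dominates the linear-in-$b$ cost of the change of measure; a suitable choice of $p$ (close to $1$), together with absorbing residual constants into $C(T,\|f\|_\infty)$, delivers \eqref{feb1224_c}.

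\textbf{Main obstacle.} The delicate point is that $\E_{T,v^b}[e^{pZ^b_T}]$ only has $\exp(O(b))$ moments, because the Girsanov drift acts over the whole interval $(-10b,10b)$ and its contribution to $C_T$ is proportional to its length. The Gaussian factor $\exp(-b^2/(50qT))$ from Lemma~\ref{lem:18_2_2} must beat this linear exponent; matching the constant $50$ in the exponent of \eqref{feb1224_c} is what forces the H\"older exponent $p$ to be chosen close to $1$ and dictates the lower bound on $b$ in the hypothesis.
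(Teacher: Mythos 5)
Your argument is essentially the paper's: it transfers Lemma~\ref{lem:18_2_2} to $u$ via the Girsanov density \eqref{Girsanov1-feb12-b}, H\"older's inequality (the paper takes $p=q=2$, i.e.\ Cauchy--Schwarz), and the deterministic bound \eqref{18_2_15} on the quadratic variation, which yields $\E_{v^b}[e^{2Z^b_T}]\le e^{40bK_f^2T}$. One caveat: H\"older with conjugate exponent $q$ degrades the Gaussian factor to $\exp(-b^2/(50qT))$, a quadratic-in-$b$ loss that cannot be absorbed into $C(T,\|f\|_\infty)$, so taking $p$ close to $1$ (hence $q$ large) goes in the wrong direction for recovering the stated constant $50$ --- but the paper's own proof has the same cosmetic discrepancy, and the constant that actually matters downstream is the $100T$ appearing in Lemma~\ref{lem:feb12-1}.
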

\begin{proof}
By Girsanov's theorem we have
\begin{align}\nonumber
\Pm_u\Big(\sup_{0\le t\le T}\sup_{x\in [b,2b]} u_t(x)>0\Big)&\le 
\E_{v^b}\big[e^{Z^{b}_T} \1_{\{\sup_{0\le t\le T}\sup_{x\in [b,2b]} v^b_t(x)>0\}}\big]
\\
\label{feb1224_d}
&\le 
\E_{v^b}\big[e^{2Z^{b}_T}\big] \Pm_{v^b}\Big(\sup_{0\le t\le T}\sup_{x\in [b,2b]} v^b_t(x)>0\Big),
\end{align}
where $Z^b$ was defined in \eqref{eq:Girsanov-u-b}.
Note that \eqref{18_2_15} holds also $\Pm_{v^b}$-a.s., 
thus from \eqref{eq:Girsanov-u-b} we can easily get 
\begin{equation}
\E_{v^b}\big[e^{2Z^{b}_T}\big] \le e^{40bK^{2}_{f}T},
\end{equation}
and combining this with \eqref{feb1224_d} and Lemma~\ref{lem:18_2_2} we obtain~\eqref{feb1224_c}.
\end{proof}

Now, the conclusions of Lemma~\ref{lem:feb12-1} follow essentially immediately. 
The bound (\ref{feb1224f}) on 
\[
\Pm\big(\sup_{0\le t\le T}|R(u_t)-R_0|>b\big)
\]
in 
Lemma~\ref{lem:feb12-1} is a simple consequence of Lemma~\ref{lem:18_2_3}.
The  finiteness of $\sup_{t\leq T} R(u_t)$ follows from~(\ref{feb1224f}). The corresponding bounds on $L(u_t)$
follow by repeating the arguments used in the proof of  
Lemmas~\ref{lem:feb19_1}--\ref{lem:18_2_3} for $1-u(-x)$ instead of~$u(x)$.  

\subsubsection*{Uniqueness of the solution  }

So far, we have shown that both $R(u_t)$ and $L(u_t)$ are $\Pm_u$-a.s. finite for any solution
to (\ref{eq:RFKPP-general})  taking values in $\hBI$ for all $t\ge 0$ such that
 the initial
condition~$u_0(x)$
satisfies (\ref{feb1206}).  As a consequence,  (\ref{feb1230}) holds for any such solution to (\ref{eq:RFKPP-general}).
As we have discussed in Section~\ref{sec:gir}, it follows that 
we may apply Girsanov's theorem to immediately deduce
uniqueness in law of the solution to~(\ref{eq:RFKPP-general}) that satisfies the above conditions. 

\subsection{Existence of the speed}

The last ingredient in the proof of Theorem~\ref{thm-feb2existence}
is the existence of the speed. 
\begin{lemma}\label{lem-feb1204}
There exists a deterministic constant $V(\sigma)\in(-\infty,+\infty)$ 
so that the limit
\begin{equation}\label{eb1234}
V(\sigma)=\lim_{t\to+\infty}\farc{R(u_t)}{t}
\end{equation}
exists almost surely.
\end{lemma}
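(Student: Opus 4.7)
The plan is to apply Kingman's subadditive ergodic theorem to $X_t=R(u_t)$, after reducing to a Heaviside-type initial condition. Let $H_a(x)=\1_{\{x\le a\}}$ and write $u^{H_a}$ for the corresponding solution. Since $H_{L_0}\le u_0\le H_{R_0}$ pointwise, the standard monotone coupling of solutions to \eqref{eq:RFKPP-general} (valid in our setting via the Lipschitz approximation $(f_n,a_n)$ used in the existence proof, where monotonicity is classical \cite{shi94,mue91s}, combined with the uniqueness in law just established) gives $R(u_t^{H_{L_0}})\le R(u_t)\le R(u_t^{H_{R_0}})$; spatial translation invariance of the white noise then yields $R(u_t^{H_a})\stackrel{d}{=}R(u_t^{H_0})+a$, so it suffices to prove \eqref{eb1234} for $u_0=H:=H_0$. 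Write $X_t=R(u_t)$ for this distinguished solution.

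The core of the argument is a subadditive inequality. Fix $s\ge 0$; the shifted process $r\mapsto u_{s+r}$ solves \eqref{eq:RFKPP-general} with initial condition $u_s$ and is driven by the $\mathcal{F}_s$-independent white noise $W^{(s)}(r,\cdot)=W(s+r,\cdot)-W(s,\cdot)$. Since $u_s\le H(\cdot-X_s)$ pointwise, the comparison principle followed by a spatial translation by $-X_s$ (which preserves the conditional law of $W^{(s)}$ given $\mathcal{F}_s$ by spatial translation invariance of white noise) gives
\[
X_{s+t}\;\le\;X_s+\tilde X_t^{(s)},
\]
where $\tilde X_t^{(s)}\stackrel{d}{=}X_t$ and $\tilde X_t^{(s)}$ is independent of $\mathcal{F}_s$. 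Iterating along integer times and noting that $\E[|X_1|]<\infty$ (a consequence of Lemma~\ref{lem:feb12-1}), this places us in the framework of Kingman's subadditive ergodic theorem, realized on an enlarged probability space carrying the ergodic joint time-and-space shifts of the white noise: the theorem yields $X_n/n\to V$ a.s.\ for some deterministic $V\in[-\infty,\infty)$. To pass from integer to continuous time, Lemma~\ref{lem:feb12-1} together with stationarity of noise increments gives a uniform-in-$n$ Gaussian tail bound on $\sup_{r\in[0,1]}|X_{n+r}-X_n|$, so Borel--Cantelli forces $X_t/t\to V$ a.s.

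The main remaining obstacle is ruling out $V=-\infty$. For this I use the symmetry argument already invoked at the end of the proof of Lemma~\ref{lem:feb12-1}: the reflection-complement $\tilde u_t(x)=1-u_t(-x)$ satisfies an SPDE of the form \eqref{eq:RFKPP-general} with nonlinearity $\tilde f(v)=-f(1-v)$ (satisfying the same bound as $f$, as implicitly required for the symmetric control of $L(u_t)$), and $R(\tilde u_t)=-L(u_t)$. Applying the preceding Kingman argument to $\tilde u$ yields $-L(u_t)/t\to\tilde V\in[-\infty,\infty)$ a.s. Since $L(u_t)\le R(u_t)$ always, this forces $V\ge -\tilde V>-\infty$, so $V\in\R$ and the proof is complete.
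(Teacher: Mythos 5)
Your proof is correct and follows essentially the same route as the paper's: the subadditive ergodic theorem applied to $R(u_m)$ at integer times (with the reduction to Heaviside data and the restart/translation coupling that the paper delegates to Lemma~5.1 of \cite{cd05}), the tail bound from Lemma~\ref{lem:feb12-1} plus Borel--Cantelli to interpolate to continuous time, and the reflection $1-u_t(-x)$ combined with $L(u_t)\le R(u_t)$ to exclude $V=-\infty$. You simply spell out the details the paper outsources to \cite{cd05}.
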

\begin{proof}
The proof goes along the lines of the proof of the corresponding result in~\cite{cd05}. 
First, we show that the  limit $V(\sigma)$  in~\eqref{eb1234} exists and 
 $V(\sigma)<\infty$. Let us
set $b(m)=R(u_m)$, for~$m=0,1,2,\ldots$, and note that by Corollary~\ref{cor-feb1208} 
we have 
\begin{equation}
\E\left[ (b(1)-b(0))_+\right]<\infty. 
\end{equation}
Then, as in the proof of Lemma~5.1 in~\cite{cd05} we can use the subadditive ergodic theorem to deduce that there exists a constant
$c(\sigma)\in [-\infty,\infty)$, such that 
\begin{equation}\label{eeq:12_6}
\lim_{m\to+\infty}\farc{b(m)}{m}=c(\sigma). 
\end{equation}
Using Lemma~\ref{lem:feb12-1},  we get  (see  Lemma~5.3 in~\cite{cd05} for the same argument) that for all $m=1,2,\ldots$
\begin{equation}
\Pm\Big( \sup_{0\leq s\leq 1}\left\{b(s+m)-b(m), b(m+1)-b(s+m)\right\} > \sqrt{m}\Big) \leq C(\sigma) \exp( -m/50). 
\end{equation}
Then  by the Borel-Cantelli lemma we get that in fact, 
\begin{equation}\label{eeq:12_7}
\lim_{t\to+\infty}\farc{b(t)}{t}=c(\sigma). 
\end{equation}
and thus $V(\sigma)=c(\sigma)<\infty$. 

To show that  $V(\sigma)>-\infty$, one needs to consider equation for $1-u_t(-x)$ and repeat the above argument. 
 
\end{proof}

\section{The interface in the voter model}\label{sec:voter}

Girsanov's theorem  connecting solutions to the rescaled
equation (\ref{eq:spde-v_f}) and to the voter model~(\ref{eq:spde-w}) not only allows us to 
deduce uniqueness in the law for the solutions to the former problem but also obtain the asymptotics
on their front speed in Theorem~\ref{th:2}.
As a preliminary step, in this section, we make some
observations about the latter. 
To begin, we rephrase  Lemma~4.2(a)  of~\cite{tri95}, putting it into a form more directly useful for our 
purposes.  Let $w_t(x)$ be the solution to~\eqref{eq:spde-w} with an initial 
condition~$w_0(x)$ satisfying (\ref{feb1206}). Recall that we denote by
$\Pm_w$  the measure induced on the canonical 
path space~$C([0,+\infty);C(\R))$  
by~$w$, and by $\E_w$ we denote the corresponding expectation.
Recall that two random processes $X_t$ and $Y_t$ are said to be coupled
if they can be defined on the same probability space. We assume throughout 
the rest of the paper that $f$ satisfies assumption~(\ref{feb1202a}). 
\begin{lemma}
\label{lemma:tri95}
Given $\varepsilon>0$, there exists $T_\eps>0$ such that for 
all $T\geq T_\eps$ there is a coupling of processes~$(w_t,B_t:t\geq0)$ 
where $B$ a standard Brownian motion started at 0, such that
\[
\Pm_w\Big(\sup_{0\leq t\leq T}\big|R(w_{t})-B_t\big|
 \vee \big|L(w_{t})-B_t\big|\geq T^{1/2}\varepsilon\Big)
\leq\varepsilon.  
\]
\end{lemma}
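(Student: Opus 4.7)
The plan is to translate Lemma~4.2(a) of~\cite{tri95} into the desired coupling statement, combining it with the tight interface width for the voter model~(\ref{eq:spde-w}) coming from~\cite{mt97}. First I would recall that Tribe's result provides a functional central limit theorem for the right edge of $w$: the rescaled process $t\mapsto T^{-1/2}R(w_{Tt})$, viewed as an element of $C([0,1])$, converges weakly as $T\to+\infty$ to a standard Brownian motion. The underlying reason is that $R(w_t)$ is, up to a negligible boundary correction, a continuous martingale whose quadratic variation grows linearly with unit rate, owing to the normalization~(\ref{eq:1_5}).

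Next, to convert weak convergence into a coupling with sup-norm control, I would invoke the Skorokhod representation theorem on $C([0,1])$: for each large $T$, realize the laws of $T^{-1/2}R(w_{Tt})$ and of the limiting standard Brownian motion $B^{(0)}$ on a common probability space so that
\[
\sup_{0\leq t\leq 1}\big|T^{-1/2}R(w_{Tt})-B^{(0)}_t\big|\to 0
\]
in probability as $T\to+\infty$. Choosing $T_\eps$ large enough that this sup-norm distance is smaller than $\eps/2$ with probability at least $1-\eps/2$, and then rescaling via $B_t:=T^{1/2}B^{(0)}_{t/T}$ (which is again a standard Brownian motion, now on $[0,T]$), one obtains the desired control on $R(w_t)$.

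Finally, I would handle the left edge $L(w_t)$ using Theorem~1 of~\cite{mt97}, which tells us that $w_t(\cdot-R(w_t))$ converges in law to a stationary distribution. Together with~(\ref{eq:1_5}) this yields tightness of the interface width $R(w_t)-L(w_t)$. Upgrading tightness to the uniform estimate $\sup_{0\leq t\leq T}\big(R(w_t)-L(w_t)\big)\le \eps T^{1/2}/2$ with probability $\geq 1-\eps/2$ (for $T$ large enough), which I would extract from the exponential tail bounds for the interface width available in~\cite{tri95,mt97} together with a union bound over $O(T)$ unit time intervals, then allows $L(w_t)$ to be controlled by the same Brownian motion $B$ that was used for $R(w_t)$.

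The step I expect to be the main obstacle is precisely this joint-coupling step: Tribe's lemma is formulated for a single edge, while the statement here demands one Brownian motion $B$ simultaneously tracking both $R(w_t)$ and $L(w_t)$ uniformly on $[0,T]$. Getting a genuinely uniform bound $\sup_{0\leq t\leq T}(R(w_t)-L(w_t))=o(T^{1/2})$ goes beyond one-time tightness of the stationary interface width, and needs the quantitative tail estimates from~\cite{tri95,mt97}. Once that uniform bound is in hand, the remaining argument is a soft Skorokhod-representation-plus-rescaling coupling.
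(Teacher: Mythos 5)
The paper does not actually prove this lemma: it is stated as a direct rephrasing of Lemma~4.2(a) of Tribe~\cite{tri95}, whose content is precisely the joint convergence of the two rescaled edges to the \emph{same} Brownian motion, i.e.\ the one-edge CLT \emph{together with} the collapse of the rescaled interface width. Your reduction --- a functional CLT for $R$ alone, a Strassen/Skorokhod coupling, and a uniform width bound $\sup_{0\le t\le T}(R(w_t)-L(w_t))\le \eps T^{1/2}/2$ to drag $L$ along --- is structurally reasonable, and the coupling step is indeed soft. You correctly identify the width bound as the crux, but your proposed justification of it fails. There are no exponential tail bounds for the interface width: Theorem~\ref{th:mt-th1} (Mueller--Tribe) says the stationary width has finite moments only of order $p<1$ and \emph{infinite} mean, so its tail decays no faster than roughly $x^{-1}$. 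With such tails your union bound over $O(T)$ unit time intervals gives $O(T)\cdot O\big(1/(\eps T^{1/2})\big)=O(T^{1/2}/\eps)$, which diverges. One-time (or stationary) tail estimates plus a union bound cannot produce $\sup_{0\le t\le T}(R(w_t)-L(w_t))=o(T^{1/2})$ in probability; that width-collapse statement is true, but it is exactly the hard part of Tribe's Lemma~4.2(a), and it exploits the temporal structure of the width process rather than marginal tails.

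A secondary imprecision: $R(w_t)$ is not ``up to a negligible correction a continuous martingale.'' The martingale is the signed-area functional $\Xi(w_t)$ of \eqref{eq:def-2}, which satisfies $\Xi(w_t)=\Xi(w_0)+M_t$ (see \eqref{eq:SDE-Xi}) and is only \emph{sandwiched} between the edges, $L(w_t)\le\Xi(w_t)\le R(w_t)$ (see \eqref{eq:L-Xi-R}). Identifying $R$ (or $L$) with this martingale on the scale $T^{1/2}$ already requires the width collapse, so even the one-edge CLT you take as your starting point secretly contains the step your argument leaves unproved. To make this rigorous you must either import Tribe's Lemma~4.2(a) as a black box (as the paper does) or reproduce his argument for the uniform smallness of the rescaled width.
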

The following lemma shows that another good measure of the location of the
interface is  
\begin{align}
M_t:= \int_0^t \int_{{\R}}\sqrt{ w_s(x)(1-w_s(x))}W(dx,ds).\;\; 
\end{align}

\begin{lemma}
\label{lemma:12_1}
Let $B$ be the Brownian motion from Lemma~\ref{lemma:tri95}. 
Given $\varepsilon>0$, there exists $T_\eps>0$ such that for 
all $T\geq T_\eps$ we have
\[
\Pm_w\Big(\sup_{0\leq t\leq T}\big|M_t-B_t\big|
  \geq 4 T^{1/2}\varepsilon\Big)
\leq\varepsilon.  
\]
\end{lemma}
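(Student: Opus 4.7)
The plan is to realize the martingale $M_t$ as a purely geometric functional of $w$, namely the signed area
\[
P_t:=\int_\R(w_t(x)-w_0(x))\,dx,
\]
and then compare $P_t$ to $B_t$ using Lemma~\ref{lemma:tri95}. Although $w_t(x)\to 1$ at $-\infty$, the difference $w_t-w_0$ is bounded by $1$ in absolute value and vanishes outside $[\min(L_0,L(w_t)),\max(R_0,R(w_t))]$, which is $\Pm_w$-a.s.\ a bounded interval by Lemma~\ref{lem:feb12-1} (applied to \eqref{eq:spde-w}, i.e.\ with $f=0$). Hence $P_t$ is $\Pm_w$-a.s.\ finite.

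To show $M_t=P_t$ almost surely, I would integrate the mild form
\[
w_t(x)-G_tw_0(x)=\int_0^t\!\!\int_\R G_{t-s}(x-y)\sqrt{w_s(y)(1-w_s(y))}\,W(dy,ds)
\]
in $x$. A stochastic Fubini exchange, legitimate because $\sqrt{w_s(1-w_s)}$ has bounded random support and $\int_\R G_{t-s}(x-y)\,dx=1$, gives $\int_\R(w_t-G_tw_0)(x)\,dx=M_t$. For the deterministic piece $\int_\R(G_tw_0-w_0)\,dx$ I would split $w_0=\1_{(-\infty,L_0)}+(w_0-\1_{(-\infty,L_0)})$: the second summand is compactly supported, so ordinary Fubini shows its contribution is $0$, while for the first summand the function $G_t\1_{(-\infty,L_0)}-\1_{(-\infty,L_0)}$ is integrable (Gaussian tails) and antisymmetric about $L_0$, so its integral also vanishes. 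Therefore $M_t=P_t$.

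For the comparison with $B_t$, I would write $w_t(x)=\1_{x<L(w_t)}+g_t(x)$ with $g_t$ supported in $[L(w_t),R(w_t)]$ and $0\le g_t\le 1$, and similarly for $w_0$. Direct bookkeeping then gives
\[
\bigl|P_t-(L(w_t)-L_0)\bigr|\le (R(w_t)-L(w_t))+(R_0-L_0),
\]
and consequently
\[
|M_t-B_t|\le|L(w_t)-B_t|+|L_0|+(R(w_t)-L(w_t))+(R_0-L_0).
\]
Lemma~\ref{lemma:tri95} supplies, on an event of $\Pm_w$-probability at least $1-\varepsilon$, the bounds $\sup_{t\le T}|L(w_t)-B_t|\le T^{1/2}\varepsilon$ and $\sup_{t\le T}|R(w_t)-B_t|\le T^{1/2}\varepsilon$; a triangle inequality between these two already forces $\sup_{t\le T}(R(w_t)-L(w_t))\le 2T^{1/2}\varepsilon$ on the same event. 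Choosing $T_\varepsilon$ large enough that $|L_0|+(R_0-L_0)\le T^{1/2}\varepsilon$ for all $T\ge T_\varepsilon$ then yields $\sup_{t\le T}|M_t-B_t|\le 4T^{1/2}\varepsilon$, as required. The only subtlety I anticipate is the stochastic Fubini step, since $w_0$ is not integrable; but the $\Pm_w$-a.s.\ boundedness of the support of $w_s-w_0$ supplied by Lemma~\ref{lem:feb12-1} reduces this to ordinary Fubini together with the Gaussian-tail antisymmetry computation above.
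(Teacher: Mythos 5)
Your proof is correct and follows essentially the same route as the paper: your $P_t$ is exactly the paper's $\Xi(w_t)-\Xi(w_0)$, the identity $M_t=P_t$ is the paper's \eqref{eq:SDE-Xi} (obtained there via the weak form with cutoffs $\theta_n$ rather than your mild-form stochastic Fubini with localization), and the final sandwich between $L(w_t)$ and $R(w_t)$ combined with Lemma~\ref{lemma:tri95} is the same argument.
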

\begin{proof}
By Lemma \ref{lemma:tri95},  
\begin{equation}
\label{eq:def-2}
\Xi(w_t):=\int_{-\infty}^{0}\big[w_t(x)-1\big]dx+\int_{0}^{\infty}w_t(x)dx.
\end{equation}
is an almost surely finite  
functional of $w_t$.
As $w_t(x)=1$ for $x<L(w_t)$ and~$w_t(x)=0$ for~$x>R(w_t)$, we have
\[
\Xi(w_t)=\int_{L(w_t)\wedge0}^{0}[w_t(x)-1]dx+\int_{0}^{R(w_t)\vee0}w_t(x)dx,
\]
thus
\begin{align*}
L(w_t)&=\int_{L(w_t)\wedge0}^{0}[-1]dx+\int_{0}^{L(w_t)\vee0}dx
\leq\int_{L(w_t)\wedge0}^{0}[w_t(x)-1]dx+\int_{0}^{R(w_t)\vee0}w_t(x)dx=\Xi(w_t),
\end{align*}
and likewise
\begin{align*}
R(w_t)&=\int_{R(w_t)\wedge0}^{0}[-1]dx+\int_{0}^{R(w_t)\vee0}dx
\geq\int_{L(w_t)\wedge0}^{0}[w_t(x)-1]dx+\int_{0}^{R(w_t)\vee0}w_t(x)dx=\Xi(w_t).
\end{align*}
We conclude that
\begin{equation}
\label{eq:L-Xi-R}
L(w_t))\leq\Xi(w_t)\leq R(w_t).
\end{equation}

Next, let $\theta(x)$ be a smooth monotonically decreasing function such 
that $\theta(x)=1$ for $x<-2$ and~$\theta(x)=0$ for $x>-1$, and 
set $\theta_n(x)=\theta(nx)$. Then for
\[
\zeta_n(x):=w_t(x)-\theta_n(x)
\]
we have
\[
\Xi(w_t)=\lim_{n\to\infty}\Xi_n(t),~~\Xi_n(t)=\int_{-\infty}^\infty \zeta_n(x)dx.
\]
The function $\zeta_n(t,x)$ satisfies
\begin{equation}
\label{eq:spde-zeta}
\partial_t\zeta_n=\frac{1}{2}\partial_x^2\zeta_n+\frac{1}{2}\partial_x^2\theta_n
+\sqrt{w(1-w)}\dot{W}(t,x).
\end{equation}
Integrating in $t$ and $x$ gives
\begin{equation}
\label{eq:SDE-Xin}
\Xi_n(w_t)=\Xi_n(w_0)+\int_{0}^{t}\int_{{\R}}\sqrt{w_s(y)(1-w_s(y))}W(dyds).
\end{equation}
Passing to the limit $n\to+\infty$, we arrive at
\begin{equation}
\label{eq:SDE-Xi}
\Xi(w_t)=\Xi(w_0)+\int_{0}^{t}\int_{{\R}}\sqrt{w_s(y)(1-w_s(y))}W(dyds)
=\Xi(w_0)+M_t.
\end{equation}
As $\Xi(w_0)<+\infty$ and is not random, 
the conclusion of the present lemma follows from Lemma~\ref{lemma:tri95} by 
taking~$T_\eps$ 
sufficiently large. 
\end{proof}

For any metric space $\bE$, we denote by $\bD_{\bE}$ the space of c\`{a}dl\`{a}g functions $[0,\infty)\to \bE$ equipped with the Skorohod topology. Define
the rescaled functionals 
\[
L_t^a=\frac{1}{a}L{(w_{a^2t})},~R_t^a=\frac{1}{a}R{(w_{a^2t})},~~M_t^a=\farc{1}{a}M_{a^2t}\,.
\]
As a consequence of Lemmas~\ref{lemma:tri95} and~\ref{lemma:12_1}, we conclude
that 
\[
(L^a, R^a, M^a) \Rightarrow (B,B,B)\; \text{in} \; \bD_{\R^3},\;\; \text{as}\; a\rightarrow\infty,
\]
where $B$ is a standard Brownian motion starting at $0$ and $\Rightarrow$ denotes convergence in law. 

As in the application of the Girsanov theorem
in the proof of Theorem~\ref{thm-feb2existence},
we will make use of the functionals
\begin{align}
A^f_t&:= 
\int_0^t \int_{{\R}}\frac{f(w_s(x))^2}{w_s(x)(1-w_s(x))}\,dx\,ds,\;\; \\
M^f_t&:= \int_0^t \int_{{\R}}\frac{f(w_s(x))}{\sqrt{ w_s(x)(1-w_s(x))}}W(dx,ds),\end{align}
and their rescaled versions
\[
M^{f,a}_t= \frac{1}{a}M_{a^2 t},~~   A^{f,a}_{t}= \frac{1}{a^2}A^f_{a^2t},~~a>0. 
\]
The difference in the scaling of these two functionals comes from the fact that
$M_t$ is, roughly, a Brownian motion on large time scales, 
and $A_t$ is deterministic to the leading order on large time scales. Note that both $A_t$ and $M_t$ are almost surely
finite if $f$ satisfies assumption (\ref{feb1202}), 
since the interface of $w_t$ has a finite length almost surely. However, we will need the stronger assumption~(\ref{feb1202a})
in Lemma~\ref{lem6_1} below. 

Let us now recall Theorem 1 of  \cite{mt97}.
\begin{theorem}[\cite{mt97}]
\label{th:mt-th1}
There exists a unique stationary measure $\mu$ on $\mathcal{C}_I$ for (\ref{eq:spde-w}).
Furthermore, for each $u_0\in\mathcal{C}_I$,  the law of 
$w_t(x+L_t)$ converges in total variation to $\mu$ as
$t\to\infty$.   In addition, the moment of the width of the interface
$\E_{w,st}[R(f)-L(f)]^p\mu(df)$ is finite if $0\leq p<1$, 
and infinite for~$p\geq1$.  
\end{theorem}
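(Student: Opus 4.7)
The plan is to prove the three assertions --- existence of $\mu$, uniqueness with total variation convergence, and the critical moment threshold $p=1$ --- by leveraging the classical \emph{moment duality} between (\ref{eq:spde-w}) and a system of coalescing Brownian motions: for any $x_1,\dots,x_n\in\R$,
\begin{equation*}
\E\big[\textstyle\prod_i w_t(x_i)\big] \;=\; \E\big[\textstyle\prod_j w_0(\xi_j(t))\big],
\end{equation*}
where the $\xi_j$ evolve as independent Brownian motions and coalesce upon collision. Together with the interface-width control of Lemma~\ref{lemma:tri95}, this duality underlies each of the three steps.

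\textbf{Existence.} First I would show tightness of $\{\mathrm{Law}(w_t(\cdot + L_t))\}_{t\geq 0}$ on $\mathcal{C}_I$ in the locally-uniform topology. The ingredients are: the a priori bound $0\le w\le 1$; spatial and temporal H\"older continuity of $w$ derived by Burkholder--Davis--Gundy estimates analogous to Lemma~\ref{lem:feb19_1} with $f\equiv 0$; and uniform-in-$t$ finiteness of the interface width $R(w_t)-L(w_t)$, which follows from Corollary~\ref{cor-feb1208} specialized to $f=0$ combined with Lemma~\ref{lemma:tri95}. A standard Krylov--Bogoliubov Ces\`aro averaging then yields an invariant measure $\mu$ on $\mathcal{C}_I$.

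\textbf{Uniqueness and total variation convergence.} Couple two solutions $w^{(1)},w^{(2)}$ from distinct initial data in $\mathcal{C}_I$ through the same white noise. For any test points $y_1,\dots,y_n$ in the centered frame, duality expresses the joint law of $\bigl(w_t^{(i)}(y_j+L_t^{(i)})\bigr)_{j}$ in terms of $\E[\prod_j w_0^{(i)}(\xi_j(t)+L_t^{(i)})]$ over coalescing Brownian particles. As $t\to\infty$ these particles coagulate and spread on the $\sqrt{t}$ scale, so with probability tending to $1$ they all lie in a single region where each $w_0^{(i)}$ is constantly $0$ or $1$; since the two initial data agree off a compact set, the corresponding values coincide across $i=1,2$. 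This delivers weak convergence of centered finite-dimensional laws to a common limit, giving uniqueness of $\mu$. Upgrading weak to total variation convergence is the main obstacle: I would run a \emph{backward} coalescing lineage system from a countable dense set in a given compact window and argue that these lineages, on an event of probability tending to $1$, return a single coloring consistent with both $w_0^{(i)}$, which furnishes a direct coupling of the two restricted laws agreeing with probability $\to 1$.

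\textbf{Moments of the width.} By stationarity and Fubini, finiteness of $\int(R-L)^p\,d\mu$ for $p<1$ reduces to a tail bound of the form $\mu\{R-L > x\}\le C/x$. I would derive this from the dual representation of $\E_\mu[w(0)(1-w(x))]$: this quantity is controlled by the probability that two coalescing Brownian motions started at $0$ and $x$ remain uncoalesced long enough to sample differently colored regions of the initial configuration, which by standard one-dimensional Brownian estimates on the stationary scale is of order $1/x$; a covering argument then translates this into the interface tail bound. The matching lower bound $\mu\{R-L > x\}\ge c/x$, proved by reversing the same computation, forces $\int(R-L)\,d\mu = \infty$ and hence divergence of all $p\ge 1$ moments. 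The critical exponent $p=1$ is thus a direct reflection of the $1/x$ tail, which in turn originates in the diffusive Brownian coalescence scaling $t=x^2$.
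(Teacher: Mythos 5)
The paper does not prove this statement at all: it is Theorem~1 of Mueller and Tribe \cite{mt97}, imported verbatim as an external input (hence the citation in the theorem header), so your sketch can only be measured against the known argument in \cite{mt97}. Your overall architecture --- moment duality plus interface-width control --- is indeed the right engine, but there are concrete gaps. First, the dual is misstated: for $\partial_t w=\tfrac12\partial_x^2w+\sqrt{w(1-w)}\dot W$ the dual particles coalesce at a \emph{finite rate} governed by pairwise intersection local time, not instantaneously upon collision. In one dimension two independent Brownian paths meet and then collide on a set of times accumulating at the meeting time, so instantaneous coalescence would collapse the dual to one particle immediately and the duality identity would be false; your quantitative claims (the $1/x$ two-point decay, the ``coloring'' picture) depend on which dual is used. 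Second, the existence step needs uniform-in-$t$ stochastic boundedness of $R(w_t)-L(w_t)$, and the ingredients you cite do not deliver it: Corollary~\ref{cor-feb1208} controls $\E[\sup_{t\le T}|R-L|]$ for each \emph{fixed} $T$ only, and Lemma~\ref{lemma:tri95} bounds the width by $2\eps T^{1/2}$ with probability $1-\eps$, a bound growing with $T$. Uniform tightness of the centered laws is precisely the main theorem of Tribe \cite{tri95} and is the hard part; it cannot be recovered from the fixed-$T$ estimates you quote.

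The most serious gap is in the total-variation step. Running ``backward coalescing lineages'' that ``return a single coloring consistent with both $w_0^{(i)}$'' presupposes a pathwise genealogical representation of the solution itself; this SPDE admits only the \emph{moment} duality, which determines the law of $(w_t(x_1),\dots,w_t(x_n))$ but does not furnish a coupling of two solutions, and the random recentering by $L_t^{(i)}$ is not compatible with the duality as written. The argument in \cite{mt97} builds the coupling differently, exploiting that each solution is identically $0$ or $1$ off a compact interval so that two interfaces can be made to agree exactly after a shift on an event of high probability. Finally, in the moment step the passage from a $1/x$ bound on a two-point function to a two-sided estimate on $\mu\{R-L>x\}$ is asserted rather than proved; an interface can be wide while $w(0)(1-w(x))$ is small, and the matching lower bound --- which is what forces $\E_{w,st}[R-L]=\infty$ and hence divergence for all $p\ge1$ --- is the delicate direction.
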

The following estimate is a consequence of the second part of Theorem~\ref{th:mt-th1}.
\begin{lemma}
\label{lem:feb19_2}
For any $\eta\in (0,1]$, we have
\begin{equation}
\label{eq:feb19_1}
\E_{w,st}\left[\int_{{\R}}(w(x)(1-w(x)))^\eta\,dx\right]<\infty.
\end{equation}
\end{lemma}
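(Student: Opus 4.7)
The plan is to combine three ingredients: the $L^1$ identity (\ref{eq:1_5}) transferred to the stationary measure $\mu$ via Theorem~\ref{th:mt-th1}; Jensen's inequality on the interval $[L(w),R(w)]$; and the width moment bound $\E_{w,st}[(R(w)-L(w))^p]<\infty$ for $p\in[0,1)$ from the second part of Theorem~\ref{th:mt-th1}. I would first handle the case $\eta=1$. The functional $F_1(w):=\int_\R w(1-w)\,dx$ is translation invariant, so $F_1(w_t)=F_1(w_t(\cdot+L_t))$; by Theorem~\ref{th:mt-th1} the law of $w_t(\cdot+L_t)$ converges to $\mu$ in total variation, hence for each $K>0$ the bounded functional $F_1\wedge K$ satisfies $\E_\mu[F_1\wedge K]=\lim_{t\to\infty}\E[F_1(w_t)\wedge K]\leq 1$ by (\ref{eq:1_5}), and monotone convergence in $K$ yields $\E_\mu[F_1]\leq 1<\infty$.

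For $\eta\in(0,1)$ I would apply Jensen's inequality (concavity of $u\mapsto u^\eta$) on the probability measure $dx/(R(w)-L(w))$ over $[L(w),R(w)]$ to obtain the deterministic bound
\begin{equation*}
\int_\R (w(1-w))^\eta\,dx \leq (R(w)-L(w))^{1-\eta}\Bigl(\int_\R w(1-w)\,dx\Bigr)^\eta.
\end{equation*}
Taking expectation and applying H\"older's inequality with exponents $a\in(1,\,1/(1-\eta))$ and $b=a/(a-1)$ gives
\begin{equation*}
\E_\mu\Bigl[\int_\R (w(1-w))^\eta\,dx\Bigr] \leq \E_\mu\bigl[(R-L)^{a(1-\eta)}\bigr]^{1/a}\,\E_\mu\bigl[F_1(w)^{b\eta}\bigr]^{1/b}.
\end{equation*}
The first factor is finite by Theorem~\ref{th:mt-th1} since $a(1-\eta)<1$. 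Letting $a\uparrow 1/(1-\eta)$ makes $b\downarrow 1/\eta$ and hence $b\eta\downarrow 1$, so the argument reduces to showing $\E_\mu[F_1(w)^{1+\delta}]<\infty$ for some $\delta>0$.

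The main obstacle lies precisely in this higher-moment bound on $F_1$: it does not follow from (\ref{eq:1_5}) or from the naive pointwise bound $F_1\leq (R-L)/4$, which only yields moments of order strictly less than one. To close it, I would apply It\^o's formula to the SPDE (\ref{eq:spde-w}) and derive the evolution
\begin{equation*}
dF_1(w_t) = \bigl[D(w_t)-F_1(w_t)\bigr]\,dt + dN_t,
\end{equation*}
where $D(w_t)=\int(\partial_x w_t)^2\,dx$ arises from integration by parts of $\int(1-2w_t)\partial_x^2 w_t\,dx$, and $N_t$ is a martingale with $d[N]_t\leq F_1(w_t)\,dt$. A Gronwall-type moment estimate for $\E[F_1(w_t)^{1+\delta}]$, combined with the stationary identity $\E_\mu[D]=\E_\mu[F_1]$ forced by $dF_1=dN_t+(D-F_1)dt$, and the BDG inequality applied to $N_t$, should produce the desired finite $(1+\delta)$-moment under $\mu$. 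Total-variation convergence of $w_t(\cdot+L_t)$ to $\mu$ (with a truncation/monotone-convergence argument as in the $\eta=1$ step) then transfers this moment bound to the stationary measure, closing the proof for all $\eta\in(0,1]$.
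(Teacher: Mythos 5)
Your first two steps (the $\eta=1$ case via total--variation convergence, and the pointwise bound $\int_\R (w(1-w))^\eta\,dx \le (R-L)^{1-\eta}\big(\int_\R w(1-w)\,dx\big)^\eta$ followed by splitting the product) mirror the paper's proof, which uses Young's inequality with $\alpha=2/\eta$ where you use H\"older; that difference is cosmetic. You have also correctly identified the crux: one needs a moment of $F_1(w)=\int_\R w(1-w)\,dx$ of order strictly greater than $1$ under the stationary measure, since the width bound from Theorem~\ref{th:mt-th1} only covers exponents below $1$. The paper closes this by simply invoking Lemma~2.1(d) of Tribe \cite{tri95}, which gives $\E_{w,st}\big[\big(\int_\R w(1-w)\,dx\big)^{2}\big]<\infty$ directly (that second moment also lets the paper take the fixed choice $\alpha=2/\eta$ rather than a limiting exponent).

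Your proposed substitute for that input is where the argument breaks. Solutions of \eqref{eq:spde-w} are only H\"older continuous of exponent less than $1/2$ in space, so $D(w_t)=\int(\partial_x w_t)^2\,dx$ is not a finite quantity, and the pointwise It\^o correction for $\int g(w_t(x))\,dx$ with $g(u)=u(1-u)$ is formally $-\delta_0(0)\int w(1-w)\,dx\,dt$, i.e.\ divergent: the evolution $dF_1=(D-F_1)\,dt+dN_t$ is a formal cancellation of two infinities, and the stationary identity $\E_\mu[D]=\E_\mu[F_1]$ you want to extract from it would equate an infinite quantity with a finite one. Making such computations rigorous (via smoothing and duality) is precisely the content of Tribe's moment lemmas, so your route is circular in the best case and ill-defined as written. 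Moreover, even granting the formal SDE, the Gronwall/BDG step is not carried out: BDG bounds $\E[\sup_s|N_s|^{1+\delta}]$ by $\E\big[\big(\int_0^t F_1(w_s)\,ds\big)^{(1+\delta)/2}\big]$, which does not self-evidently close to a uniform-in-$t$ bound on $\E[F_1(w_t)^{1+\delta}]$. The clean fix is to replace this whole paragraph by the citation of Tribe's second-moment bound, after which your H\"older step (with any fixed admissible $a$, e.g.\ matching the paper's $\alpha=2/\eta$) completes the proof.
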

Note that this result fails at $\eta=0$: according to Theorem~\ref{th:mt-th1},
the length of the interface has an infinite expectation under the stationary distribution of $w$.

\begin{proof}
For $\eta=1$ the result is known (see Lemma~2.1(a) in~\cite{tri95}), so we assume that $\eta\in (0,1)$. 
Let $\ell$ be the length of the interface of $w$ under the stationary distribution. 
By applying H\"older's and Young's inequalities we get 
\[
\begin{aligned}
\E_{w,st}\Big[\int_{{\R}}(w(x)(1-w(x)))^\eta\,dx\Big]&\leq 
\E_{w,st}\Big[\Big(\int_{{\R}}(w(x)(1-w(x)))\,dx\Big)^\eta \ell^{1-\eta}\Big]\\
&\leq 
C_\alpha\E_{w,st}\Big[\Big(\int_{{\R}}(w(x)(1-w(x)))\,dx\Big)^{\alpha\eta}\Big] + 
C_\alpha\E_{w,st}\Big[ \ell^{\frac{\alpha(1-\eta)}{\alpha-1}}\Big],
\end{aligned}
\]
for any $\alpha>1$. We take $\alpha=2/\eta$ and  get 
\[
\E_{w,st}\Big[\int_{{\R}}(w(x)(1-w(x)))^\eta\,dx\Big]\leq 
C_\alpha\E_{w,st}\Big[\Big(\int_{{\R}}(w(x)(1-w(x)))\,dx\Big)^{2}\Big] +C_\alpha \E_{w,st}\big[ \ell^{\gamma}\big],
\]
with $\gamma={(1-\eta)}/{(1-\eta/2)}$.
Since $\gamma<1$,  by Theorem~\ref{th:mt-th1} we get $ \E_{w,st}[ \ell^\gamma] <\infty$. 
In addition, Lemma~2.1(d) in~\cite{tri95} implies that
\[
 \E_{w,st}\Big[\Big(\int_{{\R}}(w(x)(1-w(x)))\,dx\Big)^{2}\Big] <\infty,
 \]
and we are done. 
\end{proof}

\begin{lemma}
\label{lem6_1}
Let $f$ satisfy assumption~(\ref{feb1202a}), then we have convergence in law 
\begin{equation}
(M^{f,a}, A^{f,a}) \Rightarrow \{B^f_t,Dt),\; t\geq 0\},
\end{equation} 
in $\bD_{\R^2}$, as $a\rightarrow \infty.$  Here $\{B^f_t\,, t\geq 0\}$ is a 
Brownian motion with variance $D$ 
\begin{equation}\label{c2f-def}
D = \E_{w,st}\left[\int_{{\R}}\frac{f(w(x))^2}{w(x)(1-w(x))}\,dx\right]<\infty. 
\end{equation}
\end{lemma}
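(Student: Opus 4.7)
The plan is to exploit the fact that $M^f_t$ is the $L^2$ martingale whose quadratic variation is exactly $A^f_t$, so the scaled process $M^{f,a}$ is a continuous martingale with $\langle M^{f,a}\rangle_t=A^{f,a}_t$. Once we establish that $A^{f,a}_t\to Dt$ in probability for each $t\ge 0$, the martingale functional CLT (Rebolledo, or Theorem VIII.3.11 of Jacod--Shiryaev) delivers $M^{f,a}\Rightarrow B^f$ in $\bD_\R$, and since the limit of $A^{f,a}$ is deterministic, joint convergence in $\bD_{\R^2}$ follows automatically. The initial integrability $D<\infty$ is handled at the outset: assumption (\ref{feb1202a}) gives $f(w)^2/(w(1-w))\le \widetilde K_f^2 (w(1-w))^{2\gamma-1}$ with $2\gamma-1\in(0,1]$, so Lemma~\ref{lem:feb19_2} applied with $\eta=2\gamma-1$ yields $D<\infty$.

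The core of the argument is therefore the convergence $A^{f,a}_t\to Dt$ in probability. A change of variable $s=a^2 u$ writes it as
\[
A^{f,a}_t=\int_0^t\Phi(w_{a^2u})\,du,\qquad \Phi(h):=\int_{\R}\frac{f(h(x))^2}{h(x)(1-h(x))}\,dx,
\]
where $\Phi$ is a translation-invariant nonnegative functional. By Theorem~\ref{th:mt-th1}, the law of $w_s(\cdot+L_s)$ converges to $\mu$ in total variation as $s\to\infty$, and consequently $\Phi(w_s)$ converges in law to $\Phi(\cdot)$ under $\mu$, whose mean is $D$. Taking expectations under $\Pm_w$ and applying dominated convergence (using, as an $s$-uniform dominator, the supremum over $s\ge 1$ of $\E_w[\Phi(w_s)]$, which is finite by the same Lemma~\ref{lem:feb19_2} argument together with the bound on the interface length from Lemma~\ref{lem:feb12-1} applied to $w$), we obtain $\E[A^{f,a}_t]\to Dt$.

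To promote mean convergence to convergence in probability, the idea is a second moment estimate: one bounds
\[
\mathrm{Var}(A^{f,a}_t)=2\int_0^t\!\!\int_0^u\mathrm{Cov}\bigl(\Phi(w_{a^2u}),\Phi(w_{a^2v})\bigr)\,dv\,du,
\]
and shows this tends to $0$ as $a\to\infty$. Using the Markov property of $w$, the covariance at times $a^2v$ and $a^2u$ is controlled by the total-variation gap between the law of $w_{a^2(u-v)}$ conditioned on $w_{a^2v}$ and the stationary law $\mu$, which tends to $0$ by Theorem~\ref{th:mt-th1} whenever $a^2(u-v)\to\infty$; for $u-v$ in a vanishing neighbourhood of $0$ one uses uniform second-moment bounds on $\Phi(w_s)$ from Lemma~\ref{lem:feb19_2} and Cauchy--Schwarz. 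Together with the convergence of the mean, this gives $A^{f,a}_t\to Dt$ in $L^2$, hence in probability.

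The main obstacle I anticipate is exactly this concentration step, because a priori we only know that marginal distributions of $\Phi(w_s)$ converge to that of $\Phi$ under $\mu$, and translating convergence of marginal laws into convergence of time averages for the non-reversible, measure-valued process $w_s$ is the delicate point. If the direct covariance bound is uncomfortable, an equivalent route is to truncate $\Phi$ at level $N$, prove the ergodic averaging for the bounded truncation $\Phi_N$ via Theorem~\ref{th:mt-th1}, and control the tail $\Phi-\Phi_N$ uniformly in $a$ using $\sup_{s\ge 1}\E_w[\Phi(w_s)\1_{\Phi(w_s)>N}]\to 0$ as $N\to\infty$ (another consequence of Lemma~\ref{lem:feb19_2} plus uniform integrability). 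Either route closes the proof, and the martingale CLT then yields the stated joint convergence with limit $(B^f,D\,\cdot)$.
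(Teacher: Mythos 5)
Your overall architecture matches the paper's: the crux in both is the law of large numbers $A^{f,a}_t\to Dt$, after which the convergence of $M^{f,a}$ is soft, and the finiteness of $D$ comes from Lemma~\ref{lem:feb19_2} with $\eta=2\gamma-1$ exactly as you say. The two proofs diverge in the details. For the transfer step, the paper does not invoke the martingale functional CLT; it writes $M^{f,a}_t=\tilde B_{A^{f,a}_t}$ via the Dambis--Dubins--Schwarz time change and lets the convergence of the clock $A^{f,a}$ do all the work. That route is marginally cleaner than yours because the joint convergence of $(M^{f,a},A^{f,a})$ is then automatic and no Lindeberg-type condition needs checking, but your FCLT route is also standard and fine. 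For the crux itself, the paper simply appeals to the ergodic theorem for the Markov process $w_s(\cdot+L_s)$, which has a unique invariant measure $\mu$ by Theorem~\ref{th:mt-th1}; Birkhoff plus the total-variation convergence to $\mu$ from an arbitrary initial condition gives $\frac{1}{T}\int_0^T\Phi(w_s)\,ds\to\E_\mu[\Phi]$ almost surely, and this requires only $\Phi\in L^1(\mu)$, i.e.\ only $D<\infty$.

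This is where your proposal has a genuine soft spot. Your primary route goes through $\mathrm{Var}(A^{f,a}_t)$ and hence needs second moments of $\Phi(w_s)$, and you attribute "uniform second-moment bounds on $\Phi(w_s)$" to Lemma~\ref{lem:feb19_2}; that lemma controls only first moments under $\mu$, and second moments of $\Phi$ are genuinely problematic: any Cauchy--Schwarz bound on $\Phi^2=\bigl(\int_{\R}f(w)^2/(w(1-w))\,dx\bigr)^2$ brings in a factor of the interface length $\ell$, whose expectation under $\mu$ is infinite by the last assertion of Theorem~\ref{th:mt-th1}. Your fallback truncation route is the right instinct, but as written the tail estimate $\sup_{s\ge1}\E_w[\Phi(w_s)\1_{\Phi(w_s)>N}]\to0$ is justified by "Lemma~\ref{lem:feb19_2} plus uniform integrability," which is circular --- uniform integrability is precisely what is being claimed, and a first-moment bound does not imply it. The clean fix is to abandon the $L^2$ concentration altogether and argue as the paper does: uniqueness of the invariant measure makes $\mu$ ergodic, Birkhoff gives the a.s.\ convergence of the time average for $\mu$-a.e.\ starting configuration, and the total-variation convergence of Theorem~\ref{th:mt-th1} couples the process started from $w_0$ to a stationary copy after a finite time, transferring the a.s.\ convergence to the given initial condition with no moment assumptions beyond $D<\infty$.
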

Note that $D<+\infty$ because of Lemma~\ref{lem:feb19_2} and assumption (\ref{feb1202a}) on $f$. 
\begin{proof}
Since $w$ has a unique stationary distribution on the space $\mathcal{C}_I$
of continuous functions $h$ such that $-\infty<L(h)<R(h)<+\infty$, 
by the ergodic theorem we have 
\begin{equation}
\label{6_2}
\lim_{a\rightarrow\infty} a^{-2} A^{f}_{a^2t} = 
t \E_{w,st}\left[\int_{{\R}}\frac{f(w(x))^2}{w(1-w(x))}\,dx\right]=Dt, 
\end{equation}
uniformly on compact sets in $t$.
Recall that $\E_{w,st}$ denotes the expectation with respect to the
stationary measure of $w$ on $\mathcal{C}_I$.
Since
\begin{equation}
M^{f,a}_t=\tilde B_{A^{f,a}_t},\;t\geq 0,
\end{equation}
for some standard Brownian motion $\tilde B$, it follows from (\ref{6_2}) that 
\begin{equation}
\label{16_12_1}
M^{f,a}_{\cdot} \Rightarrow   \{ B^f_t, t\geq 0\} :=\{\tilde B_{Dt}, t\geq 0\},
\end{equation}
where $\tilde B_{Dt}$ is 
a Brownian motion with variance 
$D$. 
\end{proof}
 Define
\begin{align}
A_t&:= 
\int_0^t \int_{{\R}}w_s(x)(1-w_s(x))\,dx\,ds,\end{align}
and its rescaled version
\[
A^{a}_{t}= \frac{1}{a^2}A_{a^2t},~~a>0. 
\]
\begin{cor} We have convergence in law
\label{cor2}
\begin{equation}
(L^a,R^a, M^a, A^a,M^{f,a}, A^{f,a}) \Rightarrow 
\{(B_t,B_t, B_t,t,B^f_t, Dt),\; t\geq 0\},
\end{equation} 
in $\bD_{\R^5}$, as $a\rightarrow \infty.$  
Here, $B_t$ is a standard Brownian motion, $B_t^f$ is a Brownian motion  
with variance~$D$
and their correlation is given by 
\begin{equation}
\langle B_\cdot, B^f_\cdot\rangle_t = c_f t, \;t\geq 0,
\end{equation}
with $c_f$ as in (\ref{eq:def-c-sub-f}). 
\end{cor}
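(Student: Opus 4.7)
The plan is to regard $(M^a, M^{f,a})$ as a pair of continuous martingales under $\Pm_w$ with respect to the filtration generated by $W$, identify the almost-sure limit of their matrix of quadratic covariations, apply a functional multi-dimensional martingale central limit theorem to obtain their joint convergence in law, and then combine this with Lemmas~\ref{lemma:tri95}--\ref{lemma:12_1} to handle the front positions $L^a, R^a$ and with the proof of Lemma~\ref{lem6_1} to handle the deterministic scalings $A^a, A^{f,a}$.

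Since both $M$ and $M^f$ are stochastic integrals against the same space--time white noise $W$, the It\^o isometry gives
\begin{equation*}
\langle M^a\rangle_t = A^a_t,\qquad \langle M^{f,a}\rangle_t = A^{f,a}_t,\qquad \langle M^a, M^{f,a}\rangle_t = \farc{1}{a^2}\int_0^{a^2 t}\int_\R f(w_s(x))\,dx\,ds.
\end{equation*}
The proof of Lemma~\ref{lem6_1} already shows $A^{f,a}_t \to Dt$ (in probability, uniformly on compact $t$-intervals) via the ergodic theorem applied to the unique stationary measure $\mu$ from Theorem~\ref{th:mt-th1}, and the same argument based on~(\ref{eq:1_5}) in place of Lemma~\ref{lem:feb19_2} gives $A^a_t \to t$. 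For the cross term one proceeds identically: the translation-invariant functional $h \mapsto \int_\R f(h(x))\,dx$ is $\mu$-integrable with integral $c_f$, where the finiteness of $c_f$ follows from $|f(u)|\le \wK_f(u(1-u))^\gamma$ and Lemma~\ref{lem:feb19_2} at $\eta = \gamma$, so the ergodic theorem yields $\langle M^a, M^{f,a}\rangle_t \to c_f\, t$ uniformly on compacts in probability.

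Write $\Sigma$ for the symmetric $2\times 2$ matrix with diagonal entries $1, D$ and off-diagonal entry $c_f$. The quadratic-variation matrix of $(M^a, M^{f,a})$ thus converges in probability, uniformly on compact time intervals, to the deterministic continuous process $t\Sigma$. By the multi-dimensional functional martingale central limit theorem (for instance Theorem~7.1.4 in Ethier--Kurtz), this implies
\begin{equation*}
(M^a, M^{f,a}) \Rightarrow (B, B^f) \quad \text{in}\ \bD_{\R^2},\ a \to \infty,
\end{equation*}
where $(B, B^f)$ is the centered continuous Gaussian martingale with covariance matrix $t\Sigma$; in particular $B$ is a standard Brownian motion, $B^f$ has variance $D$, and $\langle B, B^f\rangle_t = c_f\, t$ as required. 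Lemmas~\ref{lemma:tri95} and~\ref{lemma:12_1}, read in the rescaled variables, state exactly that $\sup_{0\le t\le T}(|L^a_t - M^a_t|\vee|R^a_t - M^a_t|) \to 0$ in probability, so that $L^a, R^a, M^a$ share the common Brownian-motion limit $B$. Combining with the in-probability convergences $A^a \Rightarrow t$ and $A^{f,a}\Rightarrow Dt$ via a Slutsky-type argument then upgrades the joint convergence to the six-dimensional weak convergence claimed in the corollary.

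The main technical obstacle is justifying the ergodic limit of the cross-variation $a^{-2}\int_0^{a^2 t}\int_\R f(w_s(x))\,dx\,ds \to c_f\, t$. Because $w_s$ does not start from the stationary distribution and the spatial integral is over all of~$\R$, one must recenter by $L(w_s)$, invoke the total variation convergence in Theorem~\ref{th:mt-th1}, and use Lemma~\ref{lem:feb19_2} to secure uniform integrability of the $f$-weighted spatial mass before averaging in time. It is precisely the assumption $\gamma > 1/2$ in~(\ref{feb1202a}) that yields the bound $f(w)^2/(w(1-w))\le \wK_f^2(w(1-w))^{2\gamma-1}$ with integrable majorant under $\mu$, giving both $D<\infty$ for the limiting covariance and the uniform control needed to execute the ergodic passage to the limit for the cross term.
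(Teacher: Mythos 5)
Your proposal is correct and follows essentially the same route as the paper: the heart of the matter in both cases is the ergodic-theorem identification of the limits of the bracket processes $\langle M^a\rangle$, $\langle M^{f,a}\rangle$ and $\langle M^a, M^{f,a}\rangle$ (the last giving the correlation $c_f t$), combined with Lemmas~\ref{lemma:tri95} and~\ref{lemma:12_1} to transfer the limit to $L^a$ and $R^a$. The only cosmetic difference is that you package the joint Gaussian limit via the multidimensional martingale functional CLT, whereas the paper obtains the marginal limits by a Dambis--Dubins--Schwarz time change in Lemma~\ref{lem6_1} and then simply verifies the cross-variation in the corollary; your explicit remarks on non-stationary initial data and on where $\gamma>1/2$ enters are accurate and, if anything, more careful than the paper's one-line proof.
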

\begin{proof}
It only remains to check the correlation:
\begin{align*}
\langle M^{f,a}_\cdot, M^a_\cdot\rangle_t &= a^{-2}\int_0^{a^2t}\int_{\R} 
f(w_s(x)) \,dx\,ds 
\Rightarrow  t \E_{w,st}\Big[\int_{{\R}}
f(w(x))\,dx\Big]=  c_f t, \;t\geq 0,
\end{align*}
as $a\rightarrow\infty$, exactly as in~\eqref{6_2}. 
\end{proof}

\section{The proof of Theorem~\ref{th:2}: the upper bound on the speed}\label{sec:upper}

We assume till the end of the paper, without loss of generality, that $c_f>0$.
In this section, we prove the upper bound on the front speed in Theorem~\ref{th:2}. 
\begin{prop}
\label{prop:1}
Suppose that $u_0$ satisfies (\ref{feb1206}) 
and $f$ satisfies  (\ref{feb1202a}).
Then with probability 1,  we have 
\[
\limsup_{\sigma\to\infty}\sigma^2V(\sigma)\leq c_f\,.
\]
\end{prop}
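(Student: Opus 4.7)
The plan is to reduce to the rescaled equation \eqref{eq:spde-v_f}. Setting $v_t(x)=u_{t/\sigma^4}(x/\sigma^2)$ with the induced rescaling of the white noise produces a solution of \eqref{eq:spde-v_f} whose speed satisfies $V^{(v)}(\sigma):=\lim_{T\to\infty}R(v_T)/T=V(\sigma)/\sigma^2$, so the proposition is equivalent to $\limsup_{\sigma\to\infty}\sigma^4 V^{(v)}(\sigma)\le c_f$. I would then establish the subadditive inequality $\E_v R(v_{m+n})\le\E_v R(v_m)+\E_v R(v_n)$ by replacing $v_m$ at time $m$ with the step function $\1_{\{x<R(v_m)\}}$ and invoking the comparison principle for \eqref{eq:spde-v_f}, exactly as in the proof of Lemma~\ref{lem-feb1204}. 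Combined with the $L^1$-convergence $R(v_T)/T\to V^{(v)}(\sigma)$ that follows from Lemma~\ref{lem-feb1204} and the Gaussian tails in Lemma~\ref{lem:feb12-1}, Fekete's lemma yields $V^{(v)}(\sigma)\le\E_v R(v_T)/T$ for every $T>0$.

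Next I would choose the time scale $T=\sigma^8 t_0$ with $t_0>0$ fixed and apply Girsanov's theorem with the pure voter model $w$ as the reference process, giving
\[
\E_v R(v_{\sigma^8 t_0})=\E_w\!\left[R(w_{\sigma^8 t_0})\exp\!\big(Z^\sigma_{\sigma^8 t_0}\big)\right],\qquad Z^\sigma_T=\sigma^{-4}M^f_T-\tfrac12\sigma^{-8}A^f_T.
\]
In the scaling of Section~\ref{sec:voter} with $a=\sigma^4$ this reads
\[
\sigma^{-4}\E_v R(v_{\sigma^8 t_0})=\E_w\!\left[R^{\sigma^4}_{t_0}\exp\!\big(M^{f,\sigma^4}_{t_0}-\tfrac12 A^{f,\sigma^4}_{t_0}\big)\right].
\]
By Corollary~\ref{cor2}, the triple $(R^{\sigma^4}_{t_0},M^{f,\sigma^4}_{t_0},A^{f,\sigma^4}_{t_0})$ converges weakly to $(B_{t_0},B^f_{t_0},Dt_0)$ with $\langle B,B^f\rangle_{t_0}=c_f t_0$. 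Assuming uniform integrability (addressed below), the right-hand side tends to $\E[B_{t_0}\exp(B^f_{t_0}-Dt_0/2)]$, which the Gaussian identity $\E[Xe^Y]=\mathrm{Cov}(X,Y)\,e^{\mathrm{Var}(Y)/2}$ (valid for centred jointly Gaussian $(X,Y)$) evaluates to $c_f t_0$. Consequently
\[
\limsup_{\sigma\to\infty}\sigma^4 V^{(v)}(\sigma)\le\limsup_{\sigma\to\infty}\frac{1}{t_0}\cdot\sigma^{-4}\E_v R(v_{\sigma^8 t_0})=c_f,
\]
which is the claimed bound.

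The main obstacle is the uniform integrability required to upgrade the weak convergence to convergence of expectations. The factor $R^{\sigma^4}_{t_0}$ has uniform Gaussian tails by Lemma~\ref{lem:feb12-1} applied to $w$, so $L^q$-bounds for any $q<\infty$ are automatic. For the Girsanov density I would decompose
\[
\exp(pZ^\sigma_T)=\mathcal{E}^{(p)}_T\cdot\exp\!\Big(\tfrac{p^2-p}{2}\sigma^{-8}A^f_T\Big)=\mathcal{E}^{(p)}_T\cdot\exp\!\Big(\tfrac{p^2-p}{2}A^{f,\sigma^4}_{t_0}\Big),
\]
where $\mathcal{E}^{(p)}_T$ is a mean-one exponential martingale. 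Under the Girsanov change of measure with density $\mathcal{E}^{(p)}_T$, the process $w$ becomes a voter model with an additional drift of order $\sigma^{-4}$, so a perturbative ergodic argument based on Theorem~\ref{th:mt-th1} and Lemma~\ref{lem:feb19_2} shows $A^{f,\sigma^4}_{t_0}$ remains close to $Dt_0$ and $\E_w\exp(pZ^\sigma_T)\le C(p,t_0)$ uniformly in $\sigma$ for $p$ close to $1$. A H\"older inequality then delivers the required uniform integrability and closes the argument.
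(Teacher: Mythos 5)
Your overall architecture is genuinely different from the paper's. The paper never computes $\E_v[R(v_T)]$ directly: it builds a renewal structure of stopping times $\tau_m$, restarts with step-function data at positions $m\lambda_1\sigma^4$, and only ever needs Girsanov applied to \emph{indicators} of "good events", on which the exponential is controlled by stopping the martingale part $M^{f,\sigma^4}$ at a fixed level $N_\eps$ (the stopping time $\xi^\eps$ in the proof of Lemma~\ref{lem:good}), so that the density is bounded by $e^{N_\eps}$ and the complementary event costs only $e^{N_\eps}\Pm(\sup_t M^{f,\sigma^4}_t\ge N_\eps)$, which is small by Lemma~\ref{lem6_1}. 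Your route---subadditivity giving $V^{(v)}(\sigma)\le\E_v R(v_T)/T$ for each $T$, then evaluating $\sigma^{-4}\E_v R(v_{\sigma^8t_0})$ via Girsanov, Corollary~\ref{cor2} and the identity $\E[Xe^Y]=\mathrm{Cov}(X,Y)e^{\mathrm{Var}(Y)/2}$---is correct in outline and arguably cleaner, and the algebra (the rescaling $\sigma^{-4}M^f_{\sigma^8t_0}=M^{f,\sigma^4}_{t_0}$, $\sigma^{-8}A^f_{\sigma^8t_0}=A^{f,\sigma^4}_{t_0}$, and the evaluation of the Gaussian limit as $c_ft_0$) checks out.

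The genuine gap is exactly where you flag it: the uniform integrability of $R^{\sigma^4}_{t_0}\exp(Z^\sigma_{\sigma^8t_0})$. Your proposed bound $\sup_\sigma\E_w[\exp(pZ^\sigma_T)]\le C(p,t_0)$ for some $p>1$ reduces, after the change of measure by $\mathcal{E}^{(p)}_T$, to uniform \emph{exponential} moments of $A^{f,\sigma^4}_{t_0}$ under a tilted voter model. Nothing in the paper supplies this. The instantaneous rate satisfies only $\int_\R f(w_s)^2/(w_s(1-w_s))\,dx\le \wK_f^2\int_\R(w_s(1-w_s))^{2\gamma-1}dx$, and Lemma~\ref{lem:feb19_2} gives a finite \emph{first} moment of this under stationarity; the natural upper bound by the interface width fails outright, since by Theorem~\ref{th:mt-th1} the width has infinite mean under the stationary law. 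Passing from a first-moment bound on a heavy-tailed additive functional to uniform-in-$\sigma$ exponential moments of its time average is a substantial concentration estimate, not a corollary of Theorem~\ref{th:mt-th1} and Lemma~\ref{lem:feb19_2}, and calling it "a perturbative ergodic argument" does not discharge it. (You would also need to justify that $\mathcal{E}^{(p)}$ is a true martingale, which is again condition \eqref{feb1230}-type work under the tilted law.) To close the argument you should instead import the paper's truncation device: split on $\{\sup_{t\le T_0}M^{f,\sigma^4}_t<N\}$ and its complement, use optional stopping at $\xi^\eps$ so the density is bounded by $e^N$ where needed, and control the remainder by $e^N\Pm_w(\sup M^{f,\sigma^4}\ge N)$ together with the Gaussian tail of $R^{\sigma^4}$ from Lemma~\ref{lem:feb12-1}; this is some bookkeeping because $R(w_T)$ is unbounded on the bad event, but it avoids any exponential moment of $A^f$.
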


\subsection{Rescaling}
\label{sec:4}

First, we show via a rescaling how to pass from (\ref{eq:RFKPP-general}) to (\ref{eq:spde-v_f}).
Consider the rescaled function 
\[
v_t(x)=u_{\sigma^{-4}t}(\sigma^{-2}x).
\]
To get an equation for $v_t(x)$, we use the mild form \eqref{eq:mild-form} and 
the relations 
\begin{align}
\label{eq:scaling-g-w}
G_{a^2t}(bx)&=b^{-1}G_{(a^2t/b^2)}(x)  \\
W^{a,b}(dyds)&:=a^{-1}b^{-1/2}W(bdy,a^2ds)\stackrel{\mathcal{D}}{=}W(dyds)  \nonumber\\
ab^{1/2}W^{a,b}(dyds)&=W(bdy,a^2ds),  \nonumber
\end{align}
that hold for any $a,b>0$. Here, $\stackrel{\mathcal{D}}{=}$ means equality in distribution.  
From \eqref{eq:mild-form}, for any $a,b>0$, we get 
\begin{align*}
u_{(a^2t)}(bx)&=\int_{{\R}}G_{a^2t}(bx-y)u_0(y)dy
+\int_{0}^{a^2t}\int_{{\R}}G_{a^2t-s}(bx-y)f(u_s(y))dyds  \\
&\quad +\sigma\int_{0}^{a^2t}\int_{{\R}}G_{a^2t-s}(bx-y)
 \sqrt{u_s(y)(1-u_s(y))}W(dyds)=: I+II+III.
\end{align*}
We make the change of variables $s=a^2s'$, $y=by'$ and use 
\eqref{eq:scaling-g-w}.  
For the term $I$ we have
\begin{align}
\label{eq:term-I}
I=b\int_{{\R}}G_{a^2t}(bx-by')u_0(by')dy' 
= \int_{{\R}}G_{a^2t/b^2}(x-y')u_0(by')dy'.   
\end{align}
The second term can be rewritten as
\begin{align}
\label{eq:term-II}
II&=\int_{0}^{a^2t}\int_{{\R}}G_{a^2t-s}(bx-y)f(u_s(y))dyds  
=\int_{0}^{t}\int_{{\R}}G_{a^2t-a^2s'}(bx-by')
f(u_{a^2s'}(by')) ba^2dy'ds'  \nonumber\\
&=a^2\int_{0}^{t}\int_{{\R}}G_{a^2(t-s')/b^2}(x-y')
 f(u_{a^2s'}(by')) dy'ds', 
\end{align}
and changing variables, the last term is
\begin{align}
\label{eq:term-III}
III&=\sigma\int_{0}^{a^2t}\int_{{\R}}G_{a^2t-s}(bx-y)
 \sqrt{u_s(y)(1-u_s(y))}W(dyds)   \\
&=
 \sigma\int_{0}^{t}\int_{{\R}}G_{a^2t-a^2s'}(bx-by')  
\sqrt{u_{a^2s'}(by')(1-u_{a^2s'}(by'))}W(bdy',a^2ds')  
 \nonumber\\
&= \sigma\int_{0}^{t}\int_{{\R}}b^{-1}G_{a^2(t-s')/b^2}(x-y')
\sqrt{u_{a^2s'}(by')(1-u_{a^2s'}(by'))}ab^{1/2}W^{a,b}(dy'ds') . 
 \nonumber\\
&= ab^{-1/2}\sigma\int_{0}^{t}\int_{{\R}}G_{a^2(t-s')/b^2}(x-y')
\sqrt{u_{a^2s'}(by')(1-u_{a^2s'}(by'))}W^{a,b}(dy'ds') . 
 \nonumber
\end{align}
We take
\[
a=\sigma^{-2}, \quad b=\sigma^{-2}, 
\]
so that $ab^{-1/2}\sigma=1$ and $a^2/b^2=1$. 
Defining $v_t(x):=u_{(a^2t)}(bx)$ and putting together the above terms, we see
that $v_t(x)$ satisfies 
\begin{align}
\label{eq:W-ab}
v_t(x)&=\int_{{\R}}G_{t}(x-y)u_0(y)dy
+\sigma^{-4}\int_{0}^{t}\int_{{\R}}G_{t-s}(x-y)
 f(v_s(y))dyds  \\
&\quad +\int_{0}^{t}\int_{{\R}}G_{t-s}(x-y)
 \sqrt{v_s(y)(1-v_s(y))}W^{a,b}(dyds).   \nonumber
\end{align}
Since the solution $v$ to \eqref{eq:W-ab} is unique in law, and since 
$W$ and $W^{a,b}$ are equal in law, we see that $v$ is the unique weak 
solution to \eqref{eq:spde-v_f} with the initial condition
$v_0(x) = u_0(\sigma ^{-2}x)$. 
Thus in general our scaling changes the initial data. However, if $u_0(x)=\1({x\le 0})$, then clearly $v_0(x)=u_0(x)$. 

Now it is clear that the conclusion of Proposition~\ref{prop:1} 
would follow 
if we show that
\begin{align}
\label{eq:eqiv-thm-1}
\limsup_{\sigma\to\infty}\sigma^4V^{(v)}(\sigma)&\leq c_{f},
\end{align}
where
\[
V^{(v)}(\sigma)=\lim_{t\to+\infty}\farc{R(v_t)}{t}.
\]
Let us also note that the rescaled Girsanov functional (\ref{eq:Girsanov-u}) 
takes the form 
\begin{align}
\label{eq:Girsanov}
Z_t&=\sigma^{-4}\int_{0}^{t}\int_{{\R}}
\frac{f(w_s(x))}{\sqrt{ w_s(x)(1-w_s(x))}}W(dx,ds)
  -\frac{1}{2}\sigma^{-8}\int_{0}^{t}\int_{{\R}}\frac{f(w_s(x))^2}{w_s((x)1-w_s(x))}dxds  \\
&=\sigma^{-4} M^f_t - \frac{1}{2}\sigma^{-8}A^f_t.    \nonumber
\end{align}

\subsection{Time steps for the upper bound}

 Note that for the upper bound on $V(\sigma)$ and $ V^{(v)}(\sigma)$, we may 
assume without loss of generality that the initial condition $u_0(x)=v_0(x)=\1({x\le 0})$, 
by the comparison principle and translation invariance in law.
We will define a sequence of stopping times $0=\tau_0\leq\tau_1\leq\cdots$, and
a sequence $v^{(m)}_t(x)$ of solutions to~(\ref{eq:spde-v_f}) for
$t\geq\tau_m$, with the initial conditions $v^{(m)}_{\tau_m}(x)\ge v_{\tau_m}(x)$ at $t=\tau_m$.
The comparison principle will imply that $v^{(m)}_t(x)\ge v_t(x)$ for 
$t\ge\tau_m$.  Moreover, we will choose $v_t^{(m)}$ so that for each 
$m=0,1,2,\ldots$ the following conditions hold almost surely:
\begin{align}
\label{eq:goal-v-comparison-a}
v_t(x)&\leq v_t^{(m)}(x), \quad  
 \text{for }t\geq\tau_m,\, x\in{\R}  \\
\label{eq:goal-v-comparison-b}
R\big(v^{(m)}_{\tau_m}\big)&= m\lambda_1\sigma^4,  \\
\label{eq:goal-v-comparison-c}
v_{\tau_{m}}^{(m-1)}(x)&\leq v_{\tau_{m}}^{(m)}(x),  
 \quad \text{for }x\in{\R}, 
\end{align}
with the constant $\lambda_1$ to be specified later.  
It follows from  (\ref{eq:goal-v-comparison-a}), 
that for all $m=0,1,2,\ldots$ and for all
$t\geq\tau_m$, we have
\begin{equation}
\label{eq:R-comparison}
R(v_t)\leq R\big(v^{(m)}_t\big),
\end{equation}
almost surely.
Thus, to bound $R(v_t)$ from above, it suffices to bound $R(v_t^{(m)})$.  

Let us inductively construct $\tau_k$ and $v_t^{(k)}(x)$ for $k=0,1,\ldots$.  
For convenience in \eqref{eq:goal-v-comparison-c}, we define 
$v^{(-1)}_{t}(x)=0$.  Fix $T_0>0$ and
$N\in{\N}$, to be specified later, and start with $\tau_0=0$ and  
$v_t^{(0)}(x)=v_t(x)$, so that \eqref{eq:goal-v-comparison-a}, 
\eqref{eq:goal-v-comparison-b}, and \eqref{eq:goal-v-comparison-c} hold 
for $m=0$ automatically.  Suppose that we have defined $\tau_m$ and  
$v^{(m)}_t$ for $t\geq\tau_m$ and $0\leq m\leq k$, and assume
that (\ref{eq:goal-v-comparison-a}), (\ref{eq:goal-v-comparison-b}) 
and (\ref{eq:goal-v-comparison-c})
hold for $0\leq m\leq k$.   Given $v^{(k)}_t(x)$, defined for 
$t\geq\tau_{k}$ and $x\in{\R}$, we set  
\begin{align}\label{feb1104}
M^{f,v,k}_t:&= \int_{\tau_{k}}^t \int_{{\R}}
\frac{f\big(v^{(k)}_s(x)\big)}{\sqrt{ v^{(k)}_s(x)(1-v^{(k)}_s(x))}}W(dx,ds),
\end{align}
and
\begin{align}\label{feb1102}
\tau_{k+1}&=\inf\Big\{t\in[\tau_k,\tau_k+T_0\sigma^8]:
  R\big(v_t^{(k)}\big)=(k+1)\lambda_1\sigma^4 
~~\text{or}~~\;\frac{1}{\sigma^4}M^{f,v,k}_t\geq N\Big\},
\end{align}
with $\tau_{k+1}=\tau_k+T_0\sigma^8$ if the above set is empty.
Then, we define $v^{(k+1)}_t(x)$ for $t\geq\tau_{k+1}$, and~$x\in{\R}$ as the 
solution to (\ref{eq:spde-v_f}) with the initial condition
\[
v^{(k+1)}_{\tau_{k+1}}(x)=\1(x\leq (k+1)\lambda_1\sigma^4).
\]
Note that for $m=k+1$, \eqref{eq:goal-v-comparison-a} and 
\eqref{eq:goal-v-comparison-c} hold by the comparison principle.  
\eqref{eq:goal-v-comparison-b} holds by construction.  

For convenience, we write
\[
\Delta\tau_m=\tau_{m+1}-\tau_m
\]
and note that $\{\Delta\tau_m\}$ are i.i.d. random variables for $m\ge 0$.  

\subsection{A good event and its consequences (for the upper bound)}

To get an upper bound on $V^{(v)}_f(\sigma)$, it suffices to get an appropriate 
lower bound on $\tau_m$ as $m\to\infty$.  
Let us define the event
\begin{equation}\label{feb1110}
G_{m}=\{\Delta\tau_m=T_0\sigma^8\},~~m\ge 0.
\end{equation}
Proposition~\ref{prop:1} is a consequence of the following lemma.
\begin{lemma}
\label{lem:good}
Let $\eps\in(0,\min(10^{-1},  c_f^{-2}))$ be arbitrary and set $\delta_\eps=\eps/10$. 
There exist $\bT_\eps$, $N_\eps$, and $\sigma_\eps$ such that 
for $T_0=\bT_\eps$, $N=N_\eps$
and any $\sigma\geq\sigma_0^\eps$, $m\geq 0$, and 
\begin{equation}
\label{9_1_11}
\lambda_1=(c_f+\delta_\eps)\bT_\eps,
\end{equation}
we have
\begin{equation}
\label{eq:bound-G-0}
\lambda_2:=\Pm_v\big(G_{m}\big)\geq 1-\delta_\eps. 
\end{equation}
\end{lemma}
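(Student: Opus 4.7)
My plan is to reduce to the base case via translation invariance and the i.i.d.\ structure of $\{\Delta\tau_m\}$, then rescale diffusively at $a = \sigma^4$ and transfer the Brownian limits of Corollary~\ref{cor2} from $\Pm_w$ to $\Pm_v$ via Girsanov. By translation invariance of \eqref{eq:spde-v_f} in space-time and the Markov structure of the construction, it suffices to treat $m = 0$, $\tau_0 = 0$ with $v^{(0)}_0(x) = \1_{\{x \leq 0\}}$. I decompose $G_0^c \subseteq A_R \cup A_M$, where
\[
A_R = \bigl\{\sup_{s \leq \bT_\eps\sigma^8} R(v^{(0)}_s) \geq \lambda_1 \sigma^4\bigr\}, \qquad
A_M = \bigl\{\sup_{s \leq \bT_\eps\sigma^8} \sigma^{-4} M^{f,v,0}_s \geq N\bigr\},
\]
and introduce the rescaled objects $R^a_t := R(v^{(0)}_{\sigma^8 t})/\sigma^4$, $M^{f,a}_t := \sigma^{-4} M^{f,v,0}_{\sigma^8 t}$, $A^{f,a}_t := \sigma^{-8} A^{f,v,0}_{\sigma^8 t}$, so that $A_R = \{\sup_{[0, \bT_\eps]} R^a \geq (c_f + \delta_\eps) \bT_\eps\}$.

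The Girsanov argument from Section~\ref{sec:gir} identifies $\Pm_v$ restricted to $[0, \bT_\eps \sigma^8]$ with the measure having density $e^{Z^a}$, $Z^a = M^{f,a}_{\bT_\eps} - \tfrac12 A^{f,a}_{\bT_\eps}$, with respect to the voter-model measure $\Pm_w$ (with $w_0 = \1_{\{x \leq 0\}}$). Under $\Pm_w$, Corollary~\ref{cor2} gives the joint weak convergence $(R^a, M^{f,a}, A^{f,a})|_{[0, \bT_\eps]} \Rightarrow (B, B^f, Dt)|_{[0, \bT_\eps]}$ with $\langle B, B^f\rangle_t = c_f t$. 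Combined with the Girsanov formula, under $\Pm_v$ this yields
\[
(R^a, M^{f,a})|_{[0, \bT_\eps]} \;\Rightarrow\; (\hat B_t + c_f t,\; \hat B^f_t + D t)|_{[0, \bT_\eps]},
\]
where $\hat B$, $\hat B^f$ are Brownian motions of variances $1$ and $D$. Plugging in $\lambda_1 = (c_f + \delta_\eps) \bT_\eps$: if $\hat B_t + c_f t \geq (c_f + \delta_\eps) \bT_\eps$ for some $t \leq \bT_\eps$ then $\hat B_t \geq \delta_\eps \bT_\eps$, so the reflection principle gives
\[
\limsup_{\sigma \to \infty} \Pm_v(A_R) \leq 2 \Pm(\hat B_{\bT_\eps} \geq \delta_\eps \bT_\eps) \leq 2 e^{-\delta_\eps^2 \bT_\eps/2},
\]
which is $\leq \delta_\eps/3$ once $\bT_\eps$ is large. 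With $\bT_\eps$ fixed, choose $N_\eps$ so that $\Pm(\sup_{[0, \bT_\eps]}(\hat B^f_t + Dt) \geq N_\eps) \leq \delta_\eps/3$, controlling $\Pm_v(A_M)$; finally taking $\sigma_\eps$ large enough to absorb the weak-convergence error yields $\Pm_v(G_0^c) \leq \delta_\eps$.

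The main technical obstacle is the transfer of weak convergence from $\Pm_w$ to $\Pm_v$, which is not automatic and demands uniform integrability of $\{e^{Z^a}\}_\sigma$. The key observation is that, under $\Pm_w$, $A^{f,a}_{\bT_\eps} \to D \bT_\eps$ almost surely by the ergodic argument of Lemma~\ref{lem6_1}, so $\Pm_w(A^{f,a}_{\bT_\eps} > D \bT_\eps + 1) \to 0$. On the complementary event one has the pointwise bound
\[
e^{p Z^a} \;\leq\; e^{p M^{f,a}_{\bT_\eps} - \frac{p^2}{2} A^{f,a}_{\bT_\eps}} \cdot e^{\frac{p^2 - p}{2}(D \bT_\eps + 1)}\,,
\]
in which the first factor is a nonnegative supermartingale of expectation at most one. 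This supplies an $L^p$ bound for any $p > 1$ on $e^{Z^a}$ restricted to $\{A^{f,a}_{\bT_\eps} \leq D \bT_\eps + 1\}$, so that any event whose $\Pm_w$-probability tends to zero also has vanishing $\Pm_v$-probability, modulo the vanishing tail contribution from the complementary event. This justifies the convergence under $\Pm_v$ used above and completes the plan.
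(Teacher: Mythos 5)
Your argument is correct and follows the paper's overall strategy: reduction to $m=0$ via the i.i.d.\ structure, diffusive rescaling at $a=\sigma^4$, Girsanov transfer to the voter model $w$, the joint functional CLT of Corollary~\ref{cor2}, and a final Girsanov computation for the correlated pair $(B,B^f)$ producing the drifts $c_ft$ and $Dt$, with the same reflection-principle estimates at the end. Where you genuinely diverge is in how the weak limit is pushed through the density $e^{Z^a}$. The paper never establishes uniform integrability of $e^{Z^a}$: it stops at $\xi^\eps=\inf\{t:M^{f,\sigma^4}_t\geq N_\eps\}$, so that on the event $\{\xi^\eps\leq\bT_\eps\}$ (your $A_M$) the density is bounded crudely by $e^{N_\eps}$ and this is beaten by the Gaussian tail $e^{-N_\eps^2/(2\bT_\eps D)}$, while on $\{\xi^\eps>\bT_\eps\}$ the integrand is bounded by $e^{N_\eps}$ and bounded convergence applies -- i.e.\ the cutoff $N$ built into the definition of $\tau_{k+1}$ is precisely what lets the paper avoid any UI discussion. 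You instead prove a uniform $L^p$ bound on $e^{Z^a}\1\{A^{f,a}_{\bT_\eps}\leq D\bT_\eps+1\}$ via the supermartingale property of $\exp(pM^{f,a}-\tfrac{p^2}{2}A^{f,a})$, which gives the cleaner intermediate statement that $(R^a,M^{f,a})$ converges under $\Pm_v$ itself to the tilted Brownian pair, from which both tail bounds are read off. The one step you should spell out is the ``vanishing tail contribution'' $\E_w[e^{Z^a}\1\{A^{f,a}_{\bT_\eps}>D\bT_\eps+1\}]\to0$: this is not a consequence of $\Pm_w(A^{f,a}_{\bT_\eps}>D\bT_\eps+1)\to0$ alone, but follows from $\E_w[e^{Z^a}]=1$ (the Girsanov density is a true martingale, since $\Pm_v$ is a probability measure) together with $\E_w[e^{Z^a}\1\{A^{f,a}_{\bT_\eps}\leq D\bT_\eps+1\}]\to\E[e^{B^f_{\bT_\eps}-\frac12 D\bT_\eps}]=1$, the latter by your $L^p$ bound and Corollary~\ref{cor2}. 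With that line added, both routes deliver the lemma with the same constants.
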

Note that $\lambda_2$ does not depend on $m$ since $\Delta\tau_m$ are i.i.d for
$m\ge 1$.
We will prove Lemma \ref{lem:good} in the next section.  
Now we are ready to give
\begin{proof}[Proof of Proposition~\ref{prop:1}]
Given $\eps\in(0,1/10)$, let 
$\bT_\eps$, $N_\eps$ and $\sigma_\eps$ 
be as in Lemma~\ref{lem:good}, and take an arbitrary~$\sigma\ge\sigma_\eps$.
Then by Lemma~\ref{lem:good}, we have
\begin{equation}
\label{9_1_10}
\lambda_2 \geq 1-\delta_\eps, 
\end{equation} 
and
by \eqref{eq:bound-G-0} and the definition of $G^{(m)}$ with $T_0=\bT_\eps$, we get
\[
\E_v[\Delta\tau_m]\geq \bar T_\eps\lambda_2\sigma^8.
\]
The strong law of large numbers implies that we have, $\Pm_v$ 
almost surely,
\begin{equation}\label{taum-m}
\lim_{m\to\infty}\frac{\tau_m}{m}\geq \bT_\eps \lambda_2\sigma^8.
\end{equation}
Since $R(v^{(m)}_{\tau_m})=m\lambda_1\sigma^4$, we have that, also $\Pm_v$ almost 
surely, 
\begin{equation}
\label{eq:G-0-conclusion}
\limsup_{m\to\infty}\frac{R(v^{(m)}_{\tau_m})}{\tau_m}
=\limsup_{m\to\infty}\frac{m\lambda_1\sigma^4}{\tau_m}
\leq \frac{\lambda_1\sigma^4}{\bT_\eps\lambda_2\sigma^8}
= \frac{\lambda_1}{\bT_\eps\lambda_2}\sigma^{-4}.
\end{equation}
Furthermore, since by definition, for $\tau_m\leq t\leq\tau_{m+1}$ we have
\[
R\big(v^{(m)}_t\big)\leq  (m+1)\lambda_1\sigma^4. 
\]
Hence, we get that, $\Pm_v$ almost surely, we have,
using (\ref{9_1_10}) and (\ref{taum-m}),
\begin{align}
\label{eq:to-verify-for-velocity}
V^{(v)}(\sigma)
&\le\limsup_{m\to\infty}\sup_{\tau_m\leq t\leq\tau_{m+1}}\le
\frac{R\big(v^{(m)}_{t}\big)}{t}\le 
 \limsup_{m\to\infty}\frac{\lambda_1(m+1)\sigma^4}{\tau_m}
 \\
\nonumber
&\le \frac{\lambda_1\sigma^4}{\lambda_2\bT_\eps\sigma^8}
\leq \frac{(c_f+\delta_\eps)}{(1-\delta_\eps)}\sigma^{-4}
\leq (c_f+\sqrt\eps)\sigma^{-4}. 
\end{align}
Note that \eqref{eq:to-verify-for-velocity} holds for any $\sigma\geq \sigma_0^\eps$, 
and, since $\eps$ is arbitrary small, we are done.   
This finishes the proof of Proposition \ref{prop:1}. 

\end{proof}

\subsection{Proof of Lemma \ref{lem:good}}

As $G_{m}$ are i.i.d., it suffices to set $m=0$.
We fix $\eps\in (0,1/10)$, let $\delta_\eps=\eps/10$,  
take $\bT_\eps$ sufficiently large, so that 
\begin{align} 
\label{9_12_2}
2 \exp\Big(-\frac{\delta_\eps^2\bT_\eps}{2}\Big)
 &\leq \farc{\eps}{100},
\end{align} 
set $\lambda_1=(c_f+\delta_\eps)\bT_\eps$, and 
let $N_\eps> (2+\delta_\eps)\bT_\eps D$  
be sufficiently large (its value will be determined later in the proof).
We define the stopping time 
\begin{equation}
\label{10_1_3}
\xi^\eps
=\inf\{t\geq 0: M^{f,\sigma^4}_t \geq N_\eps\}.  
\end{equation}
Then by Girsanov's theorem, we have, with $Z_t$ as in ~\eqref{eq:Girsanov}:
\begin{align*}
\nonumber
\Pm_v&(G^c_0)=\E_w\left[\exp\left(Z_{\sigma^8(\bT_\eps\wedge \xi^\eps)}\right)\1_{G^c_0}\right] 
= \E_w\Big[\exp\left(\sigma^{-4}\Big(M^f_{\sigma^8(\bT_\eps\wedge \xi^{\eps})}-
\frac{1}{2}\sigma^{-4}  A^f_{\sigma^8(\bT_\eps\wedge \xi^{\eps})}\Big)\right)  \\
&\hspace{1.5cm} \times\1\big(R(w_t)\geq 
\lambda_1 \sigma^4 \;\text{for some}\; t\leq \sigma^8\bT_\eps\; \text{or}\;  
\sigma^{-4}M^{f}_t \geq N_\eps   \;\text{for some}\; t\leq \sigma^8 \bT_\eps
\big)\Big]   \\
&= \E_w\left[\exp\left(M^{f,\sigma^4}_{\bT_\eps\wedge \xi^{\eps}}
- \frac{1}{2}A^{f,\sigma^4}_{\bT_\eps\wedge \xi^{\eps}}\right)
 \times\1\left(R^{\sigma^4}_t\geq (c_f+\delta_\eps)\bT_\eps  
 \;\text{for some}\; t\leq \bT_\eps
 \;  \text{or}\;  
\xi^{\eps} \leq \bT_\eps   
 \right)\right]   \\
&\leq \E_w\Big[\exp\left(M^{f,\sigma^4}_{\bT_\eps\wedge \xi^{\eps}}- 
\frac{1}{2}A^{f,\sigma^4}_{\bT_\eps\wedge \xi^{\eps}}\right)
\1\left(\xi^{\eps}\leq \bT_\eps\right)\Big]   \\
&\hspace{0.5cm}+ \E_w\Big[\exp\left(M^{\sigma^4}_{\bT_\eps\wedge \xi^{N,\sigma}}
  -\frac{1}{2}A^{f,\sigma^4}_{\bT_\eps\wedge \xi^{N,\sigma}}\right)   
  \times\1\left(R^{\sigma^4}_t\geq (c_f+\delta_\eps)\bT_\eps  
  \;\text{for some}\; t\leq 
     \bT_\eps\right)\1\left(\xi^{\eps}> T_\eps\right)\Big]   \\
&=: I^{\eps}_1+I^{\eps}_2\,.
\end{align*}
We first bound $ I^{\eps}_1$: 
\begin{align}
\label{9_1_5}
I^{\eps}_1
&= \E_w\left[\exp\left(M^{f,\sigma^4}_{\xi^{\eps}\wedge \bT_\eps}- 
\frac{1}{2}A^{f,\sigma^4}_{\xi^{\eps}\wedge \bT_\eps}\right) 
\1\left(\xi^{\eps}\leq \bT_\eps\right)\right] 
\leq e^{N_\eps} \Pm_w\Big(\sup_{0\le t\leq \bT_\eps} M^{f,\sigma^4}_t \geq N_\eps\Big).
\end{align} 
Let $\Pm^B$ and $\Pm^{B^f}$ be the measures induced on the canonical path space 
by  the standard Brownian motion $B$ and by the Brownian motion with variance $D$,
respectively, and $\E^B$ and $\E^{B^f}$ be the corresponding expectations.
Then by Lemma~\ref{lem6_1} we have 
\begin{align}
\limsup_{\sigma\rightarrow \infty} I^{\eps}_1
\leq  e^{N_\eps} \Pm^{B^f}\Big(\sup_{0\le t\leq \bT_\eps} B^f_t \geq N_\eps\Big)
\leq e^{N_\eps}2  \Pm^{B}\left( \sqrt{D} B_{\bT_\eps} \geq N_\eps\right) 
\label{9_12_1}
\leq 2e^{N_\eps}e^{-{N_\eps^2}/(2\bT_\eps D)},
\end{align}
where the second inequality follows by the reflection principle and the last inequality 
follows by a simple bound on Gaussian tail probabilities. 
By choosing  $N_\eps$ 
sufficiently large, 
we get 
\begin{align}
\nonumber
\limsup_{\sigma\rightarrow \infty} I^{\eps}_1&\leq  \eps/100. 
\end{align}
Thus, 
there exists $\sigma_\eps$, such that for all $\sigma\geq \sigma_\eps$ we have 
\begin{align}
\label{9_1_6}
 I^{\eps}_1&\leq  \eps/50. 
\end{align}

Next, we bound $I^{\eps}_2$. 
Let $\Pm^{B^f,B}$  be the measure induced on the canonical path space by  
the zero-mean Brownian motions $B^f,B$, 
such that $B^f$ has variance $D$, $B$ has variance $1$, and the 
covariance of $B^f$ and $B$ is $c_f$, 
and let $\E^{B^f,B}$ be the corresponding expectation.
We use again  Corollary~\ref{cor2}, 
properties of weak convergence, the dominated convergence theorem (we can 
switch to the Skorohod space if needed) to get  
\begin{align}
\nonumber
&\limsup_{\sigma\rightarrow \infty} I^{\eps}_2  
= \limsup_{\sigma\rightarrow \infty} 
\E_w\Big[\exp\big(M^{f,\sigma^4}_{\bT_\eps}- \frac{1}{2} A^{f,\sigma^4}_{\bT_\eps}\big)
\1(\sup_{0\le t\leq \bT_\eps} R^{\sigma^4}_t\geq (c_f+\delta_\eps)\bT_\eps)  
\nonumber
\1(\sup_{0\le t\leq \bT_\eps} M^{f,\sigma^4}_t<N_\eps)\Big]   \nonumber\\
&~~~~\leq \E^{B^f,B}\Big[e^{B^f_{\bT_\eps}- \frac{1}{2}D\bT_\eps)}
 \1\Big(\sup_{0\le t\leq \bT_\eps} B_t\geq (c_f+\delta_\eps)\bT_\eps\Big)
 \1\Big(\sup_{0\le t\leq \bT_\eps} B^f_t\leq N_\eps\Big)\Big] 
  \\
&~~~~\leq \E^{B^f,B}\Big[e^{B^f_{\bT_\eps}- \frac{1}{2}D\bT_\eps}
 \1(\sup_{0\le t\leq \bT_\eps} B_t\geq (c_f+\delta_\eps)\bT_\eps)\Big] 
\nonumber
= \Pm^B\Big(\sup_{0\le t\leq \bT_\eps}(B_t+c_ft)  \geq  (c_f+\delta_\eps)\bT_\eps\Big).
\end{align} 
In the last equality we used the Girsanov theorem, since under the
$\exp(B^f_{\bT_\eps}- \frac{1}{2}D\bT_\eps)$ change of measure,  
$B$ is a Brownian motion with the
drift $2c_f$ (recall that the covariance of $B^f$ and $B$ is $c_f$).  
Now it is easy to get 
\begin{align} 
\Pm^B\Big(\sup_{0\le t\leq \bT_\eps}(B_t+c_ft)  \geq  (c_f+\delta_\eps)\bT_\eps\Big) 
\leq \Pm^B\Big(\sup_{0\le t\leq \bT_\eps}B_t  \geq  \delta_\eps \bT_\eps\Big)
\leq 2 e^{-{\delta_\eps^2\bT_\eps}/{2}}
 &\leq \eps/100,
\end{align} 
where in the second inequality we again used reflection principle and 
a bound on Gaussian tail, and the last inequality follows from
\eqref{9_12_2}. Hence, there is $\sigma_\eps$ 
such that for all~$\sigma\geq \sigma_\eps$, we have 
\begin{align}
\nonumber
 I^{\eps}_2&\leq  \eps/50. 
\end{align}
Combining the above estimates, we get that for $\sigma\geq \sigma_\eps$ we have 
\begin{align}
\Pm_v(G^c_0)&\leq 2\eps/50\leq \eps/10,
 \label{eq:ubound_G3}
\end{align}
so that
\begin{equation} 
\Pm_v(G_0)\geq 1-\eps/10. 
\end{equation}
This finishes the proof of Lemma \ref{lem:good}.~$\qed$

\section{Proof of Theorem~\ref{th:2}: the lower bound on the speed}\label{sec:lower}

We now prove the lower bound on $V(\sigma)$. 
\begin{prop}
\label{prop:2}
Suppose that $u_0$ satisfies (\ref{feb1206})
and $f$ satisfies (\ref{feb1202a}).  Then with 
probability 1,  we have  
\[
\liminf_{\sigma\to\infty}\sigma^2V(\sigma)\geq c_f\,.
\]
\end{prop}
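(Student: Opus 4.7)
The plan is to mirror the strategy of Proposition~\ref{prop:1} in reverse, using the same rescaling from Section~\ref{sec:4} to reduce the claim to $\liminf_{\sigma\to\infty}\sigma^4V^{(v)}(\sigma)\ge c_f$ for the rescaled equation~\eqref{eq:spde-v_f}. I would introduce the deterministic times $\tau_m := m T_0\sigma^8$ with $T_0 = \bT_\eps$ to be chosen large, and iteratively construct block processes
\[
v^{(0)}_t := v_t,\qquad v^{(m)}_{\tau_m}(x) := \1(x\le\ell_m),\qquad \ell_m := L(v^{(m-1)}_{\tau_m}),\quad m\ge 1,
\]
with $v^{(m)}_t$ solving~\eqref{eq:spde-v_f} for $t\ge\tau_m$. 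Since $v_{\tau_m}(x)=1$ for $x\le L(v_{\tau_m})$, induction and the comparison principle give $v^{(m)}_t\le v^{(m-1)}_t\le\cdots\le v_t$ for $t\ge\tau_m$, whence $R(v_t)\ge R(v^{(m)}_t)$ and $L(v_t)\ge L(v^{(m)}_t)$. By the Markov property and spatial translation invariance of white noise, the increments $\Delta\ell_m := \ell_{m+1}-\ell_m$ are i.i.d.\ with common law equal to that of $L(v_{T_0\sigma^8})$ when $v_0 = \1(x\le 0)$.

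The core step is to establish the uniform lower bound $\E[L(v_{T_0\sigma^8})]\ge (c_f-3\delta_\eps)T_0\sigma^4$ for all $\sigma$ sufficiently large. To this end I would reuse the functional $\Xi$ of~\eqref{eq:def-2}; an It\^o-in-space computation parallel to~\eqref{eq:SDE-Xi} gives
\[
\Xi(v_t) = \Xi(v_0) + \sigma^{-4}\int_0^t\!\!\int_\R f(v_s(x))\,dx\,ds + \int_0^t\!\!\int_\R \sqrt{v_s(1-v_s)}\,W(dx,ds).
\]
A Girsanov change of measure to $\Pm_w$ combined with Corollary~\ref{cor2} then yields the weak convergence
\[
\sigma^{-4}\big(\Xi(v_{\sigma^8 s}) - \Xi(v_0)\big) \;\Rightarrow\; B_s + c_f s,\qquad\sigma\to\infty,
\]
on $[0,T_0]$, where $B$ is a standard Brownian motion and the $c_f s$ drift arises as the Cameron--Martin shift induced by the Girsanov density $e^{B^f_s - Ds/2}$, since $\langle B,B^f\rangle_s=c_f s$. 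Sandwiching $L\le\Xi\le R$ and noting that the interface width of $v_{\sigma^8 s}$ is $o(\sigma^4)$ in probability on this time scale (via Theorem~\ref{th:mt-th1} applied to $w$, together with Girsanov) gives $\sigma^{-4}L(v_{T_0\sigma^8})\Rightarrow B_{T_0} + c_f T_0$; uniform integrability, based on the Gaussian tails of Lemma~\ref{lem:feb12-1} transferred from $u$ to $v$, then delivers $\E[\sigma^{-4}L(v_{T_0\sigma^8})]\to c_f T_0$, from which the bound follows for $\sigma$ large.

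Finally, the strong law of large numbers applied to the i.i.d.\ increments $\Delta\ell_m$ gives $\liminf_{n\to\infty}\ell_n/n\ge\E[\Delta\ell_0]\ge(c_f-3\delta_\eps)T_0\sigma^4$ almost surely. Since $R(v_{\tau_n})\ge\ell_n$ and $\tau_n=nT_0\sigma^8$, this implies $\sigma^4V^{(v)}(\sigma)\ge c_f-3\delta_\eps$ a.s., and letting $\eps\downarrow 0$ concludes. The main obstacle I anticipate is the control of the interface width: under the stationary law of $w$ this width has infinite first moment, so the $o(\sigma^4)$ bound cannot be derived from stationary estimates alone and must be combined with the transient Gaussian-type tail bounds of Lemma~\ref{lem:feb12-1} and the joint weak-convergence statement of Corollary~\ref{cor2}.
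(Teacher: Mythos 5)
Your block decomposition, the identification of the drift $c_f$ via $\Xi$, Girsanov and Corollary~\ref{cor2}, and the final appeal to the law of large numbers all match the skeleton of the paper's argument. But the core quantitative step --- upgrading the weak convergence $\sigma^{-4}L(v_{T_0\sigma^8})\Rightarrow B_{T_0}+c_fT_0$ to the expectation bound $\E[L(v_{T_0\sigma^8})]\geq (c_f-3\delta_\eps)T_0\sigma^4$ --- has a genuine gap, and the tools you cite do not close it. You need uniform integrability of the negative part of $\sigma^{-4}L(v_{T_0\sigma^8})$ \emph{as $\sigma\to\infty$}. Lemma~\ref{lem:feb12-1} cannot supply it: after the rescaling the relevant time horizon is $T=T_0\sigma^8$ and the nonlinearity is $\sigma^{-4}f$, so the Gaussian tail bound is only valid for $b\gtrsim 4\sqrt{T}(T\sigma^{-4}\|f\|_\infty\vee 1)\sim\sigma^8$, i.e.\ for $\lambda=b/\sigma^4\gtrsim\sigma^4$; it says nothing at the $O(1)$ scale where the uniform integrability must hold, and its constant $C_T$ grows polynomially in $T=T_0\sigma^8$. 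The alternative route through the sandwich $L\leq\Xi\leq R$ fares no better: one would need $\E[\Xi(v_{T_0\sigma^8})-L(v_{T_0\sigma^8})]=o(\sigma^4)$, but by the symmetry $w\leftrightarrow 1-w$ the stationary expectation of $\int_L^R w\,dx$ is half of $\E_{w,st}[R-L]=+\infty$ (Theorem~\ref{th:mt-th1}), so no quantitative $L^1$ control of $\Xi-L$ is available. You correctly flag the width as the main obstacle, but the fix you propose is exactly the combination of estimates that does not suffice. A second, smaller gap: transferring high-probability statements from $\Pm_w$ to $\Pm_v$ by Girsanov requires control of the density $e^{Z_t}$ (e.g.\ a second moment), which is not known without first truncating on the event that $\sigma^{-4}M^f_t$ stays below a level $N_\eps$.

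The paper's construction is designed precisely to bypass both problems. Instead of deterministic block times, it uses stopping times $\tau_{k+1}$ that trigger when the left edge has moved by $\tl_1\sigma^4$ in \emph{either} direction or when $\sigma^{-4}M^{f,v,k}$ reaches $N$, capped at $\tau_k+\wT_0\sigma^8$; on a timeout it resets pessimistically to $\1(x\leq L(v^{(k)}_{\tau_k})-\tl_1\sigma^4)$. This makes the increments $\Delta L_k$ bounded by $\tl_1\sigma^4$ by construction, so $\E[\Delta L_k]$ is controlled purely by the probabilities of the good and bad events (Lemma~\ref{lem:13_1}) --- no uniform integrability is ever needed --- and the built-in martingale truncation is exactly what makes the Girsanov comparison with $\Pm_w$ legitimate. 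If you want to keep your deterministic-time scheme, you would have to prove a new, scale-correct tail bound for $L(v_{T_0\sigma^8})^-$ uniformly in $\sigma$, which is not in the paper and is not obviously true.
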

The proof of Proposition~\ref{prop:2} follows a similar strategy to that
of Proposition~\ref{prop:1}. As in the proof of the upper bound,
using the comparison principle and shift invariance in law, 
we may assume without loss
of generality that $u_0(x)=v_0(x)=\1(x\le 0)$. 

\subsection{Time steps for the lower bound}
\label{subsec:3_1}

We start with the  definition of  the time steps. The main difference with 
the proof of the upper bound is that we will sometimes update ``backwards'',
 and that the "good events" will be when the stopping time happens before
a fixed time length rather than when the stopping times happen at a deterministic
time steps, as in (\ref{feb1110}) in the proof of the upper bound. 
We will define stopping times~$0=\tau_0\leq\tau_1\leq\cdots$,
and a sequence $v^{(m)}_t(x)$ of random processes, which 
will be solutions to~(\ref{eq:spde-v_f}), for
$t\geq\tau_m$, 
such that for each 
$m=0,1,2,\ldots$ the following conditions will hold almost surely:
\begin{align}
\label{eq:goal-v-comparison-a1}
v_t(x)&\geq v_t^{(m)}(x), \qquad  
 ~~\text{ for }t\geq\tau_m,\, x\in{\R}  \\
\label{eq:goal-v-comparison-c1}
v_{\tau_{m}}^{(m-1)}(x)&\geq v_{\tau_{m}}^{(m)}(x),
 \qquad \text{for }x\in{\R}. 
\end{align}
Given (\ref{eq:goal-v-comparison-a1}) and (\ref{eq:goal-v-comparison-c1}), 
it would follow almost surely for all $m=0,1,2,\ldots$ and for all
$t\geq\tau_m$, that
\begin{equation}
\label{eq:R-comparison1}
L(v_t)\geq L(v^{(m)}_t).
\end{equation}
Thus, to bound $L(v_t)$ from below, it would suffice to bound $L(v_t^{(m)})$.  

We now describe the induction, starting with $\tau_0=0$, and   
$v_t^{(0)}(x)=v_t(x)$, so that  \eqref{eq:goal-v-comparison-a1} 
holds for~$m=0$.  Also define $v^{(-1)}_t(x)=1$, so that 
\eqref{eq:goal-v-comparison-c1} holds.  
Let us fix some constants $\tl_1, \wT_0, N>0$, to be specified later.  
Suppose that we have defined $\tau_m$ for $0\leq m\leq k$ and  
$v^{(m)}_t$ for $t\geq\tau_m$ and $0\leq m\leq k$, and  
that (\ref{eq:goal-v-comparison-a1}) and (\ref{eq:goal-v-comparison-c1}) 
hold for $0\leq m\leq k$.
To define $\tau_{k+1}$, we consider, as in (\ref{feb1104}), 
\begin{align}\label{feb1108}
M^{f,v,k}_t:&= \int_{\tau_k}^t 
\int_{{\R}}\frac{f(v^{(k)}_s(x))}{\sqrt{ v^{(k)}_s(x)(1-v^{(k)}_s(x))}}W(dx,ds),
\end{align}
and set
\begin{align}\label{feb1106}
\tau_{k+1}&=\inf\Big\{t\in[\tau_k,\tau_k+\wT_0\sigma^8]:
  |L\big(v_t^{(k)}\big)-L\big(v_{\tau_k}^{(k)}\big)| \geq \lambda_1\sigma^4 \;
\text{ or }\;\frac{1}{\sigma^4}M^{f,v,k}_t\geq N\Big\}
\end{align}
with the convention $\tau_{k+1}=\tau_k+\wT_0\sigma^8$ if the above set is empty. 

We then let $v^{(k+1)}_t(x)$ for $t\geq\tau_{k+1}$, $x\in{\R}$ be the 
solution to (\ref{eq:spde-v_f}) with  the initial condition
\[
v^{(k+1)}_{\tau_{k+1}}(x)=
\left\{
\begin{array}{lcr}
\1(x\leq  L(v^{(k)}_{\tau_{k+1}})),\;&& \text{if}\; 
\tau_{k+1}<\tau_{k}+\wT_0\sigma^8,\\
\1(x\leq  L(v^{(k)}_{\tau_{k}})-\lambda_1\sigma^4),\;&& 
\text{if}\; \tau_{k+1}=\tau_{k}+\wT_0\sigma^8.
\end{array}\right.
\]
Then for $m=k+1$, the comparison principle gives 
\eqref{eq:goal-v-comparison-a1}, and \eqref{eq:goal-v-comparison-c1} is 
true by definition.  

As before, we write
\[
\Delta\tau_k=\tau_{k+1}-\tau_k
\]
and
\begin{equation}
\Delta L_k = L\big(v_{\tau_{k+1}}^{(k)}\big)-L\big(v_{\tau_k}^{(k)}\big) 
\end{equation}
Note that $\{(\Delta\tau_m, \Delta L_k)\}$ are i.i.d. random variables.

\subsection{A good event and its consequences for lower bound}
We define the "good" events
\[
\widetilde G_0^{(m)}=\{\Delta\tau_m < \wT_0\sigma^8\}.
\]
and 
\begin{align*}
\widetilde G_0^{(1,m)}&=\Big\{\Delta\tau_m < \wT_0\sigma^8,~ 
\Delta L_k=\tl_1 \sigma^4, 
\sup_{\tau_m\le t\le \tau_{m+1}}\frac{1}{\sigma^4}M^{f,v,k}_t < N \Big \},\\
\widetilde G_0^{(2,m)}&=\Big\{\Delta\tau_m < \wT_0\sigma^8, ~
\Delta L_k=-\tl_1  \sigma^4, 
\sup_{\tau_m\le t\le \tau_{m+1}}\frac{1}{\sigma^4}M^{f,v,k}_t < N\Big\},\\
\widetilde G_0^{(3,m)}&=\Big\{\Delta\tau_m < \wT_0\sigma^8, 
\sup_{\tau_m\le t\le \tau_{m+1}}\frac{1}{\sigma^4}M^{f,v,k}_t = N\Big\}.
\end{align*}
To get  a lower  bound on 
$V^{(v)}(\sigma)$ we need a lower bound on $\Delta L_m$ as $m\to\infty$. 
To this end the following lemma will be helpful. 
\begin{lemma}
\label{lem:13_1}
Let $\eps\in(0,1/10)$ be arbitrary and $\delta_\eps=\eps/10$. 
There exist $T^\ast_\eps$, $N_\eps$ and $\sigma_\eps$
so that for all~$\sigma\geq\sigma_\eps$,  $m\geq 0$, $\wT_0=T^\ast_\eps, N=N_\eps$   and 
\begin{equation}
\label{9_1_11a}
\tl_1=(c_f-\delta_\eps)T_\eps^\ast, 
\end{equation}
we have
\begin{align}
\label{eq:bound-G-0_a}
\Pm_v\big(\widetilde  G_0^{(1,m)}\big)&\geq 1-\eps/50,\\
\label{eq:bound-G-0_a1}
\Pm_v\big(\widetilde  G_0^{(2,m)}\big)&\leq \eps/20,\\
\label{eq:bound-G-0_a3}
\Pm_v\big(\widetilde  G_0^{(3,m)}\big)&\leq \eps/50. 
\end{align}
\end{lemma}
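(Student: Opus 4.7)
The plan is to mimic the Girsanov-based argument used to prove Lemma~\ref{lem:good}. Since the triples $(\Delta\tau_m, \Delta L_m, \sup_{\tau_m\le t\le\tau_{m+1}} M^{f,v,m}_t)$ are i.i.d.\ in $m$, it suffices to treat $m=0$. For each of the three events, Girsanov's theorem allows us to rewrite
\[
\Pm_v(\widetilde G_0^{(j,0)}) = \E_w\Big[\exp\big(M^{f,\sigma^4}_{T^\ast_\eps\wedge \xi} - \tfrac12 A^{f,\sigma^4}_{T^\ast_\eps\wedge \xi}\big)\,\mathbf{1}_{\widetilde G_0^{(j,0)}}\Big],
\]
where on the canonical path space the event is expressed through the rescaled quantities $L^{\sigma^4}$, $M^{f,\sigma^4}$, and $A^{f,\sigma^4}$, and $\xi$ denotes the first time the canonical process either reaches the front boundary $\pm\tl_1/\sigma^4$ or the martingale level $N_\eps$. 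By Corollary~\ref{cor2}, $(L^{\sigma^4}, M^{f,\sigma^4}, A^{f,\sigma^4})\Rightarrow (B, B^f, D\,\cdot)$ in the Skorokhod topology with $\langle B, B^f\rangle_t = c_f t$, and under the limiting exponential tilt $\exp(B^f_t - \tfrac12 Dt)$ the Brownian motion $B$ acquires drift $c_f$.

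For \eqref{eq:bound-G-0_a3}, $\widetilde G_0^{(3,0)}$ is contained in $\{\sup_{t\le T^\ast_\eps} M^{f,\sigma^4}_t \ge N_\eps\}$; by the reasoning used for the term $I^\eps_1$ in the proof of Lemma~\ref{lem:good}, its probability is bounded in the limit by $2e^{N_\eps}e^{-N_\eps^2/(2DT^\ast_\eps)}$, which is $\le \eps/50$ once $N_\eps$ is chosen large relative to $T^\ast_\eps D$. For \eqref{eq:bound-G-0_a1}, $\widetilde G_0^{(2,0)}$ asks that $L^{\sigma^4}$ exit $[-(c_f-\delta_\eps)T^\ast_\eps,\,(c_f-\delta_\eps)T^\ast_\eps]$ through the left endpoint in time $\le T^\ast_\eps$; after the tilt, this is asymptotically the probability that a Brownian motion with drift $c_f$ hits the negative boundary before the positive one, which by the classical exit formula is of order $\exp(-2c_f(c_f-\delta_\eps)T^\ast_\eps)$ and hence $\le \eps/20$ for $T^\ast_\eps$ large. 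For \eqref{eq:bound-G-0_a}, the three events $\widetilde G_0^{(j,0)}$ partition $\{\Delta\tau_0 < \wT_0 \sigma^8\}$ up to null sets, so it suffices to bound the probability of the remainder $\{\Delta\tau_0 = \wT_0\sigma^8\}$; this event corresponds to $B_t + c_f t$ remaining in $[-(c_f-\delta_\eps)T^\ast_\eps, (c_f-\delta_\eps)T^\ast_\eps]$ up to time $T^\ast_\eps$, forcing $B_{T^\ast_\eps}\le -\delta_\eps T^\ast_\eps$ at the terminal time, which has probability at most $2e^{-\delta_\eps^2 T^\ast_\eps/2}$ by the same Gaussian estimate \eqref{9_12_2} used in Lemma~\ref{lem:good}, and so can be made $\le \eps/100$ for $T^\ast_\eps$ large.

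The main obstacle, as in Lemma~\ref{lem:good}, is the rigorous passage to the limit in the Girsanov identity when the events are defined through first-passage times of continuous processes. One must verify that the relevant hitting-time functionals are almost surely continuous under the limit law (the limit Brownian motion does not dwell on the boundaries or on the level $N_\eps$), so that Skorokhod convergence transfers into convergence of the probabilities. This is handled exactly as the term $I^\eps_2$ in the proof of Lemma~\ref{lem:good} by combining the Skorokhod representation theorem with dominated convergence.
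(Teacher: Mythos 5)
Your overall strategy --- reduce to $m=0$ by the i.i.d.\ structure, rewrite each probability via Girsanov as an expectation under $\Pm_w$, pass to the limit $\sigma\to\infty$ using Corollary~\ref{cor2}, and then compute with the tilted (drift-$c_f$) Brownian motion --- is exactly the paper's. Your treatments of \eqref{eq:bound-G-0_a1} and \eqref{eq:bound-G-0_a3} are sound: for $\widetilde G_0^{(3,0)}$ the paper uses the same containment in $\{\sup_{t\le T^\ast_\eps}M^{f,\sigma^4}_t\ge N_\eps\}$ and the same $e^{N_\eps}e^{-N_\eps^2/(2DT^\ast_\eps)}$ bound, and for $\widetilde G_0^{(2,0)}$ the paper simply bounds the exit-through-the-bottom probability by $\Pm^B\big(\inf_{t\le T^\ast_\eps}B_t\le -(c_f-\delta_\eps)T^\ast_\eps\big)$ after discarding the positive drift, rather than invoking the two-sided exit formula; both routes give a bound that can be made small by taking $T^\ast_\eps$ large.

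The gap is in \eqref{eq:bound-G-0_a}. You obtain the lower bound on $\Pm_v(\widetilde G_0^{(1,0)})$ by complementation, i.e.\ from $\Pm_v(\widetilde G_0^{(1,0)})=1-\Pm_v(\Delta\tau_0=\wT_0\sigma^8)-\Pm_v(\widetilde G_0^{(2,0)})-\Pm_v(\widetilde G_0^{(3,0)})$. Feeding in your own bounds $\eps/100$, $\eps/20$ and $\eps/50$ yields only $1-2\eps/25$, not the claimed $1-\eps/50$: "it suffices to bound the remainder" is false as stated, because the masses of $\widetilde G_0^{(2,0)}$ and $\widetilde G_0^{(3,0)}$ also subtract. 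This is repairable --- your estimates for those two events decay in $T^\ast_\eps$ and $N_\eps$, so you could force each below $\eps/200$ --- but as written the arithmetic does not close. The paper instead proves \eqref{eq:bound-G-0_a} directly: after Girsanov and the weak limit, $\Pm_v(\widetilde G_0^{(1,0)})$ is bounded below by $\Pm^B\big(\sup_{t\le T^\ast_\eps}(B_t+c_ft)\ge(c_f-\delta_\eps)T^\ast_\eps,\ \inf_{t\le T^\ast_\eps}(B_t+c_ft)>-(c_f-\delta_\eps)T^\ast_\eps\big)$ minus the martingale-overflow term; the first probability is $\ge 1-\eps/100$ by the two-sided estimate \eqref{8_1_2} (which is precisely how $T^\ast_\eps$ is chosen in the paper), and the second is $\le\eps/100$, giving $1-\eps/50$ exactly. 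Note also that your terminal-value estimate on the remainder only controls the event "the drifted path stays inside the strip"; for the direct lower bound one needs the joint event "reaches the upper level while avoiding the lower one", which is what \eqref{8_1_2} supplies.
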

We postpone the proof of this lemma and first give
\begin{proof}[Proof of Proposition~\ref{prop:2}]
Let us take $\eps\in(0,\min(10^{-1}, c^{-2}_f))$, 
and 
choose $T_\eps^\ast$, $N_\eps$ and $\sigma_\eps$ 
as in Lemma~\ref{lem:13_1}, and  consider an arbitrary $\sigma\ge\sigma_\eps$. 
Lemma~\ref{lem:13_1} implies that 
\begin{equation}
\label{16_12_6}
\Pm_v(\widetilde  G_0^{(1,m)}) -\Pm_v(\widetilde  G_0^{(2,m)})
-\Pm_v(\widetilde  G_0^{(3,m)}) - \big(1-\Pm_v(\widetilde  G_0^{(m)})\big)\geq 
1-\eps/5, 
\end{equation}
for all $\sigma\ge \sigma_\eps$, so that for all $m\ge 0$ we have 
\begin{align}
\nonumber
\E_v[\Delta L_m]&\geq \tl_1\sigma^4 \Big(\Pm_v(\widetilde  G_0^{(1,m)})- 
\Pm_v(\widetilde  G_0^{(2,m)})- P_v(\widetilde  G_0^{(3,m)})- 
\big(1-P_v(\widetilde G_0^{(m)})\big)\Big)\\
&\geq \tl_1	\sigma^4 (1-\eps/5). 
\end{align}
Then using the strong law of large numbers, we have that $\Pm_v$ 
almost surely,
\[
\lim_{m\to\infty}\frac{L(v^{(m)}_{\tau_m})}{m}\geq \tl_1 (1-\eps/5)\sigma^4,
\]
for all $\sigma\ge \sigma_\eps$.  
Since $\tau_m\leq m T_\eps^\ast\sigma^8$, we have that $\Pm_v$ almost 
surely, 
\begin{equation}
\label{eq:G-0-conclusion_1}
\liminf_{m\to\infty}\frac{L(v^{(m)}_{\tau_m})}{\tau_m}
=\liminf_{m\to\infty}\frac{L(v^{(m)}_{\tau_m})}{m}\frac{m}{\tau_m}
\geq \frac{\tl_1(1-\eps/5)\sigma^4}{T_\eps^\ast\sigma^8}
= \frac{\tl_1(1-\eps/5)}{T_\eps^\ast}\sigma^{-4}.
\end{equation}
Furthermore, since  for $\tau_m\leq t\leq\tau_{m+1}$ we have
\begin{align*}
L\big(v^{(m)}_t\big)&\geq L\big(v^{(m)}_{\tau_m}\big) -\tl_1\sigma^4,
\end{align*}
and $\Delta\tau_m\leq T_\eps^\ast\sigma^8$, 
it follows that, $P_v$ almost surely and since $\varepsilon<10^{-1}$, 
\begin{align}
\label{eq:to-verify-for-velocity_1}
V^{(v)}(\sigma)&\geq \liminf_{m\to\infty}\inf_{\tau_m\leq t\leq\tau_{m+1}}
\frac{L\big(v^{(m)}_{t}\big)}{t}
\geq \liminf_{m\to\infty}
\frac{L\big(v^{(m)}_{\tau_m}\big)-\tl_1\sigma^4}{\tau_m+T_\eps^\ast\sigma^8}
\nonumber
\geq  \frac{\tl_1(1-\eps/5)}{T_\eps^\ast}\sigma^{-4}
\\
&=  \frac{(c_f-\eps/10)T_\eps^*(1-\eps/5)}{T_\eps^\ast}\sigma^{-4}
\nonumber
\geq \big(c_f-\frac{\eps}{5}(c_f+1)\big)\sigma^{-4}\\
\nonumber
&\geq   (c_f-\sqrt\eps)\sigma^{-4}. 
\end{align}
In the second inequality above we used~\eqref{eq:G-0-conclusion_1} 
and the fact that \mbox{$\tau_m\rightarrow\infty$}, $\Pm_v$-a.s. since $\Delta \tau_m\geq 0$, 
not identically zero and i.i.d. 
Since $\eps$ was chosen to be arbitrary small we are done.  
\end{proof}

\subsection{Proof of Lemma~\ref{lem:13_1}}

As
$(\Delta_{\tau_m}, \Delta L_m)$ are i.i.d., the events $G_0^{(m)}$ are also
i.i.d., hence we only need to prove~\eqref{eq:bound-G-0_a}-\eqref{eq:bound-G-0_a3} 
for~$m=0$ and write
\[
\widetilde  G_0=\widetilde  G_0^{(0)}, \widetilde  G^{(i)} _0= \widetilde G_0^{(i, 0)}, i=1,2,3. 
\]
Fix $\eps\in (0,10^{-1})$, let  $\delta_\eps=\eps/10$, and 
let $T_\eps^\ast$ be sufficiently large so that 
\begin{align} 
\label{8_1_2}
\Pm^B\Big(B_{1}\geq  -\delta_\eps \sqrt{T},~ \inf_{0\le t\leq 1} B_t>-(c_f-\delta_\eps)\sqrt{T}\Big)
\geq 1-\eps/100,\;\; \forall T\geq T_\eps^\ast\,.
\end{align} 
We consider $N_\eps> (2+\delta_\eps)T_\eps^\ast D$ sufficiently large, with a precise
value to be specified later, and define the stopping time 
\[
\xi^{\eps}=\inf\{t\geq 0: M^{f,\sigma^4}_t \geq N_\eps\}.  
\]
Then by Girsanov's theorem, 
and since
\[
\Big\{ \sup_{0\le t\leq\Delta\tau_0}\frac{1}{\sigma^4}M^{f}_t < N_\eps \Big\}  
\supset \Big\{\sup_{0\le t\leq T_\eps^\ast\sigma^8}\frac{1}{\sigma^4}M^{f}_t < N_\eps\Big\}
= \{\xi^{\eps}>T_\eps^\ast\},
\]
we have 
\begin{align}
\nonumber
\Pm_v&\big(\wG^{(1)}_0\big)=\E_w\left[\exp\left(
Z_{\sigma^8(T_\eps^\ast\wedge \xi^{\eps})}\right)\1_{\wG^{(1)}_0}\right]    
\geq E_w\left[\exp\left(\sigma^{-4}(M^f_{\sigma^8(T_\eps^\ast\wedge 
\xi^{\eps})}-\frac{1}{2}\sigma^{-4} 
  A^f_{\sigma^8(T_\eps^\ast\wedge \xi^{\eps})})\right) \right. \\
\nonumber&\hspace{1.5cm} \times\1\left(L(w(t))\geq \tl_1 \sigma^4 
\;\text{for some}\; 0\le t\leq \sigma^8(T_\eps^\ast\wedge \xi^{\eps})\right)\\
\nonumber&\hspace{1.5cm}  \times\1\left(L(w(t)) > -\tl_1 \sigma^4 \;\text{for all}\; 
0\le t\leq \sigma^8(T_\eps^\ast\wedge \xi^{\eps})\right)\\
\label{feb1116}
&\hspace{1.5cm} 
\left. 
\times\1\left(
\frac{1}{\sigma^4}M^{f}_t < N_\eps   \;\text{for all}\; 0\le t\leq 
\sigma^8 (T_\eps^\ast\wedge \xi^{\eps})
\right)\right]   \\
\nonumber&\hspace{1.1cm}
\geq \E_w\left[\exp\left(M^{f,\sigma^4}_{T_\eps^\ast}- \frac{1}{2} A^{f,\sigma^4}_{T_\eps^\ast}\right)
\1\left(L^{\sigma^4}_t>\tl_1  \;\text{for some}\; 0\le t\leq T_\eps^\ast
 \right)\right.  \\
\nonumber&\hspace{1.5cm}
 \times\1\left(L^{\sigma^4}_t>-\tl_1  \;\text{for all}\; 0\le t\leq T_\eps^\ast
 \right)  
\times\1\Big(\sup_{0\leq t\leq T_\eps^\ast} M^{f,\sigma^4}<N_\eps \Big)\Big] 
=: J^{\eps}_1\,.
\end{align}
Next, passing to the limit $\sigma\to+\infty$, we obtain, using the weak
convergence in Corollary~\ref{cor2}:
\begin{align}
\nonumber
\liminf _{\sigma\rightarrow \infty}& J^{\eps}_1
= \liminf _{\sigma\rightarrow \infty} \E_w\left[\exp\left(
M^{f,\sigma^4}_{T_\eps^\ast}- \frac{1}{2} A^{f,\sigma^4}_{T_\eps^\ast}\right)\right.
\times \1\left(L^{\sigma^4}_t> (c_f-\delta_\eps)T_\eps^\ast \;
\text{for some}\; 0\le t\leq T_\eps^\ast
 \right)\nonumber \\
\nonumber 
&\hspace{2.5cm}
 \times\1\left(L^{\sigma^4}_t>-(c_f-\delta_\eps)T_\eps^*  
 \;\text{for all}\; 0\le t\leq T_\eps^\ast
 \right)  
 \times\1\Big(\sup_{0\le t\leq T_\eps^\ast} 
 M^{f,\sigma^4}<N_\eps \Big)\Big]
\\
\nonumber
&\geq \E^{B^f,B}\Big[e^{B^f_{T_\eps^\ast}- \frac{1}{2}DT_\eps^\ast}
 \1\Big(\sup_{0\le t\leq T_\eps^\ast}
 B_t>(c_f-\delta_\eps)T_\eps^\ast\Big) 
\1\Big(\inf_{0\le t\leq T_\eps^\ast} B_t>-(c_f-\delta_\eps)T_\eps^\ast\Big) 
\\
 \label{9_1_8}
&\hspace{1.5cm}
\times 
\1\Big(\sup_{0\le t\leq T_\eps^\ast} B^f_t< N_\eps\Big)\Big] .
\end{align}
We rewrite this, using Girsanov's theorem for correlated Brownian motions with a drift, as
\begin{align}
  \liminf _{\sigma\rightarrow \infty} J^{\eps}_1
&\geq \E^{B^f,B}\Big[e^{B^f_{T_\eps^\ast}- \frac{1}{2}DT_\eps^\ast}
\1\Big(\sup_{0\le t\leq T_\eps^\ast} B_t>(c_f-\delta_\eps)T_\eps^\ast\Big)
 \times\1\Big(\inf_{0\le t\leq T_\eps^\ast} B_t>-(c_f-\delta_\eps)T_\eps^\ast\Big) \Big] 
  \nonumber\\
\nonumber &\quad - \E^{B^f,B}\Big[e^{B^f_{T_\eps^\ast}-  \frac{1}{2}DT_\eps^\ast}
\1\Big(\sup_{0\le t\leq T_\eps^\ast} B^f_t \geq N_\eps\Big)\Big] 
  \\
\nonumber
&= \Pm^B\Big(\sup_{0\le t\leq T_\eps^\ast}(B_t+c_ft)  \geq  (c_f-\delta_\eps)T_\eps^\ast, 
\inf_{0\le t\leq T_\eps^\ast} (B_t+c_ft)>-(c_f-\delta_\eps)T_\eps^\ast   \Big)
\\
\label{feb1114}
&\quad - \Pm^{B^f}\Big(\sup_{0\le t\leq T_\eps^\ast} B^f_t +Dt \geq N_\eps\Big).
\end{align} 
The first term in the right side can be bounded as
\begin{align}
\nonumber
\Pm^B&\Big(\sup_{0\le t\leq T_\eps^\ast}(B_t+c_ft)  
\geq  (c_f-\delta_\eps)T_\eps^\ast, \inf_{0\le t\leq T_\eps^\ast} (B_t+c_ft)>
-(c_f-\delta_\eps)T_\eps^\ast\Big)\\
\nonumber
&\geq \Pm^B\Big(B_{T_\eps^\ast}+c_fT_\eps^\ast  
\geq  (c_f-\delta_\eps)T_\eps^\ast, \inf_{0\le t\leq T_\eps^\ast} 
B_t 
>-(c_f-\delta_\eps)T_\eps^\ast\Big)
\\
\nonumber
&= \Pm^B\Big(B_{T_\eps^\ast}\geq  -\delta_\eps T_\eps^\ast, 
\inf_{0\le t\leq T_\eps^\ast} B_t>-(c_f-\delta_\eps)T_\eps^\ast\Big)
\\
\label{9_1_3}&
= \Pm^B\Big(B_{1}\geq  -\delta_\eps \sqrt{T_\eps^\ast},
\inf_{0\le t\leq 1} B_t>-(c_f-\delta_\eps)\sqrt{T_\eps^\ast}\Big)
\geq 1- \eps/100,
\end{align}
where the last inequality follows by~\eqref{8_1_2}. 
The second term in the right side of (\ref{feb1114}) 
can be bounded  using the reflection principle for Brownian motion, 
bounds on tails of Gaussian probabilities and by choosing  
$N_\eps\geq (2+\delta_\eps)T_\eps^\ast D$ sufficiently large, so that
\begin{align}
\nonumber
\Pm^{B^f}\Big(&\sup_{0\le t\leq T_\eps^\ast} B^f_t +Dt 
\geq N_\eps\Big)\leq \Pm^{B^f}\Big(\sup_{0\le t\leq T_\eps^\ast} 
B^f_t  \geq N_\eps-DT_\eps^\ast\Big)\\
& \label{9_1_2}
\leq 2\Pm^B\Big( \sqrt{D} B_{T_\eps^\ast} \geq N_\eps-DT_\eps^\ast\Big) 
\leq 2 
\exp\Big(-\frac{(N_\eps-DT_\eps^\ast)^2}{2T_\eps^\ast D}
\Big) 
\leq \eps/100.
\end{align}
Combining~\eqref{feb1116}-\eqref{9_1_2} we get that for $N_\eps$ sufficiently large
we have 
\begin{align}
\label{9_1_4}
\Pm_v&\big(\wG^{(1)}_0\big)\geq 1-\eps/50,
\end{align} 
which is \eqref{eq:bound-G-0_a}.

Next, we bound $\Pm_v(\wG_0^{(2)})$.
Again, using Girsanov's theorem we write 
\begin{align*}
\nonumber
\Pm_v&(\wG^{(2)}_0)=\E_w\left[\exp\left(
Z_{\sigma^8(T_\eps^\ast\wedge \xi^{\eps})}\right)\1_{\wG^{(2)}_0}\right]
\leq \E_w\Big[\exp\left(\sigma^{-4}(M^f_{\sigma^8(T_\eps^\ast\wedge \xi^{\eps})}
-\frac{1}{2}\sigma^{-4} 
A^f_{\sigma^8(T_\eps^\ast\wedge \xi^{\eps})}\right) \\
&\times\1
\left(L(w_t) \leq -\tl_1 \sigma^4 \;\text{for some}\; 
0\le t\leq \sigma^8(T_\eps^\ast\wedge \xi^{\eps})\right)\\
&
\times\1\Big(
\frac{1}{\sigma^4}M^{f}_t < N_\eps   \;\text{for all}\; 0\le t\leq \sigma^8T_\eps^\ast
\Big)\Big]   \\
\nonumber 
&
+ \E_w\Big[\exp\left(
\sigma^{-4}(M^f_{\sigma^8(T_\eps^\ast\wedge \xi^{\eps})}-\frac{1}{2}\sigma^{-4} 
  A^f_{\sigma^8(T_\eps^\ast\wedge \xi^{\eps})})\right)  
\times\1\Big(\sup_{0\le t\leq \sigma^8T_\eps^\ast}
\frac{1}{\sigma^4}M^{f}_t \geq  N_\eps 
\Big)   \Big]\\
&\hspace{-0.3cm}
\leq \E_w\Big[\exp\left(M^{f,\sigma^4}_{T_\eps^\ast}- \frac{1}{2}A^{f,\sigma^4}_{T_\eps^\ast}\right)
 \times\1\Big(\inf_{0\le t\leq T_\eps^\ast} L^{\sigma^4}_t\leq -\tl_1 
 \Big) 
 \times\1\Big(\sup_{0\le t\leq T_\eps^\ast} M^{f,\sigma^4}<N_\eps \Big)\Big] \\
 \nonumber 
&
+ \E_w\Big[\exp\Big(\sigma^{-4}
(M^f_{\sigma^8(T_\eps^\ast\wedge \xi^{\eps})}-\frac{1}{2}\sigma^{-4} 
  A^f_{\sigma^8(T_\eps^\ast\wedge \xi^{\eps})}\Big)  
\times\1\Big(\sup_{0\le t\leq \sigma^8T_\eps^\ast}
\frac{1}{\sigma^4}M^{f}_t \geq  N_\eps 
\Big)   \Big]
=: J^{\eps}_{2,1}+ J^{\eps}_{2,2}\,.
\end{align*}
The term $J_{2,2}^\eps$ is exactly as $I_1^\eps$ in~\eqref{9_1_5}, 
thus, as in~\eqref{9_1_6} we have, by choosing $N_\eps$ sufficiently large:
\begin{align}
\label{9_1_7}
 J^{\eps}_{2,2}&\leq  \eps/50, 
\end{align}
for all $\sigma$ sufficiently large. As for $J^{\eps}_{2,1}$,  
proceeding similarly to~\eqref{9_1_8},   we obtain
\begin{align}
\nonumber
\limsup _{\sigma\rightarrow \infty} J^{\eps}_{2,1} 
&\leq \E^{B^f,B}\Big[e^{B^f_{T_\eps^\ast}- \frac{1}{2}DT_\eps^\ast}
\1\Big(\inf_{0\le t\leq T_\eps^\ast} B_t\leq -(c_f-\delta_\eps)T_\eps^\ast\Big)  
\times \1\Big(\sup_{0\le t\leq T_\eps^\ast} B^f_t\leq  N_\eps\Big)\Big] 
  \nonumber\\
\nonumber
&\leq \Pm^B\Big( \inf_{0\le t\leq T_\eps^*} (B_t+c_ft)\leq 
-(c_f-\delta_\eps)T_\eps^\ast\Big)
\leq \Pm^B\Big( \inf_{0\le t\leq 1} B_t\leq -(c_f-\delta_\eps)\sqrt{T_\eps^\ast}\Big)\\
\label{9_1_9}
&\leq \eps/100.
\end{align}
Here, the last inequality follows from \eqref{8_1_2}. 
Combining~\eqref{9_1_7} and \eqref{9_1_9} we see 
that for $N_\eps$ sufficiently large we have
\begin{align}
\label{9_1_4a}
\liminf _{\sigma\rightarrow \infty}& J^{\eps}_2  \leq 3\eps/100,
\end{align} 
and \eqref{eq:bound-G-0_a1} follows.

To bound $\wG_0^{(3)}$,
once again by Girsanov's theorem and recalling the definition of $J^{\eps}_{2,2}$, we 
obtain
\begin{align*}
\nonumber
\Pm_v&(\wG^{(3)}_0)=\E_w\Big[\exp
\left(Z_{\sigma^8(T_\eps^\ast\wedge \xi^{\eps})}\right)
\1_{\wG^{(3)}_0}\Big]
\leq \E_w\left[\exp\left(\sigma^{-4}(M^f_{\sigma^8(T_\eps^\ast\wedge \xi^{\eps})}
-\frac{1}{2}\sigma^{-4} 
  A^f_{\sigma^8(T_\eps^\ast\wedge \xi^{\eps})})\right) \right. \\
&\hspace{1.5cm} \times\1\Big(\sup_{0\le t\leq T_\eps^\ast} 
M^{f,\sigma^4}\geq N_\eps \Big)\Big]   
= J^{\eps}_{2,2}
\leq \eps/50,
\end{align*}
where the last inequality follows from~\eqref{9_1_7} for $N_\eps$ sufficiently 
large and all $\sigma$ sufficiently large. 
Thus  \eqref{eq:bound-G-0_a3} follows, and the proof of Lemma~\ref{lem:13_1} is complete.
$\qed$


\def\cprime{$'$} \def\cprime{$'$} \def\cprime{$'$}
\providecommand{\bysame}{\leavevmode\hbox to3em{\hrulefill}\thinspace}
\providecommand{\MR}{\relax\ifhmode\unskip\space\fi MR }
\providecommand{\MRhref}[2]{%
  \href{http://www.ams.org/mathscinet-getitem?mr=#1}{#2}
}
\providecommand{\href}[2]{#2}

\end{document}